\newtheorem{theorem}{Theorem}[section]
\newtheorem{proposition}[theorem]{Proposition}
\newtheorem{lemma}[theorem]{Lemma}
\newtheorem{corollary}[theorem]{Corollary}
\theoremstyle{definition}
\newtheorem{example}[theorem]{Example}
\newtheorem{definition}[theorem]{Definition}
\newtheorem{remark}[theorem]{Remark}
\newtheorem{conjecture}[theorem]{Conjecture}
\newcommand{\qihao}{\fontsize{5.0pt}{\baselineskip}\selectfont}
\begin{document}

\author[J. Cui]{Jian Cui}
\address{School of Mathematics and Statistics, Anhui Normal University \\ Wuhu, Anhui 241002, China}
\email{cui368@ahnu.edu.cn}
\author[P. Danchev]{Peter Danchev}
\address{Institute of Mathematics and Informatics, Bulgarian Academy of Sciences \\ "Acad. G. Bonchev" str., bl. 8, 1113 Sofia, Bulgaria}
\email{danchev@math.bas.bg; pvdanchev@yahoo.com} \vspace{0.5cm}
\author[Y. Zeng]{Yuedi Zeng}
\address{Key Laboratory of
Financial Mathematics (Putian University), Fujian Province
University, Putian, Fujian, China} 
\email{155221109@qq.com}

\title[Some New Results on Pseudo $n$-Strong Drazin Inverses ...] {Some New Results on Pseudo $n$-Strong \\ Drazin Inverses in Rings}
\keywords{Drazin inverse; pseudo $n$-strong Drazin inverse; pseudo Drazin inverse; $pns$-$*$-Drazin inverse}
\subjclass[2020]{16U50; 16W10}

\maketitle

\begin{abstract} In this paper, we
give a further study in-depth of the pseudo $n$-strong Drazin
inverses in an associative unital ring $R$. The characterizations of
elements $a,b\in R$ for which $aa^{\tiny{\textcircled{\qihao
D}}}=bb^{\tiny{\textcircled{\qihao D}}}$ are provided, and some new
equivalent conditions on pseudo $n$-strong Drazin inverses are
obtained. In particular, we show that an element $a\in R$ is pseudo
$n$-strong Drazin invertible if, and only if, $a$ is $p$-Drazin
invertible and $a-a^{n+1}\in \sqrt{J(R)}$ if, and only if, there
exists $e^2=e\in {\rm comm}^2(a)$ such that $ae\in \sqrt{J(R)}$ and
$1-(a+e)^n\in \sqrt{J(R)}$. We also consider pseudo $n$-strong
Drazin inverses with involution, and discuss the extended versions
of Cline's formula and Jacobson's lemma of this new class of
generalized inverses. Likewise, we define and explore the so-called
{\it pseudo $\pi$-polar} rings and demonstrate their relationships
with periodic rings and strongly $\pi$-regular rings, respectively.
\end{abstract}

\bigskip

\section{\bf Introduction and Motivation}

Let $R$ be an arbitrary associative ring with unity. The symbols $U(R)$, $J(R)$ and $R^{\rm nil}$ stand,  respectively, for the set of all invertible elements, the Jacobson radical and the set of all nilpotent elements of $R$. We use ${\rm M}_n(R)$ to denote the $n\times n$ matrix ring over $R$. For any $a\in R$, the commutant and double commutant of $a$ are defined by ${\rm comm}(a)=\{x\in R:ax=xa\}$ and ${\rm comm}^{2}(a)$$=\{x\in R:xy=yx,{\rm ~for~all}~y\in {\rm comm}(a)\}$, respectively.

Recall that an element $a\in R$ is said to be \emph{Drazin
invertible} if there exists $x\in R$ such that $$xax=x,~  ~x\in {\rm
comm}^2(a),~ ~a-a^{2}x\in R^{\rm nil}. \eqno{(\ast})$$ Herein, the
element $x$ is called a \emph{Drazin inverse} of $a$, which is
unique if it exists. It is well known that the condition ``$x\in
{\rm comm}^2(a)$" in $(\ast)$ can be replaced with ``$x\in {\rm
comm}(a)$". The notion of Drazin inverses was initially introduced
by Drazin \cite{Drazin} in 1958, which plays important roles in
matrix theory, Markov chains, partial differential equations, etc.
(see \cite{ben,Cam,Na}). Moreover, Drazin inverses are closely
related to the well-known strongly $\pi$-regular rings in ring
theory. In fact, a ring $R$ is \emph{strongly $\pi$-regular} (see
\cite{Tu}) if, for each $a\in R$, the relation $a^n\in a^{n+1}R\cap
Ra^{n+1}$ holds for some positive integer $n$ depending on $a$
(which ring is also called \emph{polar} in the existing literature).
It is well known that a ring $R$ is strongly $\pi$-regular precisely
when every element of $R$ is Drazin invertible.

For a ring $R,$ let us denote $\sqrt{J(R)}=\{a\in R:a^{k}\in
J(R)~{\rm for~some}~k\geq1\}.$ It is easy to see that both $R^{\rm
nil}$ and $J(R)$ are contained in $\sqrt{J(R)}$.

In 2012, Wang and Chen \cite{{Wang12}} introduced the notion of the pseudo Drazin inverse in rings. Indeed, an element $x\in R$ is called \emph{pseudo Drazin invertible} (or, for short, \emph{$p$-Drazin invertible}) if there exists $x\in R$ such that
\begin{center}
$xax=x$,~~$x\in {\rm comm}^{2}(a)$,~~$a-a^{2}x\in \sqrt{J(R)}$.~
\end{center}
Any element $x\in R$ satisfying the above conditions is also called a \emph{pseudo Drazin inverse} (or, shortly, \emph{$p$-Drazin inverse}) of $a$; notice that the pseudo Drazin inverse is unique once it exists. Clearly, Drazin
invertible elements are $p$-Drazin invertible. According to \cite{Wang12}, $a\in R$ is $p$-Drazin invertible exactly when $a$ is a pseudo-polar element (i.e., there exists $p^2=p\in {\rm comm}^2(a)$ such that $a+p\in U(R)$ and $ap\in \sqrt{J(R)}$). For more interesting properties of $p$-Drazin inverses, one may refer to \cite{Drazin14,Mosic20,Zhu,Zou1} and the reference therewith.

In 2021, Mosi$\acute{c}$ \cite{Mosic21} introduced the concept of
the pseudo $n$-strong Drazin inverse which generalizes the concept
of $p$-Drazin inverses. Let $\mathbb{N}$ be the set of all positive
integers and $n\in \mathbb{N}$. An element $a\in R$ is called
\emph{pseudo $n$-strong Drazin invertible} (or, briefly,
\emph{$pns$-Drazin invertible}) if there exists $x\in R$ such that
\begin{center}
$xax=x$,~~$x\in {\rm comm}^{2}(a)$,~~$a^{n}-ax\in \sqrt{J(R)},$
\end{center}
where $x$ is a \emph{pseudo $n$-strong Drazin inverse} (or, for short, \emph{$pns$-Drazin inverse}) of $a$, which is unique when it exists, and denoted by $a^{\tiny{\textcircled{\qihao D}}}$. It was shown that any $pns$-Drazin invertible element is $p$-Drazin invertible (see, for instance, \cite{Mosic21}).

%On the other side, in \cite{ZMH} was considered the Drazin inverse matrix modification formulae with Peirce corners. In fact, there was obtained an explicit expression for the Drazin inverse of an arbitrary matrix in terms of an idempotent matrix and by using a corresponding Peirce corner matrix (see \cite[Theorem 2.3]{ZMH}).

Motivated by all of the mentioned above information, we continue in
the present paper the investigation of $pns$-Drazin inverses in
arbitrary (associative) rings. To that goal, our work is organized
thus: in Section 2, the elements $a,b\in R$ for which
$aa^{\tiny{\textcircled{\qihao D}}}=bb^{\tiny{\textcircled{\qihao
D}}}$ are studied, and some equivalent characterizations are
obtained. In particular, it is established that $a\in R$ is
$pns$-Drazin invertible if, and only if, $a$ is $p$-Drazin
invertible and $a-a^{n+1}\in \sqrt{J(R)}$ if, and only if, there
exists $p^2=p\in {\rm comm}^2(a)$ such that $ap\in \sqrt{J(R)}$ and
$1-(a+p)^n\in \sqrt{J(R)}.$ Further, Section 3 is devoted to the
exploration of $pns$-Drazin inverses with involution. Recall that a
ring $R$ is a \emph{$*$-ring} (see, e.g., \cite{Ber72}) if there
exists a map $*:R\rightarrow R$ such that $$(x+y)^*=x^*+y^*, ~
(xy)^*=y^*x^*, ~ (x^*)^*=x$$ for all $x,y\in R$. For a $*$-ring $R$
and $n\in \mathbb{N}$, we shall say that the element $a\in R$ is
{\it $pns$-$*$-Drazin invertible} if $a$ is $pns$-Drazin invertible
and $(aa^{\tiny{\textcircled{\qihao
D}}})^*=aa^{\tiny{\textcircled{\qihao D}}}$. On this vein, some
properties of $pns$-$*$-Drazin inverses are discussed too. It is
also proved that, for a $*$-ring $R$ and elements $a$, $b$, $c\in
R$, if $aba=ba^{2}=a^{2}c=aca$, then $ac$ (resp., $1-ac$) is
$pns$-$*$-Drazin invertible if, and only if, so is $ba$ (resp.,
$1-ba$) and, moreover, the related formulae are provided. In our
final Section 4, we deal with some characterizing theorems
pertaining to pseudo $\pi$-polar rings, that are those rings whose
elements are $pns$-Drazin invertible for various positive integers
$n$, examining their basic properties and transversal with the
classical sorts of periodic and strongly $\pi$-regular rings by
showing that any pseudo $\pi$-polar ring $R$ with nil $J(R)$ is
periodic and vice versa. Some other closely related assertions are
proved as well.
%We close the work with a difficult conjecture which
%positive solution will, hopefully, stimulate obtaining of the
%complete description of the wanted relations between these three
%classes of rings.

\medskip

%%%%%%%%%%%%%%%%%%%%%%%%%%%%%%%%%%%%%%%%%%%%%%%%%%%%%%%%%%%%%%%%%%%%%
%%%%%%%%%%%%%%%%%%%%%    Section 2   %%%%%%%%%%%%%%%%%%%%%%%%%%%%
%%%%%%%%%%%%%%%%%%%%%%%%%%%%%%%%%%%%%%%%%%%%%%%%%%%%%%%%%%%%%%%%%%%%%

\medskip

\section{\bf $pns$-Drazin inverses}

The following easy lemma will be used freely in the context.

\begin{lemma}\label{1-2}
Let $a,b\in R$ and $ab=ba$.
\par$(1)$ If $a\in \sqrt{J(R)}$, then $ab\in \sqrt{J(R)}.$
\par$(2)$ If $a,b\in \sqrt{J(R)}$, then $a+b\in\sqrt{J(R)}$.
\par$(3)$ If $a\in \sqrt{J(R)}, b\in U(R)$, then $a+b\in U(R)$.
\par$(4)$ $a\in \sqrt{J(R)}$ if, and only if, $au\in \sqrt{J(R)}$ for any $u\in U(R)\cap {\rm
comm}(a)$ if, and only if, $a^k\in\sqrt{J(R)}$ for any integer $k>1$.
\end{lemma}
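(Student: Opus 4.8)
The plan is to establish the four parts one at a time, using only two standard facts about the Jacobson radical: $J(R)$ is a two-sided ideal of $R$, and $1-j\in U(R)$ for every $j\in J(R)$. The commutativity hypothesis $ab=ba$ is what makes the arguments go through, since it permits the identity $(ab)^k=a^kb^k$ and the binomial expansion of $(a+b)^k$. For part $(1)$, I would pick $k\ge 1$ with $a^k\in J(R)$; then $(ab)^k=a^kb^k\in J(R)$ because an ideal absorbs multiplication, so $ab\in\sqrt{J(R)}$. For part $(2)$, I would pick $m,n\ge 1$ with $a^m\in J(R)$ and $b^n\in J(R)$ and expand $(a+b)^{m+n-1}=\sum_{i=0}^{m+n-1}\binom{m+n-1}{i}a^ib^{m+n-1-i}$; in each summand either $i\ge m$ or $m+n-1-i\ge n$, so each summand lies in $J(R)$, and therefore so does the whole sum, which gives $a+b\in\sqrt{J(R)}$.

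For part $(3)$, the first step is to note that $ab=ba$ forces $b^{-1}a=ab^{-1}$, so part $(1)$ yields $c:=b^{-1}a\in\sqrt{J(R)}$; fix $k\ge 1$ with $c^k\in J(R)$. The crux is the sub-claim that $1+c\in U(R)$. This follows from the finite geometric identity $(1+c)\sum_{i=0}^{k-1}(-c)^i=1-(-1)^kc^k$: the right-hand side is $1-j$ with $j=(-1)^kc^k\in J(R)$, hence a unit, and since $\sum_{i=0}^{k-1}(-c)^i$ is a polynomial in $c$ it commutes with $1+c$, so $1+c$ has a genuine two-sided inverse. Consequently $a+b=b(1+b^{-1}a)=b(1+c)$ is a product of two units, whence $a+b\in U(R)$.

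For part $(4)$, I would close the circle of implications as follows: $(\mathrm{i})\Rightarrow(\mathrm{ii})$ is precisely part $(1)$ applied to each admissible $u$; $(\mathrm{ii})\Rightarrow(\mathrm{i})$ follows by taking $u=1\in U(R)\cap{\rm comm}(a)$; $(\mathrm{i})\Rightarrow(\mathrm{iii})$ holds because if $a^m\in J(R)$ then $(a^k)^m=(a^m)^k\in J(R)$ for every integer $k>1$; and $(\mathrm{iii})\Rightarrow(\mathrm{i})$ follows by taking $k=2$, since $(a^2)^m\in J(R)$ says $a^{2m}\in J(R)$. I do not anticipate any real obstacle here — the whole lemma reduces to elementary ideal arithmetic — and the only step that is not a one-line computation is the sub-claim in part $(3)$, which is just the classical geometric-series argument reducing invertibility of $1+c$ to invertibility of $1-j$ with $j\in J(R)$.
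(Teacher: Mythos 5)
Your proof is correct; all four parts check out, including the geometric-series argument for the invertibility of $1+c$ (the left/right-inverse bookkeeping works since $\sum_{i=0}^{k-1}(-c)^i$ commutes with $1+c$) and the index count $i\ge m$ or $m+n-1-i\ge n$ in the binomial expansion. The paper gives no proof at all — it simply declares the lemma "easy" and uses it freely — and your argument is precisely the standard one the authors evidently had in mind, resting only on $J(R)$ being a two-sided ideal and $1-j\in U(R)$ for $j\in J(R)$.
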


The next lemma appeared as \cite[Theorem 3.1]{Mosic21}.

\begin{lemma}\label{1-1}
Let $a\in R$ and $n\in \mathbb{N}$. The following are equivalent$:$
\par$(1)$ The element $a$ is $pns$-Drazin invertible.
\par$(2)$ There exists $e^2=e\in {\rm comm}^2{(a)}$ such that $a^n-e\in
\sqrt{J(R)}.$\\
In this case, $e=aa^{\tiny{\textcircled{\qihao D}}}$ and
$a^{\tiny{\textcircled{\qihao D}}}=(1+a^n-e)^{-1}a^{n-1}e.$
\end{lemma}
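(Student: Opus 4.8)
The plan is to prove the equivalence of (1) and (2) in Lemma~\ref{1-1} directly, deriving the stated formulae along the way, using only Lemma~\ref{1-2} and the definition of $pns$-Drazin invertibility.

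\medskip
\noindent\textbf{$(1)\Rightarrow(2)$.} Suppose $x = a^{\tiny{\textcircled{\qihao D}}}$ exists, so $xax = x$, $x\in{\rm comm}^2(a)$, and $a^n - ax \in \sqrt{J(R)}$. First I would set $e = ax$ and check that $e$ is an idempotent lying in ${\rm comm}^2(a)$: since $x\in{\rm comm}^2(a)$ we have $ax = xa$, hence $e^2 = axax = a(xax) = ax = e$, and $e = ax$ commutes with everything commuting with $a$ because both $a$ and $x$ do. Then $a^n - e = a^n - ax \in \sqrt{J(R)}$ is exactly condition (2). It remains to express $x$ in terms of $e$. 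The natural idea is to show $1 + a^n - e \in U(R)$: indeed $a^n - e \in \sqrt{J(R)}$, so by Lemma~\ref{1-2}(3) (applied with the invertible element $1$) $1 + (a^n-e) \in U(R)$. Now I would verify $x = (1+a^n-e)^{-1} a^{n-1} e$ by a direct computation, using $xa = ax = e$, $xe = x$ (since $xe = xax = x$), $ea^{n-1} = a^n x^{?}$... concretely, $a^{n-1} e = a^{n-1} a x = a^n x$, and $a^n x = (a^n - e)x + ex = (a^n-e)x + x = (1 + a^n - e)x$; multiplying on the left by $(1+a^n-e)^{-1}$, which commutes with $x$ since $a^n - e$ does, gives the claim. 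Uniqueness of $x$ is already known (stated in the definition of $pns$-Drazin inverse), so $e = aa^{\tiny{\textcircled{\qihao D}}}$ follows.

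\medskip
\noindent\textbf{$(2)\Rightarrow(1)$.} Suppose $e^2 = e \in {\rm comm}^2(a)$ with $a^n - e \in \sqrt{J(R)}$. I would \emph{define} $x = (1 + a^n - e)^{-1} a^{n-1} e$, where invertibility of $1 + a^n - e$ again comes from Lemma~\ref{1-2}(3), and then check the three defining conditions of a $pns$-Drazin inverse. Since $e, a^n$, hence $a^n - e$ and $(1+a^n-e)^{-1}$, all lie in ${\rm comm}^2(a)$, so does $x$; this handles the double-commutant condition. For $xax = x$: note $a^{n-1} e$ commutes with everything in sight, and $(a^{n-1}e) a (a^{n-1}e) = a^{2n-1} e$ (using $e^2 = e$ and commutativity), while one computes $a^n \cdot a^{n-1} e = a^{2n-1}e$ and relates $a^{2n-1}e$ back via the identity $a^n e = (a^n - e)e + e = $ (an element congruent to $e$ modulo $\sqrt{J(R)}$); the cleanest route is to observe directly that with $y = a^{n-1}e$ and $u = 1 + a^n - e$ one has $ay = a^n e$ and $a^n e = e + (a^n - e)e = e(1 + a^n - e) = eu$, so $ay = eu$, whence $x = u^{-1}y$ satisfies $ax = u^{-1}ay = u^{-1}eu = e$ (as $e$ commutes with $u$), and then $xax = x(ax) = xe = u^{-1}a^{n-1}e\cdot e = u^{-1}a^{n-1}e = x$. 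Finally $a^n - ax = a^n - e \in \sqrt{J(R)}$, which is the last condition. Hence $a$ is $pns$-Drazin invertible with inverse $x$, and by uniqueness $x = a^{\tiny{\textcircled{\qihao D}}}$ and $e = ax = aa^{\tiny{\textcircled{\qihao D}}}$.

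\medskip
\noindent\textbf{Main obstacle.} The routine-but-delicate point is bookkeeping the identity $a^n e = e(1 + a^n - e)$ and its consequences, which is what makes the explicit formula $x = (1+a^n-e)^{-1}a^{n-1}e$ work; once that algebraic identity is isolated, both directions reduce to straightforward substitutions. A secondary point worth stating carefully is that all the elements involved ($e$, $a^n$, $a^n-e$, its inverse, $a^{n-1}e$, and $x$) pairwise commute and lie in ${\rm comm}^2(a)$, so that Lemma~\ref{1-2}(3) applies and the manipulations above are legitimate; this is where one must be a little careful rather than clever.
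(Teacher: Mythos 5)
Your proof is correct: the key identities $e=ax\in{\rm comm}^2(a)$, $a^{n-1}e=(1+a^n-e)x$ in one direction and $a\cdot a^{n-1}e=e(1+a^n-e)$ in the other all check out, and Lemma~\ref{1-2}(3) does give the invertibility of $1+a^n-e$. Note that the paper itself states this lemma without proof, citing it as Theorem 3.1 of Mosi\'c's paper, so there is no in-paper argument to compare against; your direct verification is the standard one and is complete.
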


For the sake of convenience, we use the symbol $R^{\tiny{\textcircled{\qihao D}}}$ to denote the set of all $pns$-Drazin invertible elements in a ring $R$. If $a\in R^{\tiny{\textcircled{\qihao D}}}$, we write
$a^{\Pi}=aa^{\tiny{\textcircled{\qihao D}}}$. Then, $a^{\Pi}=(a^{\Pi})^2$, $a^{\tiny{\textcircled{\qihao
D}}}=(1+a^n-a^{\Pi})^{-1}a^{n-1}a^{\Pi}$ and $a^{\tiny{\textcircled{\qihao D}}}
a^{\Pi}=a^{\Pi}a^{\tiny{\textcircled{\qihao D}}}=a^{\tiny{\textcircled{\qihao D}}}$.

As usual, for an element $a\in R$, we denote by $r(a)$ the right
annihilator of $a$ in $R$. Our motivating of the following result is
originated from \cite[Theorem 6.1]{Koliha}.

\begin{theorem}\label{1234}
Let $a\in R^{\tiny{\textcircled{\qihao D}}}$, $b\in R$ and $n\in
\mathbb{N}$. The following are equivalent$:$
\par$(1)$ $b\in R^{\tiny{\textcircled{\qihao D}}}$ and $a^{\Pi}=b^{\Pi}$.
\par$(2)$ $a^{\Pi}\in {\rm comm}^2(b)$ and $b^n-a^{\Pi}\in \sqrt{J(R)}.$
\par$(3)$ $a^{\Pi}\in {\rm comm}^2(b)$, $b(1-a^{\Pi})\in
\sqrt{J(R)}$ and $(1-b^n)a^{\Pi}\in \sqrt{J(R)}$.
\par$(4)$ $b\in R^{\tiny{\textcircled{\qihao D}}},$ $1-a^{\Pi}(1-a^{\tiny{\textcircled{\qihao D}}}
b)\in U(R)$ and $b^{\tiny{\textcircled{\qihao
D}}}=(1-a^{\Pi}(1-a^{\tiny{\textcircled{\qihao D}}}
b))^{-1}a^{\tiny{\textcircled{\qihao D}}}$.
\par$(5)$ $b\in R^{\tiny{\textcircled{\qihao D}}}$ and $b^{\tiny{\textcircled{\qihao D}}}-a^{\tiny{\textcircled{\qihao D}}}=a^{\tiny{\textcircled{\qihao D}}}(a-b)b^{\tiny{\textcircled{\qihao D}}}.$
\par$(6)$ $b\in R^{\tiny{\textcircled{\qihao D}}}$, $a^{\Pi}b^{\Pi}=b^{\Pi}a^{\Pi}$ and $1-(a^{\Pi}-b^{\Pi})^2\in
U(R)$.
\par$(7)$ $b\in R^{\tiny{\textcircled{\qihao D}}}$, $b^{\tiny{\textcircled{\qihao D}}} R\subseteq a^{\tiny{\textcircled{\qihao D}}} R$
and $r(b^{\tiny{\textcircled{\qihao D}}})\subseteq
r(a^{\tiny{\textcircled{\qihao D}}})$.
\end{theorem}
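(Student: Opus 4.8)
The plan is to establish the chain of equivalences by proving a convenient cycle, using Lemma \ref{1-1} as the bridge between the ``$pns$-Drazin invertible'' language and the idempotent-plus-$\sqrt{J(R)}$ language. Since $a\in R^{\tiny{\textcircled{\qihao D}}}$ is standing, we may freely use the idempotent $e:=a^{\Pi}=aa^{\tiny{\textcircled{\qihao D}}}\in {\rm comm}^2(a)$ with $a^n-e\in\sqrt{J(R)}$, together with the explicit formulae $a^{\tiny{\textcircled{\qihao D}}}=(1+a^n-e)^{-1}a^{n-1}e$ and $a^{\tiny{\textcircled{\qihao D}}}e=ea^{\tiny{\textcircled{\qihao D}}}=a^{\tiny{\textcircled{\qihao D}}}$. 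I would run the cycle $(1)\Rightarrow(2)\Rightarrow(3)\Rightarrow(1)$ first, then attach $(2)\Leftrightarrow(4)$, $(1)\Leftrightarrow(5)$, $(1)\Leftrightarrow(6)$, and $(1)\Leftrightarrow(7)$ as separate loops hanging off the already-proved equivalences.

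For $(1)\Rightarrow(2)$: if $b$ is $pns$-Drazin invertible with $b^{\Pi}=a^{\Pi}$, then by Lemma \ref{1-1} there is an idempotent $f\in{\rm comm}^2(b)$ with $b^n-f\in\sqrt{J(R)}$, and $f=bb^{\tiny{\textcircled{\qihao D}}}=b^{\Pi}=a^{\Pi}$, giving both $a^{\Pi}\in{\rm comm}^2(b)$ and $b^n-a^{\Pi}\in\sqrt{J(R)}$. For $(2)\Rightarrow(1)$: from $a^{\Pi}\in{\rm comm}^2(b)$ and $b^n-a^{\Pi}\in\sqrt{J(R)}$, Lemma \ref{1-1} (with $e:=a^{\Pi}$, which is idempotent and in ${\rm comm}^2(b)$) gives $b\in R^{\tiny{\textcircled{\qihao D}}}$ with $b^{\Pi}=a^{\Pi}$. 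For $(2)\Leftrightarrow(3)$: write $b^n-a^{\Pi}=b^n(1-a^{\Pi})-(1-b^n)a^{\Pi}$; since $a^{\Pi}$ commutes with $b$ (hence with $b^n$), the two summands are orthogonal (one lands in $(1-a^{\Pi})R(1-a^{\Pi})$, the other in $a^{\Pi}Ra^{\Pi}$), so by Lemma \ref{1-2}(2) and the Peirce decomposition, $b^n-a^{\Pi}\in\sqrt{J(R)}$ iff both pieces are; and $b^n(1-a^{\Pi})\in\sqrt{J(R)}$ iff $b(1-a^{\Pi})\in\sqrt{J(R)}$ because $b(1-a^{\Pi})$ and $b^n(1-a^{\Pi})=(b(1-a^{\Pi}))^n$ differ by a power, using Lemma \ref{1-2}(4) inside the corner ring $(1-a^{\Pi})R(1-a^{\Pi})$. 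For $(1)\Leftrightarrow(4)$ and $(1)\Leftrightarrow(5)$: assuming $b\in R^{\tiny{\textcircled{\qihao D}}}$ in both, these are purely formal identities relating $a^{\tiny{\textcircled{\qihao D}}},b^{\tiny{\textcircled{\qihao D}}}$; one shows $b^{\Pi}=a^{\Pi}$ forces the stated formula for $b^{\tiny{\textcircled{\qihao D}}}$ (using $a^{\Pi}a^{\tiny{\textcircled{\qihao D}}}=a^{\tiny{\textcircled{\qihao D}}}$, $b^{\Pi}b^{\tiny{\textcircled{\qihao D}}}=b^{\tiny{\textcircled{\qihao D}}}$, and the commutation of $a^{\Pi}$ with $b,b^{\tiny{\textcircled{\qihao D}}}$), and conversely that the invertibility of $1-a^{\Pi}(1-a^{\tiny{\textcircled{\qihao D}}}b)$ together with the formula, or the identity in $(5)$, forces $b^{\Pi}=a^{\Pi}$ after multiplying by suitable idempotents and powers of $a$ and $b$. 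For $(1)\Leftrightarrow(6)$: given $b\in R^{\tiny{\textcircled{\qihao D}}}$, the element $u:=1-(a^{\Pi}-b^{\Pi})^2$ is a unit iff the two commuting idempotents $a^{\Pi},b^{\Pi}$ are equal — indeed $(a^{\Pi}-b^{\Pi})^2=a^{\Pi}+b^{\Pi}-2a^{\Pi}b^{\Pi}$ is an idempotent when $a^{\Pi}b^{\Pi}=b^{\Pi}a^{\Pi}$, and $1-g\in U(R)$ for an idempotent $g$ forces $g=0$, i.e. $a^{\Pi}=b^{\Pi}$. For $(1)\Leftrightarrow(7)$: using $b^{\tiny{\textcircled{\qihao D}}}=b^{\Pi}b^{\tiny{\textcircled{\qihao D}}}$ and that $b^{\Pi}=b^{n}b^{\tiny{\textcircled{\qihao D}}}\,$-type relations let one recover $b^{\Pi}$ from the one-sided ideal $b^{\tiny{\textcircled{\qihao D}}}R$ and the annihilator $r(b^{\tiny{\textcircled{\qihao D}}})$, the two inclusions in $(7)$ are equivalent to $b^{\Pi}=a^{\Pi}$ by a standard left/right principal-ideal comparison for (generalized) group-invertible elements.

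The main obstacle I anticipate is $(2)\Leftrightarrow(3)$, specifically the passage between $b^n(1-a^{\Pi})\in\sqrt{J(R)}$ and $b(1-a^{\Pi})\in\sqrt{J(R)}$: one must argue inside the corner ring $(1-a^{\Pi})R(1-a^{\Pi})$ that $a^{\Pi}\in{\rm comm}^2(b)$ makes $t:=b(1-a^{\Pi})$ central enough that $t^n\in\sqrt{J(R)}\iff t\in\sqrt{J(R)}$ via Lemma \ref{1-2}(4), and to be careful that $\sqrt{J(R)}$ interacts correctly with the Peirce corners (i.e.\ $J\big((1-a^{\Pi})R(1-a^{\Pi})\big)=(1-a^{\Pi})J(R)(1-a^{\Pi})$, a standard fact). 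The formal steps $(4),(5),(6),(7)$ are bookkeeping once one commits to the normal form $b^{\Pi}=a^{\Pi}$; the only subtlety there is checking the converse directions genuinely recover $b^{\Pi}=a^{\Pi}$ rather than something weaker, which in each case comes down to hitting the given identity with the idempotent $a^{\Pi}$ or $b^{\Pi}$ and simplifying.
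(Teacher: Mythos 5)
Your plan is essentially the paper's proof: Lemma \ref{1-1} for $(1)\Leftrightarrow(2)$, the splitting $b^n-a^{\Pi}=(b^n-a^{\Pi})(1-a^{\Pi})+(b^n-a^{\Pi})a^{\Pi}$ together with Lemma \ref{1-2} for $(2)\Leftrightarrow(3)$ (your anticipated obstacle is a non-issue: since $b$ commutes with $a^{\Pi}$ one has $(b(1-a^{\Pi}))^n=b^n(1-a^{\Pi})$, so Lemma \ref{1-2}(4) applies directly in $R$ with no corner-ring radical fact), the idempotent-is-a-unit argument for $(6)$, and the principal-ideal/annihilator comparison for $(7)$. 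The only organizational difference is that the paper handles $(4)$ and $(5)$ via the chain $(1)\Rightarrow(4)\Rightarrow(5)\Rightarrow(1)$ rather than two separate biconditionals, which spares the direct verification of $(4)\Rightarrow(1)$ that your sketch leaves vaguest.
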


\begin{proof}
$(1)\Leftrightarrow(2)$. It follows immediately from Lemma \ref{1-1}.

$(2)\Leftrightarrow(3)$. Note that $ba^{\Pi}=a^{\Pi}b$ is implied by $a^{\Pi}\in {\rm comm}^2(b)$. Therefore,
$$b^n-a^{\Pi}\in \sqrt{J(R)}\Leftrightarrow (b^n-a^{\Pi})(1-a^{\Pi})\in \sqrt{J(R)},~(b^n-a^{\Pi})a^{\Pi}\in
\sqrt{J(R)},$$ and $$(b^n-a^{\Pi})(1-a^{\Pi})=b^n(1-a^{\Pi})\in \sqrt{J(R)}\Leftrightarrow  b(1-a^{\Pi})\in \sqrt{J(R)}.$$
Furthermore, $(b^n-a^{\Pi})a^{\Pi}=(b^n-1)a^{\Pi} \in \sqrt{J(R)}
\Leftrightarrow (1-b^n)a^{\Pi}\in \sqrt{J(R)}.$ This proves the
equivalence between $(2)$ and $(3)$.

$(1)\Rightarrow(4)$. By $(1)$, $a^{\Pi}=b^{\Pi}$. Observe that
$1-a^{\Pi}(1-a^{\tiny{\textcircled{\qihao D}}}
b)=(1-a^{\Pi})+a^{\tiny{\textcircled{\qihao D}}} b$. Consequently,
\begin{equation*}
\begin{split}
[(1-a^{\Pi})+a^{\tiny{\textcircled{\qihao D}}}
b][(1-a^{\Pi})+b^{\tiny{\textcircled{\qihao D}}} a ]
&=(1-a^{\Pi})+a^{\tiny{\textcircled{\qihao D}}} b(1-a^{\Pi})+(1-a^{\Pi})b^{\tiny{\textcircled{\qihao D}}} a+a^{\tiny{\textcircled{\qihao D}}} b b^{\tiny{\textcircled{\qihao D}}} a\\
&=1-a^{\Pi}+a^{\tiny{\textcircled{\qihao D}}} (1-a^{\Pi}) b+(1-b^{\Pi})b^{\tiny{\textcircled{\qihao D}}} a+a^{\tiny{\textcircled{\qihao D}}}  b^{\Pi} a\\
&=1-a^{\Pi}+a^{\tiny{\textcircled{\qihao D}}}  a^{\Pi} a=1.
\end{split}
\end{equation*}
We can similarly deduce that
$[(1-a^{\Pi})+b^{\tiny{\textcircled{\qihao D}}} a
][(1-a^{\Pi})+a^{\tiny{\textcircled{\qihao D}}} b]=1$. So
$1-a^{\Pi}(1-a^{\tiny{\textcircled{\qihao D}}}
b)=(1-a^{\Pi})+a^{\tiny{\textcircled{\qihao D}}} b\in U(R)$. It
follows that
$$(1-a^{\Pi}(1-a^{\tiny{\textcircled{\qihao D}}} b))b^{\tiny{\textcircled{\qihao D}}}=(1-a^{\Pi}+a^{\tiny{\textcircled{\qihao D}}}
b)b^{\tiny{\textcircled{\qihao
D}}}=(1-b^{\Pi})b^{\tiny{\textcircled{\qihao D}}}
+a^{\tiny{\textcircled{\qihao D}}} bb^{\tiny{\textcircled{\qihao
D}}}=a^{\tiny{\textcircled{\qihao D}}}
b^{\Pi}=a^{\tiny{\textcircled{\qihao D}}}
a^{\Pi}=a^{\tiny{\textcircled{\qihao D}}},$$ which implies that
$b^{\tiny{\textcircled{\qihao
D}}}=(1-a^{\Pi}(1-a^{\tiny{\textcircled{\qihao D}}}
b))^{-1}a^{\tiny{\textcircled{\qihao D}}}$.

$(4)\Rightarrow(5)$. By $(4),$ we have $a^{\tiny{\textcircled{\qihao
D}}} =(1-a^{\Pi}(1-a^{\tiny{\textcircled{\qihao D}}}
b))b^{\tiny{\textcircled{\qihao D}}}$. Thus,
\begin{center}
$b^{\tiny{\textcircled{\qihao D}}}-a^{\tiny{\textcircled{\qihao
D}}}=b^{\tiny{\textcircled{\qihao
D}}}-(1-a^{\Pi}(1-a^{\tiny{\textcircled{\qihao D}}}
b))b^{\tiny{\textcircled{\qihao D}}}
=(a^{\Pi}-a^{\tiny{\textcircled{\qihao D}}}
b)b^{\tiny{\textcircled{\qihao D}}} =a^{\tiny{\textcircled{\qihao
D}}}(a-b)b^{\tiny{\textcircled{\qihao D}}}.$
\end{center}

$(5)\Rightarrow(1)$. Since $b^{\tiny{\textcircled{\qihao
D}}}-a^{\tiny{\textcircled{\qihao D}}}=a^{\tiny{\textcircled{\qihao
D}}}(a-b)b^{\tiny{\textcircled{\qihao D}}},$ one has
$b^{\tiny{\textcircled{\qihao D}}}=a^{\tiny{\textcircled{\qihao
D}}}[1+(a-b)b^{\tiny{\textcircled{\qihao D}}}]$ and
$a^{\tiny{\textcircled{\qihao D}}}=[1-a^{\tiny{\textcircled{\qihao
D}}}(a-b)]b^{\tiny{\textcircled{\qihao D}}}$. Then,
$(1-a^{\Pi})b^{\tiny{\textcircled{\qihao
D}}}=(1-a^{\Pi})a^{\tiny{\textcircled{\qihao
D}}}[1+(a-b)b^{\tiny{\textcircled{\qihao D}}}]=0$ and
$a^{\tiny{\textcircled{\qihao
D}}}(1-b^{\Pi})=[1-a^{\tiny{\textcircled{\qihao
D}}}(a-b)]b^{\tiny{\textcircled{\qihao D}}}(1-b^{\Pi})=0$, it
follows that $0=(1-a^{\Pi})b^{\tiny{\textcircled{\qihao D}}}
b=(1-a^{\Pi})b^{\Pi}$ and $0=aa^{\tiny{\textcircled{\qihao
D}}}(1-b^{\Pi})=a^{\Pi}(1-b^{\Pi})$. Therefore,
$a^{\Pi}=a^{\Pi}b^{\Pi}=b^{\Pi}$.

$(1)\Rightarrow(6)$. It is clear.

$(6)\Rightarrow(1)$. From $a^{\Pi}b^{\Pi}=b^{\Pi}a^{\Pi}$, we have
$(a^{\Pi}-b^{\Pi})=(a^{\Pi}-b^{\Pi})^3$. So $1-(a^{\Pi}-b^{\Pi})^2$
is an idempotent. By hypothesis, $1-(a^{\Pi}-b^{\Pi})^2\in U(R).$
Thus, $1-(a^{\Pi}-b^{\Pi})^2=1,$ which implies
$(a^{\Pi}-b^{\Pi})^2=0$, and hence
$a^{\Pi}-b^{\Pi}=(a^{\Pi}-b^{\Pi})^3=0.$ So, the result follows.

$(1)\Rightarrow(7)$. Since $a^{\Pi}=b^{\Pi},$ we obtain
$b^{\tiny{\textcircled{\qihao D}}}
R=(b^{\Pi}b^{\tiny{\textcircled{\qihao D}}})
R=(a^{\Pi}b^{\tiny{\textcircled{\qihao D}}}) R
=(a^{\tiny{\textcircled{\qihao D}}} a b^{\tiny{\textcircled{\qihao
D}}}) R \subseteq a^{\tiny{\textcircled{\qihao D}}} R$. Similarly, $
Ra^{\tiny{\textcircled{\qihao D}}}=R(a^{\tiny{\textcircled{\qihao
D}}} a^{\Pi}) =R(a^{\tiny{\textcircled{\qihao D}}}
b^{\Pi})=R(a^{\tiny{\textcircled{\qihao D}}}
b^{\Pi}b^{\tiny{\textcircled{\qihao D}}})\subseteq
Rb^{\tiny{\textcircled{\qihao D}}}$, which forces that
$r(b^{\tiny{\textcircled{\qihao
D}}})=r(Rb^{\tiny{\textcircled{\qihao D}}})\subseteq
r(Ra^{\tiny{\textcircled{\qihao D}}})=r(a^{\tiny{\textcircled{\qihao
D}}})$.

$(7)\Rightarrow(1)$. One observes that $b^{\tiny{\textcircled{\qihao
D}}}=b^{\tiny{\textcircled{\qihao D}}} b
b^{\tiny{\textcircled{\qihao D}}}$ and $b^{\tiny{\textcircled{\qihao
D}}} b=b^{\Pi}$. Then, $b^{\tiny{\textcircled{\qihao D}}} R =
b^{\tiny{\textcircled{\qihao D}}} b R=b^{\Pi}R$ and
$Rb^{\tiny{\textcircled{\qihao D}}}=Rbb^{\tiny{\textcircled{\qihao
D}}}=Rb^{\Pi}.$ We can prove similarly that $
a^{\tiny{\textcircled{\qihao D}}} R =a^{\tiny{\textcircled{\qihao
D}}} a R=a^{\Pi}R$ and $Ra^{\tiny{\textcircled{\qihao
D}}}=Raa^{\tiny{\textcircled{\qihao D}}}=Ra^{\Pi}.$ Since
$b^{\tiny{\textcircled{\qihao D}}} R\subseteq
a^{\tiny{\textcircled{\qihao D}}} R$, we get $b^{\Pi}R\subseteq
a^{\Pi}R$, which gives $b^{\Pi}=a^{\Pi}b^{\Pi}$. Note that
$r(b^{\tiny{\textcircled{\qihao
D}}})=r(Rb^{\tiny{\textcircled{\qihao
D}}})=r(Rb^{\Pi})=(1-b^{\Pi})R$ and $r(a^{\tiny{\textcircled{\qihao
D}}})=r(Ra^{\tiny{\textcircled{\qihao
D}}})=r(Ra^{\Pi})=(1-a^{\Pi})R$. Since
$r(b^{\tiny{\textcircled{\qihao D}}})\subseteq
r(a^{\tiny{\textcircled{\qihao D}}})$, we have
$(1-b^{\Pi})R\subseteq (1-a^{\Pi})R,$ which yields
$1-b^{\Pi}=(1-a^{\Pi})(1-b^{\Pi}).$ Thus, $a^{\Pi}=a^{\Pi}b^{\Pi}$
and, therefore, $a^{\Pi}=b^{\Pi}$.
\end{proof}

As a consequence, we derive the following.

\begin{corollary}\label{1111}
Let $a\in R$, $e^2=e\in R$ and $n\in \mathbb{N}$. The following are equivalent$:$
\par$(1)$ $a$ is $pns$-Drazin invertible with $a^{\Pi}=e$.
\par$(2)$ $e^2=e\in {\rm comm}^2{(a)}$, $a(1-e)\in \sqrt{J(R)}$ and $(1-a^n)e\in \sqrt{J(R)}.$
\end{corollary}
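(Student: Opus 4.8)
The plan is to reduce the statement to Lemma~\ref{1-1} and then to run, essentially verbatim, the orthogonal‐idempotent computation that appears in the equivalence $(2)\Leftrightarrow(3)$ of Theorem~\ref{1234}; the only twist is that here we may \emph{not} assume beforehand that $a$ is $pns$-Drazin invertible, so it is Lemma~\ref{1-1} rather than Theorem~\ref{1234} that has to be invoked.

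First I would note that, by Lemma~\ref{1-1}, statement $(1)$ is equivalent to the conjunction
\[
e^2=e\in{\rm comm}^2(a)\qquad\text{and}\qquad a^n-e\in\sqrt{J(R)} .
\]
Indeed, if such an $e$ exists then $a$ is $pns$-Drazin invertible and $e=aa^{\tiny{\textcircled{\qihao D}}}=a^{\Pi}$, while conversely $(1)$ supplies exactly this data. Since the idempotency condition $e^2=e\in{\rm comm}^2(a)$ also occurs verbatim in $(2)$, it suffices to prove, \emph{under the standing hypothesis} $e^2=e\in{\rm comm}^2(a)$ — so that $ae=ea$ and $a$ commutes with the orthogonal idempotents $e$ and $1-e$ — that
\[
a^n-e\in\sqrt{J(R)}\ \Longleftrightarrow\ a(1-e)\in\sqrt{J(R)}\ \text{ and }\ (1-a^n)e\in\sqrt{J(R)} .
\]

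For this I would decompose along $1=e+(1-e)$. A direct computation gives $(a^n-e)(1-e)=a^n(1-e)$ and $(a^n-e)e=-(1-a^n)e$, hence
\[
a^n-e=a^n(1-e)-(1-a^n)e ,
\]
where the two summands are polynomials in $a$ and $e$, hence commute, and their product is $0$. If $a^n-e\in\sqrt{J(R)}$, multiplying by the commuting idempotents $1-e$ and $e$ and applying Lemma~\ref{1-2}(1) yields $a^n(1-e)\in\sqrt{J(R)}$ and $(1-a^n)e\in\sqrt{J(R)}$ (the sign is irrelevant, as $J(R)$ is an ideal so $\sqrt{J(R)}$ is closed under negation); conversely, if both summands lie in $\sqrt{J(R)}$, then since they commute Lemma~\ref{1-2}(2) gives their sum $a^n-e\in\sqrt{J(R)}$. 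Finally, because $a$ commutes with the idempotent $1-e$ one has $a^n(1-e)=\bigl(a(1-e)\bigr)^n$, so by Lemma~\ref{1-2}(4) (trivially when $n=1$) this lies in $\sqrt{J(R)}$ precisely when $a(1-e)$ does. Chaining these equivalences completes the argument.

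I do not anticipate a real obstacle: the only points needing a moment's care are checking that the two pieces $a^n(1-e)$ and $-(1-a^n)e$ genuinely commute (so that Lemma~\ref{1-2}(2) applies in the backward direction) and the harmless observation that membership in $\sqrt{J(R)}$ is unaffected by the sign — both of which are immediate. Everything else is a routine application of the cited lemmas.
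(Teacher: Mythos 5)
Your argument is correct and rests on exactly the orthogonal-idempotent decomposition $a^n-e=a^n(1-e)-(1-a^n)e$ that drives the equivalence $(2)\Leftrightarrow(3)$ of Theorem~\ref{1234}, so it is essentially the paper's computation inlined; the paper itself gets the corollary in two lines by first observing that any idempotent $f$ is $pns$-Drazin invertible with $f^{\Pi}=f$ and then applying Theorem~\ref{1234} with $e$ as the anchor element and $a$ in the role of $b$. Your self-contained route via Lemma~\ref{1-1} is equally valid.
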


\begin{proof}
Given any $f^2=f\in R,$ we can check that $f\in
R^{\tiny{\textcircled{\qihao D}}}$ and $f^{\tiny{\textcircled{\qihao
D}}}=f$, whence $f^{\Pi}=ff^{\tiny{\textcircled{\qihao D}}}=f$.

$(1)\Rightarrow(2)$. Since $e^2=e\in R$, one sees that $e\in
R^{\tiny{\textcircled{\qihao D}}}$ with $e^{\Pi}=e$. So, we have
$a^{\Pi}=e^{\Pi}$. In view of Theorem \ref{1234}, $e^2=e\in {\rm
comm}^2{(a)}$, $a(1-e)\in \sqrt{J(R)}$ and $(1-a^n)e\in
\sqrt{J(R)}$.

$(2)\Rightarrow(1)$. As $e=e^{\Pi}$, it follows at once from Theorem
\ref{1234} that $a\in R^{\tiny{\textcircled{\qihao D}}}$ and
$a^{\Pi}=e^{\Pi}=e$.
\end{proof}

By virtue of Corollary \ref{1111}, we extract the following assertion.

\begin{corollary}
Let $a\in R$ and $n\in \mathbb{N}$. Then,
\par$(1)$ $a$ is $pns$-Drazin
invertible with $a^{\Pi}=1$ if, and only if, $a^n-1\in \sqrt{J(R)}.$
\par$(2)$ $a$ is $pns$-Drazin
invertible with $a^{\Pi}=0$ if, and only if, $a\in \sqrt{J(R)}.$
\end{corollary}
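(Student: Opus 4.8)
The plan is to deduce this corollary directly from Corollary \ref{1111} by specializing the idempotent $e$ to the two extreme choices $e=1$ and $e=0$, and then simplifying the two membership conditions that appear there.

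For part $(1)$, I would take $e=1$ in Corollary \ref{1111}. One checks trivially that $1^2=1\in{\rm comm}^2(a)$ always holds, so that condition is vacuous. The condition $a(1-e)\in\sqrt{J(R)}$ becomes $a\cdot 0=0\in\sqrt{J(R)}$, which is automatic. Thus the list of conditions in $(2)$ of Corollary \ref{1111} collapses to the single requirement $(1-a^n)\cdot 1=1-a^n\in\sqrt{J(R)}$, and since $\sqrt{J(R)}$ is closed under negation this is the same as $a^n-1\in\sqrt{J(R)}$. Hence $a$ is $pns$-Drazin invertible with $a^{\Pi}=1$ if and only if $a^n-1\in\sqrt{J(R)}$.

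For part $(2)$, I would take $e=0$. Again $0^2=0\in{\rm comm}^2(a)$ is automatic, and $(1-a^n)e=(1-a^n)\cdot 0=0\in\sqrt{J(R)}$ holds trivially. So the conditions in Corollary \ref{1111}$(2)$ reduce to $a(1-0)=a\in\sqrt{J(R)}$. Therefore $a$ is $pns$-Drazin invertible with $a^{\Pi}=0$ precisely when $a\in\sqrt{J(R)}$.

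There is essentially no obstacle here: the only point requiring a word of care is confirming that the double-commutant condition $e\in{\rm comm}^2(a)$ is indeed satisfied when $e\in\{0,1\}$, which is immediate since $0$ and $1$ commute with every element of $R$, and that $\sqrt{J(R)}$ contains $0$ and is stable under sign change (both clear from the definition $\sqrt{J(R)}=\{a\in R:a^k\in J(R)\text{ for some }k\geq 1\}$, as $0\in J(R)$ and $(-a)^k=\pm a^k$). Everything else is a direct substitution into the already-proven Corollary \ref{1111}, so the proof is just these two specializations written out in one or two lines each.
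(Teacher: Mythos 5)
Your proposal is correct and is exactly the route the paper intends: the corollary is stated as an immediate consequence of Corollary \ref{1111}, obtained by specializing the idempotent to $e=1$ and $e=0$ and noting that the conditions $a(1-e)\in\sqrt{J(R)}$ and $(1-a^n)e\in\sqrt{J(R)}$ each become either vacuous or the stated condition. The small checks you flag ($0,1\in{\rm comm}^2(a)$ and stability of $\sqrt{J(R)}$ under sign change) are all that is needed.
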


%\begin{proof}
%$(1)$ It is an easy exercise that the identity element $1 \in
%R^{\tiny{\textcircled{\qihao D}}}$ with $1^{\Pi}=1$. So the result
%follows immediately from the equivalence of
%$``(1)\Leftrightarrow(2)"$ in Theorem \ref{1234}.

%$(2)$ Let $a=0\in R^{\tiny{\textcircled{\qihao D}}}.$ Then
%$a^{\Pi}=0$. By Theorem \ref{1234}, $b\in
%R^{\tiny{\textcircled{\qihao D}}}$ and $b^{\Pi}=0=a^{\Pi}$ if and
%only if $b^n\in \sqrt{J(R)}$ if and only if $b\in \sqrt{J(R)}$.
%\end{proof}

Recall again that an element $a\in R$ is \emph{$p$-Drazin invertible} if there exists $y\in R$ such that
$$yax=y, ~~y\in {\rm comm}^{2}(a)~~ {\rm and}~ a-a^{2}y\in \sqrt{J(R)},$$ where $y$ is called a $p$-Drazin inverse of $a$.

\medskip

The following result establishes the relationship between a $pns$-Drazin invertible element and a $p$-Drazin invertible element in a ring.

\begin{theorem}\label{1-3}
Let $a\in R$ and $n\in \mathbb{N}$. The following are equivalent$:$
\par$(1)$ The element $a$ is $pns$-Drazin invertible.
\par$(2)$ There exists a unique $e^2=e\in {\rm comm}{(a)}$ such that $a^n-e\in \sqrt{J(R)}.$
\par$(3)$ There exists a unique $y\in R$ such that $ya=ay$, $y=yay$ and $a^n-ay\in \sqrt{J(R)}.$
\par$(4)$ $a$ is $p$-Drazin invertible and $a-a^{n+1}\in
\sqrt{J(R)}$.
\end{theorem}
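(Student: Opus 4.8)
The plan is to establish the cycle $(1)\Rightarrow(2)\Rightarrow(3)\Rightarrow(1)$ and then the equivalence $(1)\Leftrightarrow(4)$ separately, leaning on Lemma \ref{1-1}, Lemma \ref{1-2}, and the basic identities for $a^{\Pi}$ and $a^{\tiny{\textcircled{\qihao D}}}$ recorded after Lemma \ref{1-1}. For $(1)\Rightarrow(2)$: by Lemma \ref{1-1} there is an idempotent $e=a^{\Pi}\in{\rm comm}^2(a)\subseteq{\rm comm}(a)$ with $a^n-e\in\sqrt{J(R)}$; existence is immediate, and uniqueness I would extract from the argument in the proof of Lemma \ref{1-1} (or reprove directly): if $e,f\in{\rm comm}(a)$ are idempotents with $a^n-e,a^n-f\in\sqrt{J(R)}$, then $ef=fe$ and $e-f=(a^n-f)-(a^n-e)\in\sqrt{J(R)}$; since $(e-f)e(1-f)=e-ef$ and similar manipulations show $e-f$ is (up to the commuting idempotents) a difference of commuting idempotents lying in $\sqrt{J(R)}$, one gets $e-f$ nilpotent hence $0$ — here I would mimic the $(6)\Rightarrow(1)$ argument of Theorem \ref{1234}, noting $e-ef$ and $f-ef$ are orthogonal idempotents in $\sqrt{J(R)}$, so both vanish. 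The step $(2)\Rightarrow(3)$ is a translation: given the unique idempotent $e\in{\rm comm}(a)$, set $y=(1+a^n-e)^{-1}a^{n-1}e$ (the element is invertible by Lemma \ref{1-2}(3) since $a^n-e\in\sqrt{J(R)}$ commutes with everything in sight); then check $ay=ya$, $y=yay$, $ay=e$ and hence $a^n-ay=a^n-e\in\sqrt{J(R)}$, and deduce uniqueness of $y$ from uniqueness of $e$ together with the fact that any such $y$ forces $ay$ to be an idempotent in ${\rm comm}(a)$ congruent to $a^n$ mod $\sqrt{J(R)}$. Finally $(3)\Rightarrow(1)$ is nearly immediate: such a $y$ satisfies all the defining conditions of a $pns$-Drazin inverse except that ${\rm comm}(a)$ appears in place of ${\rm comm}^2(a)$; but the standard argument (as for ordinary Drazin inverses, and as implicitly used throughout) shows the two formulations coincide, because $ay$ is then an idempotent in ${\rm comm}(a)$ with $a^n-ay\in\sqrt{J(R)}$, so by the uniqueness just proved $ay=a^{\Pi}\in{\rm comm}^2(a)$, forcing $y=a^{\tiny{\textcircled{\qihao D}}}$.

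For $(1)\Leftrightarrow(4)$, which is the substantive part, I would argue as follows. Assume $(1)$: then $a$ is $p$-Drazin invertible since any $pns$-Drazin invertible element is (as recalled in the introduction), and with $e=a^{\Pi}$ we have $a^n-e\in\sqrt{J(R)}$. Multiplying by $a$ and using $ae=ea$ gives $a^{n+1}-ae\in\sqrt{J(R)}$; but the $p$-Drazin inverse data also gives $a-a^2a^{\tiny{\textcircled{\qihao D}}}=a-ae\cdot(\text{unit})$, more precisely $a(1-e)\in\sqrt{J(R)}$ (this is the pseudo-polar description: $a(1-e)$ is the "nilpotent-like" part). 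Then $a-a^{n+1}=a(1-e)-(a^{n+1}-ae)\in\sqrt{J(R)}$ by Lemma \ref{1-2}(2), since both summands commute with $a$ and lie in $\sqrt{J(R)}$. Conversely, assume $(4)$: let $p^2=p\in{\rm comm}^2(a)$ be the pseudo-polar idempotent of $a$, so $a+p\in U(R)$, $ap\in\sqrt{J(R)}$, and $p=aa^{(p)}$ for the $p$-Drazin inverse $a^{(p)}$. I claim $e:=p$ witnesses $(2)$. Indeed $a(1-p)=ap\in\sqrt{J(R)}$ already, so by Corollary \ref{1111} it remains to show $(1-a^n)p\in\sqrt{J(R)}$. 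From $a-a^{n+1}\in\sqrt{J(R)}$ and multiplying by $p$: $ap-a^{n+1}p\in\sqrt{J(R)}$; since $ap\in\sqrt{J(R)}$ we get $a^{n+1}p\in\sqrt{J(R)}$, which is not yet what I want — so instead I work on the complementary corner. On $(1-p)$, the element $a(1-p)+p$ equals $a+p$ up to $ap\in\sqrt{J(R)}$... the cleaner route: inside the corner ring $(1-p)R(1-p)$, the element $u=a(1-p)$ is a unit (this is exactly the content of pseudo-polarity, $a+p$ being a unit of $R$), and $a-a^{n+1}\in\sqrt{J(R)}$ restricts to $u-u^{n+1}\in\sqrt{J((1-p)R(1-p))}$, so $u^n-1\in\sqrt{J}$ in that corner since $u$ is a unit there; lifting back, $(a^n-1)(1-p)\in\sqrt{J(R)}$, equivalently $(1-a^n)(1-p)\in\sqrt{J(R)}$. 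Wait — I need $(1-a^n)p$, not $(1-a^n)(1-p)$.

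Let me reconsider the final step, as this sign-of-the-idempotent issue is where the real care is needed. The correct witness should be $e=1-p$ rather than $p$: with the pseudo-polar $p$ we have $ap\in\sqrt{J(R)}$ and $a+p\in U(R)$, so $a(1-(1-p))=ap\in\sqrt{J(R)}$ — good, that matches the first condition of Corollary \ref{1111} with $e=1-p$. For the second, $(1-a^n)(1-p)\in\sqrt{J(R)}$: as argued above, $u=a(1-p)=a+(1-p)-1$ differs from the unit $a+p$ by $2(1-p)-1$... that is not obviously helpful, so instead I use directly that $a(1-p)$ is invertible in the corner $(1-p)R(1-p)$ with $J$ of that corner equal to $(1-p)\sqrt{J(R)}(1-p)\cap$(corner), which follows from $p\in{\rm comm}^2(a)$ and standard Peirce-decomposition facts about $J$; then $a-a^{n+1}\in\sqrt{J(R)}$ gives, after multiplying by $(1-p)$ and using that $(1-p)$-corner arithmetic, $a(1-p)-(a(1-p))^{n+1}\in\sqrt{J}$ in the corner, hence $1-(a(1-p))^n\in\sqrt{J}$ in the corner because $a(1-p)$ is a unit there (Lemma \ref{1-2}(3),(4) applied in the corner), i.e. $(1-a^n)(1-p)\in\sqrt{J(R)}$. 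So $e=1-p$ satisfies both conditions of Corollary \ref{1111}, giving $(1)$ with $a^{\Pi}=1-p$. The main obstacle I anticipate is precisely this passage to the Peirce corner $(1-p)R(1-p)$ and the verification that "$\sqrt{J}$" behaves well under restriction to a corner cut out by a central-in-${\rm comm}(a)$ idempotent — one must check $\sqrt{J((1-p)R(1-p))}=(1-p)R(1-p)\cap\sqrt{J(R)}$, which rests on $J(eRe)=eJ(R)e$ for an idempotent $e$ together with the definition of $\sqrt{\cdot}$; everything else is routine manipulation with commuting elements and Lemma \ref{1-2}.
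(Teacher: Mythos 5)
Your treatment of $(1)\Leftrightarrow(4)$, of the existence half of $(1)\Rightarrow(2)$, and of $(2)\Leftrightarrow(3)$ is essentially sound and close to the paper's. In particular, your corrected choice $e=1-p$ in $(4)\Rightarrow(1)$ is the right one, and the detour through the Peirce corner $(1-p)R(1-p)$ does work (using $J(fRf)=fJ(R)f$); the paper avoids the corner by computing directly with the unit $u=a+p$, via $(a-a^{n+1})(1-p)=(u-u^{n+1})(1-p)$ and $\bigl(a^n-(1-p)\bigr)p=(ap)^n$. Since the pseudo-polar idempotent $p$ already lies in ${\rm comm}^2(a)$, no double-commutant issue arises in that direction.

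The genuine gap is $(3)\Rightarrow(1)$ (equivalently $(2)\Rightarrow(1)$), i.e.\ the upgrade from ${\rm comm}(a)$ to ${\rm comm}^2(a)$. Your argument there is circular: you conclude $ay=a^{\Pi}\in{\rm comm}^2(a)$ ``by the uniqueness just proved,'' but $a^{\Pi}$ and that uniqueness statement were obtained under hypothesis $(1)$, which is exactly what is to be established. The ``standard argument'' for ordinary Drazin inverses does not transfer: there one has $a^k(1-e)=0$ exactly, which forces $ex(1-e)=e^kx(1-e)=\cdots=0$ for every $x\in{\rm comm}(a)$; in the present setting one only gets $a^{nk}(1-e)\in J(R)$, hence only $ex(1-e)\in J(R)$, which by itself does not vanish. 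The uniqueness hypothesis in $(2)$/$(3)$ must be used non-circularly: the paper sets $p=e+ex(1-e)$ for an arbitrary $x\in{\rm comm}(a)$, checks that $p$ is again an idempotent in ${\rm comm}(a)$ with $a^n-p\in\sqrt{J(R)}$ (precisely because $ex(1-e)\in J(R)$), and then invokes the uniqueness assumed in $(2)$ to force $p=e$, i.e.\ $ex=exe$; the symmetric computation gives $xe=exe$, whence $e\in{\rm comm}^2(a)$. This perturbation step is the heart of the theorem and is absent from your proposal. A smaller point: in your uniqueness argument for $(1)\Rightarrow(2)$ you assert $ef=fe$ for two idempotents merely in ${\rm comm}(a)$; this does not follow from commuting with $a$ alone, and should be justified by taking the comparison idempotent to be $a^{\Pi}\in{\rm comm}^2(a)$ while $f\in{\rm comm}(a)$.
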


\begin{proof}
$(1)\Rightarrow(4)$. Clearly, $pns$-Drazin invertible elements are
$p$-Drazin invertible. In view of Lemma \ref{1-1}, there exists
$e^2=e\in {\rm comm}^2{(a)}$ such that $a^n-e\in \sqrt{J(R)}$. As
$ae=ea,$ we have $$a^{n+1}-ae=a(a^n-e)\in \sqrt{J(R)},$$ and
$$[a(1-e)]^n=(a^n-e)(1-e)\in \sqrt{J(R)},$$ which implies $a(1-e)\in
\sqrt{J(R)}.$ It now follows that $$a-a^{n+1}=a(1-e)-(a^{n+1}-ae)\in
\sqrt{J(R)},$$ as wanted.

$(4)\Rightarrow(2)$. Since $a$ is $p$-Drazin invertible, with
\cite[Theorem 3.2]{Wang12} at hand there exists $p^2=p\in {\rm comm}^2(a)$
such that $a+p\in U(R)$ and $ap\in \sqrt{J(R)}$. Write $u=a+p$ and
$e=1-p$. Then, $ep=pe=0$ and $ae=ue=eu=ea$. Since $a-a^{n+1}\in
\sqrt{J(R)}$, it must be that $$(a-a^{n+1})e=(u-u^{n+1})e\in \sqrt{J(R)},$$ which
implies that $(1-u^n)e\in \sqrt{J(R)},$ and so
$$(a^n-e)e=u^ne-e=(u^n-1)e\in \sqrt{J(R)}.$$ As $ap\in \sqrt{J(R)}$, we get
$(a^n-e)(1-e)=(ap)^n\in \sqrt{J(R)}.$ Therefore,
$$a^n-e=(a^n-e)e+(a^n-e)(1-e)\in \sqrt{J(R)}.$$ To show that the
idempotent $e$ above is unique, we assume that there exists
$f^2=f\in {\rm comm}(a)$ satisfying $a^n-f\in \sqrt{J(R)}.$ Set
$a^n-f=b.$ Then, $a^nf=(f+b)f=(2f-1+b)f$, where $2f-1+b\in R^{-1}$ as
$(2f-1)^2=1$ and $fb=bf$. Thus, $f=(2f-1+b)^{-1}a^nf$. From $e\in
{\rm comm}^2(a)$, one has $(1-e)f$ is an idempotent. However,
$$(1-e)f=(1-e)[(2f-1+b)^{-1}a^nf]=(a^n-e)(1-e)(2f-1+b)^{-1}f\in
\sqrt{J(R)}$$ since $a^n-e\in \sqrt{J(R)}$ and $e,f,a,b$ commute with
one another. This proves that $(1-e)f=0$. Similarly, we can show
that $(1-f)e=0$. Hence, $e=fe=ef=f,$ as desired.

$(2)\Rightarrow(3)$. Assume that $(2)$ holds. Put
$y=(1+a^n-e)^{-1}a^{n-1}e$. Then, one inspects that
$$ya=(1+a^n-e)^{-1}a^{n}e=(1+a^n-e)^{-1}(a^{n}+1-e)e=e=ay,$$
$yay=ye=y$ and $a^n-ay=a^n-e\in \sqrt{J(R)}$. Suppose now that there
exists another $z\in R$ such that $za=az$, $z=zaz$ and $a^n-az\in
\sqrt{J(R)}$. Let $f=az.$ Then, $f\in {\rm comm}(a)$ and
$a^n-f=a^n-az\in \sqrt{J(R)}$. By hypothesis, $e=f.$ So,
$az=za=ya=ay,$ which assures $y=yay=zay=zaz=z$, as expected.

$(3)\Rightarrow(2)$. This implication follows in a similar manner to
that of $(2)\Rightarrow(3)$.

$(2)\Rightarrow(1)$. Using Lemma \ref{1-1}, it suffices to show that
$e\in {\rm comm}^2(a)$. For any $x\in {\rm comm}(a),$ set
$p=e+ex(1-e)$. Evidently, $p^2=p$ and $p\in {\rm comm}(a)$. Since
$a^n-e=:c\in \sqrt{J(R)},$ we obtain $a^n=(1-e)+(2e-1+c)$ and
$(a^n-e)^k\in J(R)$ for some integer $k$. It, therefore, follows that
$a^ne=(2e-1+c)e$ and $a^{nk}(1-e)=(a^n-e)^k(1-e)\in J(R).$ Write
$v=2e-1+c$. Then, $v\in U(R)$ and $e=v^{-1}a^ne.$ Since $ax=xa$ and
$a^{nk}(1-e)\in J(R),$ we conclude that
$$ex(1-e)=e^kx(1-e)=v^{-k}a^{nk}ex(1-e)=v^{-k}exa^{nk}(1-e)\in J(R),$$
which insures that
$$a^n-p=(a^n-e)-ex(1-e)\in \sqrt{J(R)}.$$ By assumption, $e=p$.
Thus, we obtain $ex=exe.$ Similarly, we can obtain $xe=exe$. Therefore,
$ex=xe$, and so $e\in {\rm comm}^2(a)$, as required.
\end{proof}

By the usage of \cite[Theorem 3.1]{Mosic21}, \cite[Theorem
3.2]{Wang12} and the proof of Theorem \ref{1-3}, we provide a
transparent connection between a $p$-Drazin inverse and a
$pns$-Drazin inverse in a ring as follows.

\begin{corollary}\label{1-4}
Let $a,y\in R$, $e=ay$ and $n\in \mathbb{N}$. The following are
equivalent$:$
\par$(1)$  $y\in {\rm comm}^2(a)$, $yay=y$ and $a^n-ay\in
\sqrt{J(R)}$ $($i.e., $a\in R^{\tiny{\textcircled{\qihao D}}}$  with
$a^{\tiny{\textcircled{\qihao D}}}=y)$.
\par$(2)$ $e^2=e \in  {\rm comm}^2(a)$ and $a^n-e\in
\sqrt{J(R)}$.
\par$(3)$ $y\in {\rm comm}^2(a)$, $yay=y$, $a-a^2y\in
\sqrt{J(R)}$ and $a-a^{n+1}\in \sqrt{J(R)}$.
\par$(4)$ $(1-e)^2=1-e \in  {\rm comm}^2(a)$, $a+(1-e)\in U(R)$, $a(1-e)\in
\sqrt{J(R)}$ and $a-a^{n+1}\in \sqrt{J(R)}$.
\end{corollary}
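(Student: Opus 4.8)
The plan is to prove that each of (1)--(4) is equivalent to the single statement ``$a\in R^{\tiny{\textcircled{\qihao D}}}$ with $a^{\tiny{\textcircled{\qihao D}}}=y$'', which is exactly what (1) abbreviates, so I will treat (1) as a hub. The tools are Lemma~\ref{1-1} together with the remark following it (which records $a^{\Pi}=aa^{\tiny{\textcircled{\qihao D}}}\in{\rm comm}^2(a)$, the formula $a^{\tiny{\textcircled{\qihao D}}}=(1+a^n-a^{\Pi})^{-1}a^{n-1}a^{\Pi}$, and $a^{\tiny{\textcircled{\qihao D}}}a^{\Pi}=a^{\Pi}a^{\tiny{\textcircled{\qihao D}}}=a^{\tiny{\textcircled{\qihao D}}}$), the pseudo-polar characterization of $p$-Drazin invertibility from \cite[Theorem 3.2]{Wang12}, Lemma~\ref{1-2}, and a re-run of the proof of Theorem~\ref{1-3} in which one additionally carries the finer ${\rm comm}^2$-membership and the explicit formula for $a^{\tiny{\textcircled{\qihao D}}}$. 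Throughout, the element $e=ay$ will play the role of the idempotent $a^{\Pi}$.

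For $(1)\Leftrightarrow(2)$: if (1) holds then $e=ay$ is idempotent, since $yay=y$ forces $(ay)^2=a(yay)=ay$; it lies in ${\rm comm}^2(a)$ as a product of $a,y\in{\rm comm}^2(a)$; and $a^n-e=a^n-ay\in\sqrt{J(R)}$ is assumed, so (2) holds, with $e=a^{\Pi}$. Conversely, if (2) holds, Lemma~\ref{1-1} gives $a\in R^{\tiny{\textcircled{\qihao D}}}$ with $aa^{\tiny{\textcircled{\qihao D}}}=e=ay$, and together with $e=ay$ the formula $a^{\tiny{\textcircled{\qihao D}}}=(1+a^n-e)^{-1}a^{n-1}e$ pins down $y$ as $a^{\tiny{\textcircled{\qihao D}}}$, which is (1). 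For $(1)\Leftrightarrow(3)$ and $(2)\Leftrightarrow(4)$ I would imitate the proof of Theorem~\ref{1-3}, inserting three observations. First, the clauses ``$y\in{\rm comm}^2(a)$, $yay=y$, $a-a^2y\in\sqrt{J(R)}$'' of (3) say exactly that $y$ is a $p$-Drazin inverse of $a$, and whenever $a\in R^{\tiny{\textcircled{\qihao D}}}$ one has $a-a^2a^{\tiny{\textcircled{\qihao D}}}=a(1-a^{\Pi})$ with $[a(1-a^{\Pi})]^n=(a^n-a^{\Pi})(1-a^{\Pi})\in\sqrt{J(R)}$, so $a(1-a^{\Pi})\in\sqrt{J(R)}$ by Lemma~\ref{1-2}(4); thus $a^{\tiny{\textcircled{\qihao D}}}$ is itself a $p$-Drazin inverse of $a$, and by uniqueness $y=a^{\tiny{\textcircled{\qihao D}}}$ in (3). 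Second, applying \cite[Theorem 3.2]{Wang12} to the ($p$-Drazin invertible) element $a$ produces an idempotent $1-e\in{\rm comm}^2(a)$ with $a+(1-e)\in U(R)$ and $a(1-e)\in\sqrt{J(R)}$, which is precisely the pseudo-polar data demanded in (4). Third, the implication ``$a^n-e\in\sqrt{J(R)}\Rightarrow a-a^{n+1}\in\sqrt{J(R)}$'' is already contained in the proof of Theorem~\ref{1-3} (via $a-a^{n+1}=a(1-e)-a(a^n-e)$), whereas the converse uses that $ae$ is a unit of the corner ring $eRe$ --- because $(a^n-e)e=(a^n-1)e\in\sqrt{J(R)}$ --- to extract $(1-a^n)e\in\sqrt{J(R)}$ from $(a-a^{n+1})e=ae\,(1-a^n)e$, after which $a^n-e=(a^n-1)e+a^n(1-e)\in\sqrt{J(R)}$.

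I expect the main obstacle to be organizational rather than conceptual: one must juggle the idempotents $e=ay=a^{\Pi}$ and $1-e$ and the Peirce decomposition of $R$ relative to $e$, repeatedly invoking Lemma~\ref{1-2} to move units and elements of $\sqrt{J(R)}$ past one another, and must verify $ae\in U(eRe)$ in order to recover $a^n-e\in\sqrt{J(R)}$ from the two ``$p$-Drazin'' conditions. Once the computation of Theorem~\ref{1-3} has been reproduced with the extra ${\rm comm}^2$-information and the explicit formula for $a^{\tiny{\textcircled{\qihao D}}}$ carried along, the only remaining point in each reverse implication is the identification $y=a^{\tiny{\textcircled{\qihao D}}}$, and this is forced by the uniqueness of the $p$-Drazin (hence of the $pns$-Drazin) inverse.
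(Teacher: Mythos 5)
Your overall strategy coincides with what the paper intends: the paper gives no proof of this corollary, only the remark that it follows from Lemma~\ref{1-1} (Mosi\'{c}'s Theorem 3.1), the pseudo-polar characterization of $p$-Drazin invertibility in \cite[Theorem 3.2]{Wang12}, and the proof of Theorem~\ref{1-3} --- precisely the three tools you deploy. Your treatment of $(1)\Leftrightarrow(3)$ is sound (in particular, identifying $y=a^{\tiny{\textcircled{\qihao D}}}$ through the uniqueness of the $p$-Drazin inverse is the right mechanism), as is the passage between the two ``$e$-level'' conditions $(2)$ and $(4)$ via the decomposition $a^n-e=(a^n-1)e+[a(1-e)]^n$ taken from the proof of Theorem~\ref{1-3}.

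There is, however, one step that fails as written: the recovery of $y$ in $(2)\Rightarrow(1)$. From $(2)$ and Lemma~\ref{1-1} you correctly obtain $a\in R^{\tiny{\textcircled{\qihao D}}}$ with $aa^{\tiny{\textcircled{\qihao D}}}=e=ay$, but the equality $ay=aa^{\tiny{\textcircled{\qihao D}}}$ does not ``pin down'' $y$: one cannot cancel $a$. Concretely, take $a=0$ and $y=1$; then $e=ay=0$ satisfies all of $(2)$, yet $yay=0\neq y$, so $(1)$ fails. The same problem affects $(4)\Rightarrow(1)$ and $(4)\Rightarrow(3)$. This is really an imprecision inherited from the statement itself --- conditions $(2)$ and $(4)$ constrain only the idempotent $e$, not $y$, so they cannot literally force $a^{\tiny{\textcircled{\qihao D}}}=y$ --- and the intended reading (compare Corollary~\ref{1111}) is that $(2)$ and $(4)$ characterize ``$a\in R^{\tiny{\textcircled{\qihao D}}}$ with $a^{\Pi}=e$,'' the inverse then being $y=(1+a^n-e)^{-1}a^{n-1}e$ by Lemma~\ref{1-1}. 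If you adopt that reading, or reinstate the clauses $y\in{\rm comm}^2(a)$ and $yay=y$ in $(2)$ and $(4)$, your argument closes; as it stands, the ``pins down $y$'' sentence is a genuine non sequitur that must either be repaired or flagged as a defect of the statement. (A small presentational point: your parenthetical justification for $ae\in U(eRe)$ is misplaced --- the reason is $ae=(a+1-e)e$ with $a+1-e\in U(R)$ commuting with $e$, not the containment $(a^n-1)e\in\sqrt{J(R)}$.)
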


\begin{example}Let $F$ be a field and $R={\rm M}_k(F)$. Then,
$\sqrt{J(R)}=R^{ \rm nil}$. One infers from \cite[Theorem
2.1]{Wang12} that all matrices in $R$ are $p$-Drazin invertible. But, a simple check shows that
any matrix $A\in R$ satisfying the relation $A-A^{n+1}\notin R^{ \rm nil}$ is not
$pns$-Drazin invertible, as needed.
\end{example}

\begin{corollary}\label{1-5}
Let $a \in R$ and $n\in \mathbb{N}$. The following are equivalent$:$
\par$(1)$ The element $a$ is $pns$-Drazin invertible.
\par$(2)$ There exists $p^2=p\in {\rm comm}^2(a)$ such that $ap\in \sqrt{J(R)}$ and $1-(a+p)^n\in \sqrt{J(R)}.$
\end{corollary}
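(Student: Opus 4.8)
The plan is to deduce Corollary \ref{1-5} from the already-established Theorem \ref{1-3}, since condition (2) of Corollary \ref{1-5} is essentially the characterization of $p$-Drazin invertibility (via \cite[Theorem 3.2]{Wang12}, i.e.\ pseudo-polarity) strengthened by a condition which should match ``$a-a^{n+1}\in\sqrt{J(R)}$''. Concretely, I would show $(1)\Leftrightarrow(2)$ by routing through the equivalence $(1)\Leftrightarrow(4)$ of Theorem \ref{1-3}: $a$ is $pns$-Drazin invertible iff $a$ is $p$-Drazin invertible and $a-a^{n+1}\in\sqrt{J(R)}$, and $a$ is $p$-Drazin invertible iff there exists $p^2=p\in{\rm comm}^2(a)$ with $a+p\in U(R)$ and $ap\in\sqrt{J(R)}$. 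So the real content is to translate ``$a+p\in U(R)$ and $a-a^{n+1}\in\sqrt{J(R)}$'' into ``$1-(a+p)^n\in\sqrt{J(R)}$'' (with the same idempotent $p$), and conversely.

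For $(1)\Rightarrow(2)$: assuming $a$ is $pns$-Drazin invertible, Theorem \ref{1-3} gives $p$-Drazin invertibility, hence by \cite[Theorem 3.2]{Wang12} an idempotent $p\in{\rm comm}^2(a)$ with $a+p\in U(R)$, $ap\in\sqrt{J(R)}$, and it also gives $a-a^{n+1}\in\sqrt{J(R)}$. Set $u=a+p$, $e=1-p$; then $ep=pe=0$, $ae=ea=ue=eu$, exactly as in the proof of $(4)\Rightarrow(2)$ in Theorem \ref{1-3}. From there one computes $(a-a^{n+1})e=(u-u^{n+1})e\in\sqrt{J(R)}$, so $u(1-u^n)e\in\sqrt{J(R)}$; since $u\in U(R)$ commutes with this element, Lemma \ref{1-2}(4) yields $(1-u^n)e\in\sqrt{J(R)}$. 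On the complementary part, $(1-u^n)p=(1-(a+p)^n)p$; but modulo $ap\in\sqrt{J(R)}$ one has $(a+p)^n\equiv a^n+p\pmod{\sqrt{J(R)}}$ — expanding the binomial and noting every cross term contains a factor $ap$ (all the relevant elements commute, so Lemma \ref{1-2}(1),(2) applies) — hence $(1-(a+p)^n)p\equiv (1-a^n-p)p=-a^np\equiv 0\pmod{\sqrt{J(R)}}$ since $a^np$ is a commuting multiple of $ap$. Adding the two pieces via Lemma \ref{1-2}(2) gives $1-(a+p)^n\in\sqrt{J(R)}$.

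For $(2)\Rightarrow(1)$: given $p^2=p\in{\rm comm}^2(a)$ with $ap\in\sqrt{J(R)}$ and $1-(a+p)^n\in\sqrt{J(R)}$. First note $1-(a+p)^n\in\sqrt{J(R)}$ forces $(a+p)^n$, hence $a+p$, to be a unit: if $w=a+p$ then $w\cdot w^{n-1}=w^n=1-(1-w^n)$ with $1-w^n\in\sqrt{J(R)}$, so by Lemma \ref{1-2}(3) $w^n\in U(R)$, and since $w$ commutes with $w^{n-1}$ this makes $w\in U(R)$. Thus by \cite[Theorem 3.2]{Wang12} $a$ is $p$-Drazin invertible. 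It remains to check $a-a^{n+1}\in\sqrt{J(R)}$. With $e=1-p$, $u=a+p$ as before, $ae=ea=ue=eu$; from $1-u^n\in\sqrt{J(R)}$ and commutativity we get $(1-u^n)e\in\sqrt{J(R)}$ by Lemma \ref{1-2}(1), i.e.\ $e-a^ne\in\sqrt{J(R)}$ (using $u^ne=a^ne$), so $ae-a^{n+1}e\in\sqrt{J(R)}$, i.e.\ $(a-a^{n+1})e\in\sqrt{J(R)}$; meanwhile $(a-a^{n+1})p=(a-a^{n+1})(1-e)$ lies in $\sqrt{J(R)}$ because $a-a^{n+1}=a(1-a^n)$ and $(a-a^{n+1})p$ is a commuting multiple of $ap\in\sqrt{J(R)}$. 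Summing, $a-a^{n+1}\in\sqrt{J(R)}$, and Theorem \ref{1-3}$(4)\Rightarrow(1)$ finishes the argument.

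The main obstacle — really the only nontrivial point — is the congruence $(a+p)^n\equiv a^n+p\pmod{\sqrt{J(R)}}$, which must be handled carefully: one expands $(a+p)^n=\sum_{k=0}^n\binom{n}{k}a^kp^{n-k}$, uses $p^{n-k}=p$ for $k<n$ and $p^0=1$ for $k=n$, collects the terms with $1\le k\le n-1$ into $\big(\sum_{k=1}^{n-1}\binom{n}{k}a^k\big)p$, which is a commuting multiple of $ap\in\sqrt{J(R)}$ hence in $\sqrt{J(R)}$ by Lemma \ref{1-2}(1); the $k=0$ term is $p$ and the $k=n$ term is $a^n$, with $a^np$ also absorbed since it is a commuting multiple of $ap$. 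After that, all remaining manipulations are routine applications of Lemma \ref{1-2} and the mutual commutativity of $a,p,e,u$, exactly in the style of the proof of Theorem \ref{1-3}.
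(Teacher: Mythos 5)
Your proof is correct and follows essentially the same route as the paper: both reduce to Corollary \ref{1-4} (equivalently, Theorem \ref{1-3}(4) combined with Wang--Chen's pseudo-polarity criterion) and then translate between $1-(a+p)^n\in\sqrt{J(R)}$ and $a-a^{n+1}\in\sqrt{J(R)}$ by a binomial expansion in which all cross terms are commuting multiples of $ap\in\sqrt{J(R)}$. The only (cosmetic) difference is bookkeeping: you split the computation along $1=e+p$ and use the congruence $(a+p)^n\equiv a^n+p \pmod{\sqrt{J(R)}}$, whereas the paper records the single identity $u-u^{n+1}=(a-a^{n+1})-ap\cdot\sum_{i=1}^{n}C_{n+1}^{i}(ap)^{n-i}$ and divides by the unit $u$.
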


\begin{proof}
$(1)\Rightarrow(2)$. Let $e=aa^{\tiny{\textcircled{\qihao D}}}$ and
$p=1-e$. In virtue of Corollary \ref{1-4}, one has that $p^2=p\in {\rm comm}^2(a)$,
$a+p\in U(R)$, $ap\in \sqrt{J(R)}$ and $a-a^{n+1}\in \sqrt{J(R)}$.
Write $u=a+p$. By the classical Binomial Theorem, we may write
$$u-u^{n+1}=(a+p)-(a+p)^{n+1}=(a-a^{n+1})-ap\cdot\sum_{i=1}^{n}C_{n+1}^i(ap)^{n-i}\in \sqrt{J(R)}.$$
Since $u\in U(R)$, it follows that $1-u^{n}\in \sqrt{J(R)}$, as
required.

$(2)\Rightarrow(1)$. From $1-(a+p)^n\in \sqrt{J(R)},$ one has
$(a+p)-(a+p)^{n+1}\in \sqrt{J(R)}$, and $a+p\in U(R)$ as $(a+p)^n\in
1+\sqrt{J(R)}\subseteq U(R).$ Utilizing Corollary \ref{1-4}, we only need
to show $a-a^{n+1}\in \sqrt{J(R)}$. Similarly to the above proof, we can get
$$a-a^{n+1}=[(a+p)-(a+p)^{n+1}]+ap\cdot\sum_{i=1}^{n}C_{n+1}^i(ap)^{n-i}\in
\sqrt{J(R)}$$ since $ap\in \sqrt{J(R)}.$ Thus, $(1)$ follows, as promised.
\end{proof}

\medskip

%%%%%%%%%%%%%%%%%%%%%%%%%%%%%%%%%%%%%%%%%%%%%%%%%%%%%%%%%%%%%%%
%%%%%%%%%%%%%%%%%%%%%%%%%%%%%%%%%%%%%%%%%%%%%%%%%%%%%%%%%%%%%%%

\medskip

\section{\bf $pns$-Drazin inverses with involution}

This section focuses on the study of a class of $pns$-Drazin
inverses in a $*$-ring. Recall once again that a ring $R$ is a \emph{$*$-ring}
(or a \emph{ring with involution}) if there is a map $*:R\rightarrow
R$ such that for all $x,~y\in R,$ $(x+y)^*=x^*+y^*,~(xy)^*=y^*x^*$ and $(x^*)^*=x$.

\begin{definition}
Let $R$ be a $*$-ring and $n\in  \mathbb{N}$. An element $a\in R$ is
called $pns$-$*$-Drazin invertible if there exists $x\in R$ such
that
\begin{center}
 $x\in {\rm comm}^{2}(a)$,~~   ~~$xax=x$,~~  ~~$(ax)^{\ast}=ax$,~~   ~~$a^{n}-ax\in \sqrt{J(R)}.$
\end{center}
In this case, $x$ is called a $pns$-$*$-Drazin inverse of $a$.
\end{definition}

For a $*$-ring $R$ and $n\in \mathbb{N}$. The $pns$-$*$-Drazin
inverse of $a\in R$ will be denoted by $a^\diamond$; and the set of
all $pns$-$*$-Drazin invertible elements of $R$ is denoted by
$R^\diamond$. Apparently, $R^\diamond\subseteq
R^{\tiny{\textcircled{\qihao D}}}$.

\begin{remark}
It is clear that $a\in R^\diamond$ if, and only if, $a\in
R^{\tiny{\textcircled{\qihao D}}}$ and
$(aa^{\tiny{\textcircled{\qihao
D}}})^*=aa^{\tiny{\textcircled{\qihao D}}}$. So, by the uniqueness
of the $pns$-Drazin inverse, one gets that the $pns$-$*$-Drazin
inverse is unique whenever it exists.
\end{remark}

Recall that an element $p$ of a $*$-ring $R$ is a \emph{projection}
(cf. \cite{Ber72}) if $p$ is a self-adjoint idempotent (i.e., $p^2=p$ and
$p=p^*$).

\begin{proposition}\label{1-10}
Let $R$ be a $*$-ring, $a\in R$ and $n\in \mathbb{N}$. The following
are equivalent$:$
\par$(1)$ The element $a$ is $pns$-$*$-Drazin invertible.
\par$(2)$ There exists a projection $p\in {\rm comm}^2{(a)}$ such that $a^n-p\in
\sqrt{J(R)}.$
\end{proposition}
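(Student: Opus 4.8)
The plan is to reduce the whole statement to Lemma \ref{1-1} together with the Remark that characterises $R^\diamond$ inside $R^{\tiny{\textcircled{\qihao D}}}$. The guiding observation is that the extra $*$-requirement $(ax)^\ast=ax$ in the definition of $pns$-$*$-Drazin invertibility is nothing but the assertion that the canonical idempotent $aa^{\tiny{\textcircled{\qihao D}}}=a^\Pi$ produced by Lemma \ref{1-1} is self-adjoint, i.e.\ a projection. So there is essentially one idea and the rest is bookkeeping; I do not expect a genuine obstacle here.

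For the implication $(1)\Rightarrow(2)$ I would start from $a\in R^\diamond$ with $pns$-$*$-Drazin inverse $x$. Since $R^\diamond\subseteq R^{\tiny{\textcircled{\qihao D}}}$ and, by the Remark, $x$ is also the $pns$-Drazin inverse $a^{\tiny{\textcircled{\qihao D}}}$, Lemma \ref{1-1} already hands me an idempotent $e:=ax=aa^{\tiny{\textcircled{\qihao D}}}=a^\Pi$ with $e\in{\rm comm}^2(a)$ and $a^n-e\in\sqrt{J(R)}$. The defining relation $(ax)^\ast=ax$ then reads $e^\ast=e$, so $e$ is a projection, and I simply take $p:=e$.

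For $(2)\Rightarrow(1)$ I would reverse this: given a projection $p\in{\rm comm}^2(a)$ with $a^n-p\in\sqrt{J(R)}$, Lemma \ref{1-1} (in the direction $(2)\Rightarrow(1)$, applied with $e=p$) shows that $a$ is $pns$-Drazin invertible and, by its ``in this case'' clause, $p=aa^{\tiny{\textcircled{\qihao D}}}$. (If one prefers not to lean on that clause, the uniqueness of the commuting idempotent from Theorem \ref{1-3}(2) identifies $p$ with $a^\Pi$ just as well.) Consequently $(aa^{\tiny{\textcircled{\qihao D}}})^\ast=p^\ast=p=aa^{\tiny{\textcircled{\qihao D}}}$, and the Remark immediately yields $a\in R^\diamond$. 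The only point worth stating carefully in the write-up is why $e=ax$ lands in ${\rm comm}^2(a)$ in the first direction — but this is immediate since ${\rm comm}^2(a)$ is a subring containing both $a$ and $x$ — so no part of the argument should cause trouble.
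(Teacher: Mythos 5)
Your proposal is correct and follows essentially the same route as the paper: both directions reduce to Lemma \ref{1-1}, identifying the canonical idempotent $aa^{\tiny{\textcircled{\qihao D}}}$ with the projection $p$ (the paper just writes out the explicit inverse $x=(1+a^n-p)^{-1}a^{n-1}p$ and computes $ax=p$ where you invoke the ``in this case'' clause, which is the same content). No gaps.
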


\begin{proof}
$(1)\Rightarrow(2)$. Clearly, $a$ is $pns$-Drazin invertible. Write
$p=aa^{\tiny{\textcircled{\qihao D}}}$. Employing Lemma \ref{1-1},
we have $p^2=p\in {\rm comm}^2{(a)}$ and $a^n-p\in \sqrt{J(R)}$. As
$(aa^{\tiny{\textcircled{\qihao
D}}})^*=aa^{\tiny{\textcircled{\qihao D}}}$, we infer that $p$ is a projection.

$(2)\Rightarrow(1)$. Let $x=(1+a^n-p)^{-1}a^{n-1}p.$ Applying Lemma
\ref{1-1}, we conclude that $x$ is the $pns$-Drazin inverse of $a$. Notice that
$$ax=a^{n}(1+a^n-p)^{-1}p=(a^{n}+1-p)(1+a^n-p)^{-1}p=p.$$ But, since $p$ is
a projection, it follows that $(ax)^*=ax$. So, $x$ is the
$pns$-$*$-Drazin inverse of $a$, as pursued.
\end{proof}

\begin{example}\label{3-3} Let $R$ be a $*$-ring and $n\in \mathbb{N}$.
\par$(1)$ Elements of $\sqrt{J(R)}$ are $pns$-$*$-Drazin invertible. In particular, nilpotents and elements of $J(R)$ are $pns$-$*$-Drazin invertible.
\par$(2)$ Idempotents of $R$ are $pns$-Drazin invertible, and an idempotent $e$ is $pns$-$*$-Drazin invertible if, and only if, $e$ is a projection.
\end{example}

\begin{proof}
$(1)$ As $0$ is a projection, the result follows directly from Proposition
\ref{1-10}.

$(2)$ Idempotents are obviously $pns$-Drazin invertible. Further,
assume that $e$ is $pns$-$*$-Drazin invertible. Then, owing to Proposition
\ref{1-10}, there exists a projection $p\in {\rm comm}^2(e)$ such
that $e^n-p=e-p\in \sqrt{J(R)}$. However, $ep=pe$ ensures
$(e-p)^3=e-p$. Thus, $e=p$. The converse is easy to be obtained.
\end{proof}

%By a similar arguments to that of \cite[Theorem3.2]{Zou}, we have
%the following result.

%\begin{proposition}
%Let $R$ be a $*$-ring, $a\in R$ and $m,n\in \mathbb{N}$. The
%following are equivalent$:$
%\par$(1)$ $a\in R^\diamond$.
%\par$(2)$ There exists $x\in {\rm comm}^{2}(a)$ such that $xax=x$, $(ax)^{\ast}=ax$, $a-a^{n+2}x\in
%\sqrt{J(R)}$.
%\par$(3)$ There exists $x\in {\rm comm}^{2}(a)$ such that $xax=x$, $(ax)^{\ast}=ax$, $a^{m}-a^{n+m+1}x\in
%\sqrt{J(R)}$.
%\par$(4)$ There exists $x\in {\rm comm}^{2}(a)$ such that $xax=x$, $(ax)^{\ast}=ax$, $a^{n+m-1}-a^{m}x\in
%\sqrt{J(R)}$.
%\end{proposition}

On the other vein, Cline \cite{Cline} showed that, for any $a,b$ of a ring $R$, the product $ab$ is Drazin invertible if, and only if, $ba$ is Drazin invertible and $(ba)^D=b((ab)^D)^2a$, where $(ba)^D$ (resp., $(ab)^D$) is the Drazin inverse of $ba$ (resp., $ab$). This equation is known as the Cline's formula. That formula was studied for various aspects of generalized inverses in e.g., \cite{Lian,Liao,Mosic21,Wang12}; see also the bibliography therewith.

\begin{lemma}\label{3-1}
\cite[Corollary3.2]{Mosic21} Let $R$ be a  ring and $a$, $b$, $c\in
R$. If $aba=aca$, then $ac\in R^{\tiny{\textcircled{\qihao D}}}$ if, and only if, $ba\in R^{\tiny{\textcircled{\qihao D}}}$. In this case, $(ac)^{\tiny{\textcircled{\qihao
D}}}=a[(ba)^{\tiny{\textcircled{\qihao D}}}]^{2}c$ and
$(ba)^{\tiny{\textcircled{\qihao
D}}}=b[(ac)^{\tiny{\textcircled{\qihao D}}}]^{2}a$.
\end{lemma}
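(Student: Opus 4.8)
The statement to prove is Lemma~\ref{3-1} (quoted from \cite[Corollary~3.2]{Mosic21}): for $a,b,c\in R$ with $aba=aca$, the element $ac$ is $pns$-Drazin invertible if and only if $ba$ is, and in that case $(ac)^{\tiny{\textcircled{\qihao D}}}=a[(ba)^{\tiny{\textcircled{\qihao D}}}]^{2}c$ and symmetrically. By symmetry of the hypothesis in the pairs $(a,c)$ and $(b,a)$ (one checks $a(ba)a=a(ca)a$ is essentially the same relation), it suffices to prove one direction: assume $ba\in R^{\tiny{\textcircled{\qihao D}}}$ and produce the $pns$-Drazin inverse of $ac$ with the claimed formula.

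The plan is to use the characterization of Theorem~\ref{1-3}, specifically the equivalence $(1)\Leftrightarrow(4)$: an element $t$ is $pns$-Drazin invertible iff $t$ is $p$-Drazin invertible and $t-t^{n+1}\in\sqrt{J(R)}$. So I would proceed in two independent pieces. First, the $p$-Drazin part: since $p$-Drazin invertibility satisfies Cline's formula (this is \cite[Theorem~3.2]{Wang12} or can be cited from the $p$-Drazin literature referenced in the introduction), the relation $aba=aca$ already gives that $ac$ is $p$-Drazin invertible with $(ac)^{p\text{-}D}=a[(ba)^{p\text{-}D}]^{2}c$. Second, the "$t-t^{n+1}\in\sqrt{J(R)}$" part: I need $ac-(ac)^{n+1}\in\sqrt{J(R)}$ given $ba-(ba)^{n+1}\in\sqrt{J(R)}$. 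Here the key elementary identity is $(ac)^{k+1}=a(ba)^{k-1}(ac)\cdot(\text{something})$; more cleanly, from $aba=aca$ one derives by induction $(ac)^{k+1}=a(ca)^{k-1}ca = a(ba)^{k-1}ca$ and $c(ac)^{k}a = (ca)^{k+1} = (ba)^{\ast}$-type rewriting — the standard Cline bookkeeping shows $(ac)^{m}=a(ba)^{m-2}bac\cdot c^{-1}$... rather, the clean fact is: for all $m\ge 2$, $(ac)^{m} = a\,(ba)^{m-1}\,c$. Indeed $(ac)^2 = a(ca)c = a(ba)c$ using $aca=aba$ wait — one must be careful: $aca=aba$ gives $(ac)(ac)=a(ca)c$ and $ca$ need not equal $ba$, but $a(ca)=aba\cdot a^{-1}$... the correct route is $(ac)^{m}=a(ba)^{m-1}c$ proved by induction using $(ac)^{m+1}=(ac)^m(ac)=a(ba)^{m-1}c\cdot ac = a(ba)^{m-1}(ca)c$ and then $a(ba)^{m-1}(ca) = a(ba)^{m-2}(ba)(ca) = a(ba)^{m-2}b(aca) = a(ba)^{m-2}b(aba) = a(ba)^{m}$. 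Hence $(ac)^{m}=a(ba)^{m-1}c$ for $m\ge 1$ (checking $m=1$: $a(ba)^0c=ac$, fine). From this, $ac-(ac)^{n+1} = a c - a(ba)^{n}c = a(1-(ba)^n)c$, and since $ba-(ba)^{n+1}=(1-(ba)^n)(ba)\cdot(\text{commuting})\in\sqrt{J(R)}$ together with $a\cdot,\cdot c$ multiplications and the behaviour of $\sqrt{J(R)}$ under such sandwiching (using that $(a(1-(ba)^n)c)^{n+1}$ can be collapsed back to $a(1-(ba)^n)^{n+1}(ba)^{n\cdot n}c$-type expressions inside $J(R)$ via Lemma~\ref{1-2}), one concludes $ac-(ac)^{n+1}\in\sqrt{J(R)}$.

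For the explicit inverse formula, I would then verify directly that $x:=a[(ba)^{\tiny{\textcircled{\qihao D}}}]^{2}c$ satisfies the three defining conditions of a $pns$-Drazin inverse of $ac$: (i) $x\in{\rm comm}^2(ac)$, (ii) $x(ac)x=x$, (iii) $(ac)^n - (ac)x\in\sqrt{J(R)}$. Conditions (i) and (ii) are the classical Cline computations, which only use the algebraic identity $aba=aca$ together with $(ba)^{\tiny{\textcircled{\qihao D}}}\in{\rm comm}^2(ba)$ and the absorption $(ba)^{\tiny{\textcircled{\qihao D}}}(ba)(ba)^{\tiny{\textcircled{\qihao D}}}=(ba)^{\tiny{\textcircled{\qihao D}}}$; the double-commutant claim requires the standard trick of showing anything commuting with $ac$ commutes with $ba$ (via multiplying by $a$ on one side, $c$ or $b$ on the other) and hence with $(ba)^{\tiny{\textcircled{\qihao D}}}$. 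For (iii), one computes $(ac)x = a(ba)^{\tiny{\textcircled{\qihao D}}}c\cdot\ldots = a\,(ba)^{\Pi}\,(ba)^{\tiny{\textcircled{\qihao D}}}\ldots$; the upshot is $(ac)x = a[(ba)^{\tiny{\textcircled{\qihao D}}}](ba)c$-reduced to an expression whose comparison with $(ac)^n=a(ba)^{n-1}c$ lands in $\sqrt{J(R)}$ precisely because $(ba)^n-(ba)(ba)^{\tiny{\textcircled{\qihao D}}}\in\sqrt{J(R)}$ (the defining condition for $ba$), with the $a\,(\cdot)\,c$ sandwich handled by Lemma~\ref{1-2}(1),(4).

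The main obstacle I anticipate is bookkeeping rather than conceptual: the double-commutant condition and the passage of the $\sqrt{J(R)}$-membership through the non-invertible "change of variables" $t\mapsto a t c$ both require care, since $\sqrt{J(R)}$ is not obviously closed under $t\mapsto atc$ for arbitrary $a,c$ — one genuinely needs to exploit that the relevant element is of the form $a(1-(ba)^n)c$ and that its powers collapse (using $aba=aca$ repeatedly) to something of the form $a\cdot r\cdot c$ with $r\in J(R)$, after which Lemma~\ref{1-2} finishes it. Given the length, in the paper I would likely just cite \cite[Corollary~3.2]{Mosic21} verbatim, as the excerpt already does, and only spell out the reductions above if a self-contained argument is wanted.
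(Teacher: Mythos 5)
The paper does not prove this statement at all: Lemma~\ref{3-1} is imported verbatim from \cite[Corollary~3.2]{Mosic21}, so there is no internal proof to compare against, and your closing instinct (``just cite it'') is exactly what the authors do. Judged on its own terms, your sketch follows the standard Cline-formula route and its load-bearing identity is correct: $(ac)^{m}=a(ba)^{m-1}c$ for all $m\ge 1$ does follow by the induction you indicate from $aba=aca$, and it yields the clean collapse
$$\bigl(ac-(ac)^{n+1}\bigr)^{k}=(ac)^{k}\bigl(1-(ac)^{n}\bigr)^{k}=a\,(ba)^{k-1}\bigl(1-(ba)^{n}\bigr)^{k}c=a\,\bigl[(ba)(1-(ba)^{n})\bigr]^{k-1}\bigl(1-(ba)^{n}\bigr)\,c,$$
which lies in $J(R)$ as soon as $k-1$ exceeds an exponent $l$ with $\bigl(ba-(ba)^{n+1}\bigr)^{l}\in J(R)$. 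This is precisely the rigorous form of the ``sandwich'' step you flagged as the main obstacle, and it closes that gap.

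One intermediate claim as literally written is false and must be repaired: it is \emph{not} true that every $y\in{\rm comm}(ac)$ commutes with $ba$ (take $b=c$ with $ab=0\ne ba$; everything commutes with $0$). The correct device --- which the present paper itself uses in the proof of Lemma~\ref{3-2}, see equation (1.1) there --- is that $y(ac)=(ac)y$ implies that the element $b(ac)ya$ commutes with $ba$, hence with $(ba)^{\tiny{\textcircled{\qihao D}}}$, and from this one extracts $y\,a[(ba)^{\tiny{\textcircled{\qihao D}}}]^{2}c=a[(ba)^{\tiny{\textcircled{\qihao D}}}]^{2}c\,y$ after a few multiplications by $a$, $b$, $c$ and use of $(ba)^{\tiny{\textcircled{\qihao D}}}=[(ba)^{\tiny{\textcircled{\qihao D}}}]^{2}(ba)$. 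Two smaller points: your first reduction leans on a Cline formula for $p$-Drazin inverses in the $aba=aca$ setting, which is not what \cite[Theorem~3.2]{Wang12} says and would itself need a citation or proof --- but it is also redundant, since your direct verification of the three defining conditions for $x=a[(ba)^{\tiny{\textcircled{\qihao D}}}]^{2}c$ already establishes both existence and the formula; and the ``symmetry'' dispatching of the converse direction should simply be the observation that the hypothesis $aba=aca$ is symmetric under swapping the roles of $(b,c)$ in the two products $ba$ and $ac$ only after rerunning the same argument with $ac$ in place of $ba$, which your formulas $(ba)^{\tiny{\textcircled{\qihao D}}}=b[(ac)^{\tiny{\textcircled{\qihao D}}}]^{2}a$ implicitly do.
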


The next construction reveals that the Cline's formula does {\it not} hold for $pns$-$*$-Drazin inverses.

\begin{example}\label{6-20}
Let $R={\rm M}_2(S)$, where $S$ is a commutative ring. An involution $*$ of $R$ is defined by the transpose of matrices. It is clear that $R$ is a $*$-ring. Let

\begin{center}
$A=\left(\begin{array}{cc}0&0\\1&1\end{array}\right)$,~
$B=\left(\begin{array}{cc}0&1\\0&0\end{array}\right)\in R$.
\end{center}
Then, $AB=\left(\begin{array}{cc}0&0\\0&1\end{array}\right)$ and
$BA=\left(\begin{array}{cc}1&1\\0&0\end{array}\right)$ are both
idempotents. Since $AB$ is a projection of $R$, by Example
\ref{3-3}(2) we deduce that $AB$ is $pns$-$*$-Drazin invertible. However, $BA$ is
not $pns$-$*$-Drazin invertible, because it is not a projection.
\end{example}

We are now prepared to establish the following statement.

\begin{theorem}\label{525-1}
Let $R$ be a $*$-ring and $a$, $b$, $c\in R$. If
$aba=ba^{2}=a^{2}c=aca$, then $ac\in R^\diamond$ if, and only if,
$ba\in R^\diamond$. In this case,
$(ac)^\diamond=a[(ba)^\diamond]^{2}c$ and
$(ba)^\diamond=b[(ac)^\diamond]^{2}a$.
\end{theorem}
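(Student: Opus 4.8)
The plan is to combine the $pns$-Drazin version of Cline's formula (Lemma \ref{3-1}) with a careful check that the extra $*$-condition $(xax)^* = xax$ --- equivalently, that the relevant idempotent is a projection --- transfers between $ac$ and $ba$. First I would invoke Lemma \ref{3-1}: since $aba = aca$ (a consequence of the hypotheses $aba = ba^2 = a^2c = aca$, which in particular give $aba = aca$), we already know that $ac \in R^{\tiny{\textcircled{\qihao D}}}$ iff $ba \in R^{\tiny{\textcircled{\qihao D}}}$, with the stated squaring formulae $(ac)^{\tiny{\textcircled{\qihao D}}} = a[(ba)^{\tiny{\textcircled{\qihao D}}}]^2 c$ and $(ba)^{\tiny{\textcircled{\qihao D}}} = b[(ac)^{\tiny{\textcircled{\qihao D}}}]^2 a$. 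So by Proposition \ref{1-10} it remains to show that the idempotent $(ac)(ac)^{\tiny{\textcircled{\qihao D}}}$ is a projection iff $(ba)(ba)^{\tiny{\textcircled{\qihao D}}}$ is a projection.

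The heart of the argument is a direct computation of these two idempotents in terms of one another. Assume $ba \in R^\diamond$, write $y = (ba)^\diamond$ and $p = (ba)y = (ba)^\Pi$, which by Proposition \ref{1-10} is a projection lying in ${\rm comm}^2(ba)$. Using $(ac)^{\tiny{\textcircled{\qihao D}}} = ay^2c$ one computes
\begin{equation*}
(ac)(ac)^{\tiny{\textcircled{\qihao D}}} = (ac)(ay^2c) = a\,(cay)\,yc,
\end{equation*}
and here is where the full strength of the hypotheses enters: from $a^2c = aca$ one gets relations allowing $cay$ to be rewritten, and by iterating $ba^2 = aba = aca = a^2 c$ together with $y \in {\rm comm}(ba)$ one expects to massage this into the form $a y (ba) y \, c = a\,y p\, c$ or similar, and ultimately into an expression of the shape $(a y c)$ conjugated appropriately. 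The cleanest route is probably to show $(ac)^\Pi = a y c \cdot(\text{something})$ and symmetrically that $(ba)^\Pi = b \cdot (\text{something}) \cdot a$ applied to $(ac)^\Pi$, so that each idempotent is a ``compression'' of the other; then self-adjointness of $p$ forces self-adjointness of $(ac)^\Pi$ because $*$ is an anti-automorphism and the expressions are built symmetrically under $a \leftrightarrow c^*$, $b \leftrightarrow b^*$ once one notes $ba^2 = a^2 c$ dualizes correctly under $*$. For the converse one runs the same computation with the roles of $ac$ and $ba$ interchanged, using the other squaring formula. Finally, the formulae $(ac)^\diamond = a[(ba)^\diamond]^2 c$ and $(ba)^\diamond = b[(ac)^\diamond]^2 a$ are then inherited verbatim from Lemma \ref{3-1}, since the $pns$-$*$-Drazin inverse coincides with the $pns$-Drazin inverse whenever the latter's associated idempotent happens to be a projection.

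The main obstacle I anticipate is the bookkeeping in the idempotent computation: showing that $(ac)^\Pi$ is self-adjoint is not automatic from $ab a = ba^2 = a^2 c = aca$ alone via Lemma \ref{3-1}, because that lemma only needed the single identity $aba = aca$, whereas the projection transfer genuinely requires the two ``one-sided'' identities $ba^2 = aba$ and $a^2 c = aca$ to interact with the involution. Concretely, the danger is that $(ac)^\Pi$ expressed through $(ba)^\Pi$ might not visibly be of the form $z^* z$ or $p z p$ for a projection $p$; I expect one must first establish an auxiliary identity such as $ac \cdot (ac)^\Pi = (ba)^\Pi$-conjugate and verify $((ac)^\Pi)^* (ac)^\Pi = ((ac)^\Pi)^*= (ac)^\Pi$ by a two-line manipulation exploiting $(xy)^* = y^* x^*$ and the hypothesis, taken $*$, namely $(a^*)^2 b^* = a^* b^* a^* = \dots$. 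Once the right auxiliary identity is isolated the rest is routine; pinning down that identity is the crux.
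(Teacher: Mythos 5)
Your overall strategy coincides with the paper's: invoke the $pns$-Drazin Cline formula (Lemma \ref{3-1}) to get the candidate inverses $a[(ba)^\diamond]^{2}c$ and $b[(ac)^\diamond]^{2}a$, and then check that the $*$-condition on the associated idempotent transfers. But the step you yourself call ``the crux'' is exactly the step you leave unproved, so as written this is an outline with a genuine gap, not a proof. You never produce the auxiliary identity that makes the projection property transfer, and the directions you sketch for finding it are off target: nothing in the statement involves $a^*,b^*,c^*$ individually, so applying $*$ to the hypotheses (your ``$(a^*)^2b^*=a^*b^*a^*$'') or hoping that $(ac)^\Pi$ appears ``symmetrically'' in a form like $z^*z$ is not how the self-adjointness is obtained, and your partial computation $(ac)(ay^2c)=a(cay)yc$ heads in a direction that does not use the commutation relations the hypotheses actually provide.

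The missing identity is cleaner than you anticipate: the spectral idempotent of $ac$ built from the Cline candidate \emph{equals} the spectral idempotent of $ba$, so self-adjointness transfers for free. Concretely, suppose $ba\in R^\diamond$ and set $z=a[(ba)^\diamond]^{2}c$; Lemma \ref{3-1} gives $z(ac)z=z$, $z\in{\rm comm}^{2}(ac)$ and $(ac)^{n}-(ac)z\in\sqrt{J(R)}$. From $aba=ba^{2}$ and $a^{2}c=aca$ one has $a(ba)=(ba)a$ and $a(ac)=(ac)a$, hence $a$ commutes with $(ba)^\diamond\in{\rm comm}^{2}(ba)$, and then
\begin{align*}
(ac)z &= (aca)[(ba)^\diamond]^{2}c=(aba)[(ba)^\diamond]^{2}c=a(ba)^\diamond c=(ba)^\diamond ac\\
&=[(ba)^\diamond]^{2}(ba)(ac)=[(ba)^\diamond]^{2}b(a^{2}c)=[(ba)^\diamond]^{2}b(aba)=(ba)^\diamond(ba),
\end{align*}
which is self-adjoint because $ba\in R^\diamond$; hence $z=(ac)^\diamond$. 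Symmetrically, if $ac\in R^\diamond$ and $y=b[(ac)^\diamond]^{2}a$, the same relations give $(ba)y=(ac)(ac)^\diamond$, so $y=(ba)^\diamond$. This short computation — using the extra identities $ba^{2}=aba$ and $a^{2}c=aca$ exactly where you predicted they must enter, namely to commute $a$ past $ba$, $ac$ and their inverses — is what your proposal needs in order to be complete; with it, the displayed formulas for $(ac)^\diamond$ and $(ba)^\diamond$ are indeed inherited from Lemma \ref{3-1}, as you say.
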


\begin{proof} $(\Rightarrow)$. Let $y=b[(ac)^\diamond]^{2}a$. Exploiting Lemma \ref{3-1}, one derives that $y(ba)y=y$, $y\in {\rm comm}^{2}(ba)$, and $(ba)^{n}-(ba)y\in \sqrt{J(R)}$. Since
$a(ba)=(ba)a=a(ac)=(ac)a$, we obtain
\begin{equation*}
\begin{split}
bay=ba  b[(ac)^\diamond]^{2}a
 &=bab\cdot
ac[(ac)^\diamond]^{3}a=b(aca)c[(ac)^\diamond]^{3}a\\
&=b(ac)^\diamond a=ba(ac)^\diamond =ba\cdot
ac[(ac)^\diamond]^{2}\\
&=(aca)c[(ac)^\diamond]^{2}=ac(ac)^\diamond.
\end{split}
\end{equation*}
Therefore, $(bay)^{\ast}=bay$, and thus, $y=ba^\diamond$.

$(\Leftarrow)$. Let $z=a[(ba)^\diamond]^{2}c$. According to Lemma
\ref{3-1}, we have $z(ac)z=z$, $z\in {\rm comm}^{2}(ac)$, and
$(ac)^{n}-(ac)z\in \sqrt{J(R)}$. To show that $ac\in R^\diamond$,
it suffices to prove $(acz)^{\ast}=acz$. As
$a(ba)=(ba)a=a(ac)=(ac)a$, it follows that
\begin{center}
$acz=aba[(ba)^\diamond]^{2}c=a(ba)^\diamond c=(ba)^\diamond ac
=[(ba)^\diamond]^{2}ba\cdot
ac=[(ba)^\diamond]^{2}b(aba)=(ba)^\diamond ba$.
\end{center}
So, $(acz)^{\ast}=acz$, as required.
\end{proof}

Putting ``$b=c$" in Theorem \ref{525-1}, we yield the following result immediately.

\begin{corollary}
Let $R$ be a $*$-ring and $a$, $b\in R$. If $aba=ba^{2}=a^{2}b$, then $ab\in R^\diamond$ if, and only if, $ba\in R^\diamond$.
\end{corollary}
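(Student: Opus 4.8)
The plan is to obtain this corollary as the special case $b=c$ of Theorem~\ref{525-1}. First I would check that the hypothesis $aba=ba^{2}=a^{2}b$ is exactly the condition ``$aba=ba^{2}=a^{2}c=aca$'' of Theorem~\ref{525-1} once we substitute $c=b$: indeed, with $c=b$ the four terms become $aba$, $ba^{2}$, $a^{2}b$, $aba$, and the equality of all of them is precisely the stated assumption $aba=ba^{2}=a^{2}b$ (the last equality $aba=aca$ being automatic). So the hypothesis transfers verbatim.

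Then I would simply invoke Theorem~\ref{525-1} with $c$ replaced by $b$ to conclude that $ab=ac\in R^{\diamond}$ if, and only if, $ba\in R^{\diamond}$. If one also wants the inverse formulae recorded, they read $(ab)^{\diamond}=a[(ba)^{\diamond}]^{2}b$ and $(ba)^{\diamond}=b[(ab)^{\diamond}]^{2}a$, again just by setting $c=b$ in the formulae of Theorem~\ref{525-1}.

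There is essentially no obstacle here; the only thing to be careful about is the bookkeeping of which of the four products in the hypothesis of Theorem~\ref{525-1} collapse to which, so that the reader sees that the three-term condition $aba=ba^{2}=a^{2}b$ genuinely implies the four-term condition. A one-line proof suffices.

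\begin{proof}
Take $c=b$ in Theorem~\ref{525-1}. Then the hypothesis $aba=ba^{2}=a^{2}c=aca$ becomes $aba=ba^{2}=a^{2}b=aba$, which holds precisely when $aba=ba^{2}=a^{2}b$. Hence, by Theorem~\ref{525-1}, $ab\in R^{\diamond}$ if, and only if, $ba\in R^{\diamond}$ (and, in this case, $(ab)^{\diamond}=a[(ba)^{\diamond}]^{2}b$ and $(ba)^{\diamond}=b[(ab)^{\diamond}]^{2}a$).
\end{proof}
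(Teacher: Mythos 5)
Your proposal is correct and is exactly the paper's argument: the paper derives this corollary by ``putting $b=c$'' in Theorem~\ref{525-1}, which is precisely the specialization you carry out (with the hypothesis $aba=ba^{2}=a^{2}c=aca$ collapsing to $aba=ba^{2}=a^{2}b$). The explicit check that the four-term condition reduces to the three-term one, and the recorded formulae $(ab)^{\diamond}=a[(ba)^{\diamond}]^{2}b$ and $(ba)^{\diamond}=b[(ab)^{\diamond}]^{2}a$, are fine.
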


Let $R$ be a ring and $a,b,c\in R$. The standard Jacobson's lemma states that if $1-ab\in U(R)$, then $1-ba\in U(R)$ and $(1-ba)^{-1}=1+b(1-ab)^{-1}a$. However, Corach et al. \cite{Corach} extended the classical Jacobson's lemma to the case ``$aba=aca$", and proved that if $1-ac\in U(R)$, then $1-ba\in U(R)$.

\medskip

We now have the following statement for $pns$-Drazin inverses which somewhat improves the corresponding results from \cite{LN} and \cite{RL}.

\begin{lemma}\label{3-2}
Let $R$ be a ring, $a$, $b$, $c\in R$ and $n\in \mathbb{N}$. If
$aba=aca$, then $\alpha=1-ba$ is $pns$-Drazin inverses if, and only if, so is $\beta=1-ac$. In this case,
\begin{center}
$\alpha^{\tiny{\textcircled{\qihao
D}}}=1-b\beta^{\pi}v(ac)^{2}a+b\beta\beta^{\tiny{\textcircled{\qihao
D}}}a+b\beta^{\tiny{\textcircled{\qihao D}}}aba$,
\end{center}
and
\begin{center}
$\beta^{\tiny{\textcircled{\qihao
D}}}=1-a\alpha^{\pi}u(ba)^{2}c+a\alpha\alpha^{\tiny{\textcircled{\qihao
D}}}c+a\alpha^{\tiny{\textcircled{\qihao D}}}cac$,
\end{center}
where $\alpha^{\pi}=1-\alpha\alpha^{\tiny{\textcircled{\qihao D}}}$,
$\beta^{\pi}=1-\beta\beta^{\tiny{\textcircled{\qihao D}}}$,
$v=(1-\beta\beta^{\pi}\sum_{i=0}^{2}(ac)^{i})^{-1}$ and
$u=(1-\alpha\alpha^{\pi}\sum_{i=0}^{2}(ba)^{i})^{-1}$.
\end{lemma}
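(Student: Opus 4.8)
The plan is to reduce the lemma, via Theorem \ref{1-3}, to two independent ``transfers'' across the hypothesis $aba=aca$ --- one for $p$-Drazin invertibility and one for the side condition $x-x^{n+1}\in\sqrt{J(R)}$ --- and to drive both by an elementary package of polynomial identities that I would record first. From $aba=aca$ one gets $a(ba)^{k}=(ac)^{k}a$ for all $k\ge0$ by induction, the inductive step being $a(ba)^{k+1}=\bigl(a(ba)^{k}\bigr)ba=(ac)^{k}(aba)=(ac)^{k}(aca)=(ac)^{k+1}a$. Consequently, for every $g\in\mathbb{Z}[x]$,
\[
a\,g(ba)=g(ac)\,a,\qquad b\,g(ac)\,a=ba\,g(ba),\qquad a\,g(ba)\,c=ac\,g(ac),
\]
the last two deduced from the first by inserting $(ac)^{k}a=a(ba)^{k}$ and summing. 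Writing $\alpha=1-ba$ and $\beta=1-ac$, this already yields $a\alpha^{k}=\beta^{k}a$ for all $k$.

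For the side condition, take $g(x)=(1-x)-(1-x)^{n+1}$, so that $g(ba)=\alpha-\alpha^{n+1}$ and $g(ac)=\beta-\beta^{n+1}$, and note $g(0)=0$, i.e. $g(x)=x\,h(x)$ for some $h\in\mathbb{Z}[x]$; hence $g(ba)^{m}=(ba)^{m}h(ba)^{m}$ and $g(ac)^{m}=(ac)^{m}h(ac)^{m}$ for all $m\ge1$. If $g(ac)^{m}\in J(R)$, then applying the identity $b\,g(ac)\,a=ba\,g(ba)$ with $g$ replaced by its $m$-th power and using that $J(R)$ is a two-sided ideal gives $ba\,g(ba)^{m}\in J(R)$, whence $g(ba)^{m+1}=\bigl(ba\,g(ba)^{m}\bigr)h(ba)\in J(R)$, i.e. $g(ba)\in\sqrt{J(R)}$; symmetrically, via $a\,g(ba)\,c=ac\,g(ac)$, if $g(ba)^{m}\in J(R)$ then $g(ac)^{m+1}=\bigl(ac\,g(ac)^{m}\bigr)h(ac)\in J(R)$. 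Thus $\alpha-\alpha^{n+1}\in\sqrt{J(R)}\iff\beta-\beta^{n+1}\in\sqrt{J(R)}$.

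For the $p$-Drazin part I would establish that $\alpha=1-ba$ is $p$-Drazin invertible if and only if $\beta=1-ac$ is --- this improving, as announced, the results of \cite{LN,RL} --- again from these identities: via the pseudo-polar criterion of \cite[Theorem 3.2]{Wang12} it amounts to carrying an idempotent $q\in{\rm comm}^{2}(\beta)$ with $\beta+q\in U(R)$ and $\beta q\in\sqrt{J(R)}$ over to the corresponding idempotent for $\alpha$, which is exactly what the closed forms in the statement encode. Combined with the previous paragraph and Theorem \ref{1-3}, this gives $\alpha\in R^{\tiny{\textcircled{\qihao D}}}\iff\beta\in R^{\tiny{\textcircled{\qihao D}}}$. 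For the formulae one may note (from the equivalence of parts $(1)$ and $(3)$ of Corollary \ref{1-4}, together with uniqueness of the $p$-Drazin inverse) that the $pns$-Drazin inverse of a $pns$-Drazin invertible element coincides with its $p$-Drazin inverse, so that the claimed expression is the $p$-Drazin Jacobson formula; alternatively --- and this is how I would actually write it --- I would verify directly that
\[
y:=1-b\beta^{\pi}v(ac)^{2}a+b\beta\beta^{\tiny{\textcircled{\qihao D}}}a+b\beta^{\tiny{\textcircled{\qihao D}}}aba
\]
satisfies $y\in{\rm comm}^{2}(\alpha)$, $y\alpha y=y$ and $\alpha^{n}-\alpha y\in\sqrt{J(R)}$, and then invoke Lemma \ref{1-1}. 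Here one checks at the outset that $v$ makes sense: $\beta\beta^{\pi}=\beta-\beta^{2}\beta^{\tiny{\textcircled{\qihao D}}}\in\sqrt{J(R)}$ by Corollary \ref{1-4}, and it commutes with $ac$ (being a polynomial in $\beta$), so $\beta\beta^{\pi}\sum_{i=0}^{2}(ac)^{i}\in\sqrt{J(R)}$ by Lemma \ref{1-2}(1), hence $1-\beta\beta^{\pi}\sum_{i=0}^{2}(ac)^{i}\in U(R)$ by Lemma \ref{1-2}(3); the rest is bookkeeping with the three identities above and the defining relations for $\beta^{\tiny{\textcircled{\qihao D}}}$. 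The expression for $\beta^{\tiny{\textcircled{\qihao D}}}$ is obtained in the same way.

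The step I expect to be the real obstacle is precisely this last verification of the closed forms --- understanding why the truncated geometric correction $\sum_{i=0}^{2}(ac)^{i}$ and the factor $(ac)^{2}$ are the right ones on the pseudo-nilpotent part cut out by $\beta^{\pi}$, and then forcing $\alpha^{n}-\alpha y$ into $\sqrt{J(R)}$ and checking the double-commutant membership. Each of these reduces to persistent but routine manipulation with $a\,g(ba)=g(ac)\,a$, $b\,g(ac)\,a=ba\,g(ba)$, $a\,g(ba)\,c=ac\,g(ac)$ and the ideal property of $J(R)$; by comparison the two ``iff'' transfers themselves are comparatively mild once those identities are in hand.
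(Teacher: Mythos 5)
Your preparatory material is sound: the intertwining identities $a(ba)^k=(ac)^ka$, $b\,g(ac)\,a=ba\,g(ba)$, $a\,g(ba)\,c=ac\,g(ac)$ are correct, the resulting transfer of the condition $x-x^{n+1}\in\sqrt{J(R)}$ between $\alpha=1-ba$ and $\beta=1-ac$ is a genuinely nice, elementary argument (it would replace step (3) of the actual verification), and your check that $v$ (resp.\ $u$) is a unit via Corollary \ref{1-4} and Lemma \ref{1-2} matches what the paper does. The problem is that everything after that is deferred. Your ``Route A'' reduction through Theorem \ref{1-3} needs, besides the side condition, the statement that $1-ba$ is $p$-Drazin invertible if and only if $1-ac$ is; you assert that this ``amounts to carrying an idempotent over \dots which is exactly what the closed forms in the statement encode,'' but that transfer is itself a Jacobson-lemma-type result of the same depth as the lemma being proved, and you neither prove it nor cite an applicable result, so as written the reduction is circular. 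Likewise, identifying the $pns$-Drazin inverse with the $p$-Drazin inverse (which is correct by Corollary \ref{1-4}) only yields the displayed formulae if the corresponding $p$-Drazin Jacobson formula were already available, which it is not within this paper.

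Your ``Route B'' --- direct verification that $y$ satisfies $y\in{\rm comm}^{2}(\alpha)$, $y\alpha y=y$ and $\alpha^{n}-\alpha y\in\sqrt{J(R)}$, then Lemma \ref{1-1} --- is exactly the paper's proof, but it is not ``routine bookkeeping with the three identities.'' The double-commutant check cannot be run on polynomial identities alone, because $\beta^{\tiny{\textcircled{\qihao D}}}$, $\beta^{\pi}$ and $v$ are not polynomials in $ac$; the paper needs an additional idea, namely that for an arbitrary $y$ with $y\beta=\beta y$ the element $bacya$ commutes with $\alpha$ (its identity (1.1)), hence with $\alpha^{\tiny{\textcircled{\qihao D}}}$, $\alpha^{\pi}$ and $u$, followed by a chain of cancellations built on $\alpha^{\pi}=\alpha^{\pi}(ba)^{3}u$ to reach $yx=xy$. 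Similarly, the membership $\beta^{n}-x\beta\in\sqrt{J(R)}$ is obtained there via the recursive identity $(\beta^{n}-x\beta)^{m+1}=a(\alpha^{n}-\alpha\alpha^{\tiny{\textcircled{\qihao D}}})^{m}\bigl(\alpha^{\pi}u(ba)^{2}c+\sum_{i=1}^{n}(-1)^{i}C^{i}_{n}(ca)^{i-1}c\bigr)$, not by a one-line substitution. Since you yourself flag this verification as ``the real obstacle'' and leave it unexecuted, the proposal is a plausible plan whose central step --- the part that constitutes essentially the whole of the paper's proof --- is missing.
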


\begin{proof}
It is enough to prove only one direction, as the other direction can be
showed in a similar manner. So, we suppose that $\alpha=1-ba\in
R^{\tiny{\textcircled{\qihao D}}}$. Let
$\alpha^{\pi}=1-\alpha\alpha^{\tiny{\textcircled{\qihao D}}}$. In
accordance with Corollary \ref{1-4}, $\alpha\alpha^{\pi}\in \sqrt{J(R)}$.
So, $t=1-\alpha\alpha^{\pi}\sum_{i=0}^{2}(ba)^{i}\in U(R)$. Let
$u=t^{-1}$ and $x=1-a\alpha^{\pi}u
(ba)^{2}c+a\alpha\alpha^{\tiny{\textcircled{\qihao
D}}}c+a\alpha^{\tiny{\textcircled{\qihao D}}}cac$. Note that
$(ba)u=u(ba)$. To show that $\beta=1-ac\in
R^{\tiny{\textcircled{\qihao D}}}$, we next prove the following three things: (1)~$x\in
{\rm comm}^{2}(\beta)$;~ ~(2)~$x\beta x=x$;~
~(3)~$\beta^{n}-x\beta\in \sqrt{J(R)}$.

(1) Let $y\in R$ with $y\beta=\beta y$. Then, $ac$ commutes with $y$.
So, we obtain
\begin{equation}
(bacya)\alpha =bacya-bacacya =\alpha(bacya ),\tag{1.1}
\end{equation}
which implies that $bacya$ commutes with
$\alpha^{\tiny{\textcircled{\qihao D}}}$, $\alpha^{\pi}$, $ba$ and
$u$. Notice that $\alpha^{\pi}u^{-1}
=\alpha^{\pi}-\alpha^{\pi}(1-(ba)^{3}) =\alpha^{\pi}(ba)^{3}$, and hence we
have
\begin{equation}
\alpha^{\pi}=\alpha^{\pi}(ba)^{3}u.\tag{1.2}
\end{equation}
By (1.1), we obtain
\begin{equation}
ya\alpha^{\pi}u(ba)^{2}c
=ya(ba)^2\alpha^{\pi}uc=a(bacya)\alpha^{\pi}uc
=a\alpha^{\pi}u(bacya)c =a\alpha^{\pi}u(ba)^{2}cy.\tag{1.3}
\end{equation}
Multiplying (1.3) by $ac$ on the right, we get
$ya\alpha^{\pi}c=a\alpha^{\pi}cy$ with the aid of (1.2), which yields
\begin{equation}
ya\alpha\alpha^{\tiny{\textcircled{\qihao
D}}}c=a\alpha\alpha^{\tiny{\textcircled{\qihao D}}}cy.\tag{1.4}
\end{equation}
It now follows that
\begin{equation}
yaba\alpha\alpha^{\tiny{\textcircled{\qihao
D}}}cac=aca\alpha\alpha^{\tiny{\textcircled{\qihao
D}}}cyac=aba\alpha\alpha^{\tiny{\textcircled{\qihao D}}}cacy
\tag{1.5}
\end{equation}
by multiplying $ac$ on both sides of (1.4). Besides, in view of (1.1),
$$y(a(ba)^{2}\alpha^{\tiny{\textcircled{\qihao
D}}}cac)=ya(ba)^{3}\alpha^{\tiny{\textcircled{\qihao
D}}}c=a(bacya)ba\alpha^{\tiny{\textcircled{\qihao D}}}c
=aba\alpha^{\tiny{\textcircled{\qihao
D}}}(bacya)c=(a(ba)^{2}\alpha^{\tiny{\textcircled{\qihao
D}}}cac)y,$$ which gives
\begin{equation}y(aba(1-\alpha)\alpha^{\tiny{\textcircled{\qihao
D}}}cac)=(aba(1-\alpha)\alpha^{\tiny{\textcircled{\qihao
D}}}cac)y.\tag{1.6}
\end{equation}
Combining (1.5) with (1.6), we obtain
\begin{equation}
yaba\alpha^{\tiny{\textcircled{\qihao
D}}}cac=aba\alpha^{\tiny{\textcircled{\qihao D}}}cacy.\tag{1.7}
\end{equation}
Now, multiplying (1.4) by $ac$ on the right, we have
$ya\alpha\alpha^{\tiny{\textcircled{\qihao
D}}}cac=a\alpha\alpha^{\tiny{\textcircled{\qihao D}}}cacy$, and then
$ya(1-ba)\alpha^{\tiny{\textcircled{\qihao
D}}}cac=a(1-ba)\alpha^{\tiny{\textcircled{\qihao D}}}cacy$. By
virtue of (1.7), we have
\begin{equation}
ya\alpha^{\tiny{\textcircled{\qihao
D}}}cac=a\alpha^{\tiny{\textcircled{\qihao D}}}cacy.\tag{1.8}
\end{equation}
According to (1.3) (1.4) and (1.8), we get $yx=xy$, which implies that
$x\in {\rm comm}^{2}(\beta)$.

(2) Taking $y=ac$ in (1.8), one has that $(a\alpha^{\tiny{\textcircled{\qihao D}}}cac)ac=ac(a\alpha^{\tiny{\textcircled{\qihao
 D}}}cac)$. Then, we can calculate that
\begin{align*}
x\beta=&1-ac-a\alpha^{\pi}u(ba)^{2}c+a\alpha^{\pi}c+a\alpha\alpha^{\tiny{\textcircled{\qihao
D}}}c-a\alpha\alpha^{\tiny{\textcircled{\qihao D}}}cac
+a\alpha^{\tiny{\textcircled{\qihao
D}}}cac-a\alpha^{\tiny{\textcircled{\qihao D}}}cacac\\
=&
1-a\alpha^{\pi}u(ba)^{2}c+(-ac+a\alpha^{\pi}c+a(1-a^{\pi})c)+(-a\alpha\alpha^{\tiny{\textcircled{\qihao
D}}}cac +a\alpha^{\tiny{\textcircled{\qihao
D}}}cac)-a\alpha^{\tiny{\textcircled{\qihao D}}}cacac\\
=&1-a\alpha^{\pi}u(ba)^{2}c+aba\alpha^{\tiny{\textcircled{\qihao
D}}}cac -aca\alpha^{\tiny{\textcircled{\qihao D}}}cac\\
=&1-a\alpha^{\pi}u(ba)^{2}c.
\end{align*}
We, however, see that $(a\alpha^{\pi}u(ba)^{2}c)^2=a\alpha^{\pi}u(ba)^{2}c$ and
$\alpha^{\pi}\alpha^{\tiny{\textcircled{\qihao D}}}=0.$ So,
$$x\beta x=(1-a\alpha^{\pi}u(ba)^{2}c)(1-a\alpha^{\pi}u(ba)^{2}c+a\alpha\alpha^{\tiny{\textcircled{\qihao
D}}}c+a\alpha^{\tiny{\textcircled{\qihao D}}}cac)=x.$$

(3) Observe that
\begin{center}
 $\alpha^{n}-\alpha\alpha^{\tiny{\textcircled{\qihao D}}}
=(1-ba)^{n}-1+1-\alpha\alpha^{\tiny{\textcircled{\qihao D}}}
=\sum_{i=1}^{n}(-1)^{i}C^{i}_{n}(ba)^{i}+\alpha^{\pi} $.
\end{center}
Since
$x\beta=1-a\alpha^{\pi}u(ba)^{2}c$, we obtain
\begin{center}
$\beta^{n}-x\beta =(1-ac)^{n}-1+a\alpha^{\pi}u(ba)^{2}c
=a(\alpha^{\pi}u(ba)^{2}c+\sum_{i=1}^{n}(-1)^{i}C^{i}_{n}(ca)^{i-1}c).$
\end{center}
Hence, one computes that
\begin{align*}
&(\beta^{n}-x\beta)^{2}\\
=&a[\alpha^{\pi}+\sum_{i=1}^{n}(-1)^{i}C^{i}_{n}(ca)^{i}][\alpha^{\pi}u(ba)^{2}c+\sum_{i=1}^{n}(-1)^{i}C^{i}_{n}(ca)^{i-1}c]\\
=&a[\alpha^{\pi}u(ba)^{2}c+\alpha^{\pi}\sum_{i=1}^{n}(-1)^{i}C^{i}_{n}(ca)^{i-1}c
+\sum_{i=1}^{n}(-1)^{i}C^{i}_{n}(ba)^{i}\alpha^{\pi}u(ba)^{2}c\\ &+\sum_{i=1}^{n}(-1)^{i}C^{i}_{n}(ba)^{i}\sum_{i=1}^{n}(-1)^{i}C^{i}_{n}(ca)^{i-1}c]\\
=&a[\alpha^{\pi}+\sum_{i=1}^{n}(-1)^{i}C^{i}_{n}(ba)^{i}][\alpha^{\pi}u(ba)^{2}c+\sum_{i=1}^{n}(-1)^{i}C^{i}_{n}(ca)^{i-1}c]\\
=&a(\alpha^{n}-\alpha\alpha^{\tiny{\textcircled{\qihao
D}}})[\alpha^{\pi}u(ba)^{2}c+\sum_{i=1}^{n}(-1)^{i}C^{i}_{n}(ca)^{i-1}c].
\end{align*}
Repeating the same procedure as that alluded to above, we have
\begin{center}
$(\beta^{n}-x\beta)^{m+1}=a(\alpha^{n}-\alpha\alpha^{\tiny{\textcircled{\qihao
D}}})^m(\alpha^{\pi}u(ba)^{2}c+\sum_{i=1}^{n}(-1)^{i}C^{i}_{n}(ca)^{i-1}c)$
\end{center}
for any integer $m>1.$ As
$\alpha^{n}-\alpha\alpha^{\tiny{\textcircled{\qihao D}}}\in
\sqrt{J(R)},$ one must get $(\beta^{n}-x\beta)^{k}\in J(R)$ for some
integer $k$, which implies that $\beta^{n}-x\beta \in\sqrt{J(R)}$, as required.
\end{proof}

The next comments are worthy of mentioning.

\begin{remark}
\par$(1)$ Jacobson's lemma does not hold for $pns$-$*$-Drazin inverses. To show that, let $R$ be the $*$-ring as given in Example \ref{6-20}, and let
$A=\left(\begin{array}{cc}0&0\\1&1\end{array}\right)$,
$B=\left(\begin{array}{cc}0&1\\0&0\end{array}\right)\in R$. Then,
$1-AB=\left(\begin{array}{cc}1&0\\0&0\end{array}\right)$ and
$1-BA=\left(\begin{array}{cc}0&-1\\0&1\end{array}\right)$. Clearly,
$1-AB$ is a projection, and $1-BA$ is an idempotent but not a
projection. Consulting with Example \ref{3-3}(2), we readily see that $1-AB$ is $pns$-$*$-Drazin
inverses, while $1-BA$ is not.
\par$(2)$ An element $a$ of a $*$-ring is $pns$-$*$-Drazin
invertible cannot imply that so is $1-a$. For example, let
$R=\mathbb{Z}_6$ be the ring of integers modulo $6$. The involution
of $R$ is defined by $*=1_R$, where $1_R$ is the identity
endomorphism of $R$. It is clear that $\sqrt{J(R)}=0$ and $0, 1$ are
the only projections of $R$. For $n=2\in \mathbb{N}$, due to
Proposition \ref{1-10}, $a=-1\in R$ is $pns$-$*$-Drazin invertible,
whereas $1-a=2\in R$ is manifestly not.
\end{remark}

We are now ready to prove the following assertion.

\begin{theorem}\label{525-2}
Let $R$ be a $*$-ring and $a$, $b$, $c\in R$. If
$aba=ba^{2}=a^{2}c=aca$, then $\alpha=1-ba\in R^\diamond$ if, and
only if, $\beta=1-ac\in R^\diamond$. In this case,
\begin{center}
$\alpha^\diamond=1-b\beta^{\pi}v(ac)^{2}a+b\beta\beta^\diamond
a+b\beta^\diamond aba$,
\end{center}
and
\begin{center}
$\beta^\diamond=1-a\alpha^{\pi}u(ba)^{2}c+a\alpha\alpha^\diamond
c+a\alpha^\diamond cac$,
\end{center}
where $\alpha^{\pi}=1-\alpha\alpha^\diamond$, $\beta^{\pi}=1-\beta\beta^\diamond$,
$v=(1-\beta\beta^{\pi}\sum_{i=0}^{2}(ac)^{i})^{-1}$ and $u=(1-\alpha\alpha^{\pi}\sum_{i=0}^{2}(ba)^{i})^{-1}$.
\end{theorem}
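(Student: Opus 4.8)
The plan is to reduce the statement to Lemma~\ref{3-2} and then supply the one genuinely new ingredient: the transfer of self-adjointness of the associated idempotent.

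Since $aba=aca$ is part of the hypothesis, Lemma~\ref{3-2} applies: $\alpha:=1-ba$ is $pns$-Drazin invertible if and only if $\beta:=1-ac$ is, with the displayed expressions for $\alpha^{\tiny{\textcircled{\qihao D}}}$ and $\beta^{\tiny{\textcircled{\qihao D}}}$; moreover its proof records the identities $\beta\beta^{\tiny{\textcircled{\qihao D}}}=1-a\alpha^{\pi}u(ba)^{2}c$ and $\alpha\alpha^{\tiny{\textcircled{\qihao D}}}=1-b\beta^{\pi}v(ac)^{2}a$, where $\alpha^{\pi}=1-\alpha\alpha^{\tiny{\textcircled{\qihao D}}}$ and $\beta^{\pi}=1-\beta\beta^{\tiny{\textcircled{\qihao D}}}$. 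By definition an element $x\in R^{\tiny{\textcircled{\qihao D}}}$ lies in $R^{\diamond}$ exactly when $xx^{\tiny{\textcircled{\qihao D}}}$ is a projection, and then $x^{\diamond}=x^{\tiny{\textcircled{\qihao D}}}$; hence every $\diamond$-formula in the statement follows from Lemma~\ref{3-2} once we prove $\alpha\in R^{\diamond}\Leftrightarrow\beta\in R^{\diamond}$. Since $R^{\diamond}$ is $*$-stable (if $x^{\diamond}$ exists then $(x^{*})^{\diamond}=(x^{\diamond})^{*}$, as $J(R)^{*}=J(R)$ forces $\sqrt{J(R)}$ to be $*$-stable), and the triple $(a^{*},c^{*},b^{*})$ again satisfies a hypothesis of the required shape (because $(aca)^{*}=(aba)^{*}$) with its ``$\alpha$'' equal to $\beta^{*}$ and its ``$\beta$'' equal to $\alpha^{*}$, it suffices to prove one implication, say $\alpha\in R^{\diamond}\Rightarrow\beta\in R^{\diamond}$.

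So assume $\alpha\in R^{\diamond}$, i.e. $(\alpha^{\pi})^{*}=\alpha^{\pi}$ (equivalently $(\alpha\alpha^{\tiny{\textcircled{\qihao D}}})^{*}=\alpha\alpha^{\tiny{\textcircled{\qihao D}}}$, and, since $u^{-1}=\alpha\alpha^{\tiny{\textcircled{\qihao D}}}+\alpha^{\pi}(ba)^{3}$, this pins down $u^{*}$ as well). As $\beta\in R^{\tiny{\textcircled{\qihao D}}}$ is already known, what remains is $(\beta\beta^{\tiny{\textcircled{\qihao D}}})^{*}=\beta\beta^{\tiny{\textcircled{\qihao D}}}$, i.e. self-adjointness of $w:=a\alpha^{\pi}u(ba)^{2}c$. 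Here the part of $aba=ba^{2}=a^{2}c=aca$ going beyond $aba=aca$ is used: $aba=ba^{2}$ gives $a(ba)=(ba)a$, so $a$ commutes with $\alpha$ and hence with $\alpha^{\tiny{\textcircled{\qihao D}}},\alpha^{\pi},u$; $a^{2}c=aca$ gives $a(ac)=(ac)a$; and $aba=a^{2}c$, $ba^{2}=aca$ give $ad=da=0$ for $d:=ba-ac$, whence $(ba)^{j}(ac)=(ac)^{j+1}$. Also $aba=aca$ alone yields the balancing identities $(ab)(ac)=(ac)^{2}$ and $(ba)^{k}c=b(ac)^{k}$, so $a(ba)^{k}c=(ac)^{k+1}$. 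Combining these, $w=\alpha^{\pi}u\cdot a(ba)^{2}c=\alpha^{\pi}u(ac)^{3}$, and, using $\alpha^{\pi}u(ba)^{3}=\alpha^{\pi}$ with $(ba)^{3}=(ac)^{3}+(ba)^{2}d$, one obtains $w=\alpha^{\pi}-\alpha^{\pi}u(ba)^{2}d$, so that $\beta\beta^{\tiny{\textcircled{\qihao D}}}=\alpha\alpha^{\tiny{\textcircled{\qihao D}}}+\alpha^{\pi}u(ba)^{2}d$.

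The remaining task --- showing $w^{*}=w$, equivalently that the correction term $\alpha^{\pi}u(ba)^{2}d$ is self-adjoint --- is the heart of the argument and the step I expect to be the main obstacle. The intended route is to rerun the computation in the proof of Lemma~\ref{3-2} once more, carrying the involution through every line: apply Lemma~\ref{3-2} to the triple $(a^{*},c^{*},b^{*})$ to get $\beta^{*}(\beta^{*})^{\tiny{\textcircled{\qihao D}}}=1-c^{*}\alpha^{\pi}u^{*}\big((ba)^{2}\big)^{*}a^{*}$, collate this with $(\beta\beta^{\tiny{\textcircled{\qihao D}}})^{*}=1-(a\alpha^{\pi}u(ba)^{2}c)^{*}$ and with $\beta\beta^{\tiny{\textcircled{\qihao D}}}=1-a\alpha^{\pi}u(ba)^{2}c$, and then use $(\alpha^{\pi})^{*}=\alpha^{\pi}$, the formula $u^{-1}=\alpha\alpha^{\tiny{\textcircled{\qihao D}}}+\alpha^{\pi}(ba)^{3}$, and the commutation identities above to force $w^{*}=w$; the model for this bookkeeping is the short calculation in the proof of Theorem~\ref{525-1}, where the relevant idempotent is shown self-adjoint by repeatedly invoking $a(ba)=(ba)a=a(ac)=(ac)a$. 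Once $(\beta\beta^{\tiny{\textcircled{\qihao D}}})^{*}=\beta\beta^{\tiny{\textcircled{\qihao D}}}$ is in hand, Proposition~\ref{1-10} yields $\beta\in R^{\diamond}$; the converse implication follows as explained, and the asserted formulas for $\alpha^{\diamond}$ and $\beta^{\diamond}$ are exactly those provided by Lemma~\ref{3-2}.
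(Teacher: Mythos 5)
Your reduction to Lemma~\ref{3-2}, the identification of $R^{\diamond}$ membership with self-adjointness of the spectral idempotent, and the $*$-dual trick with the triple $(a^{*},c^{*},b^{*})$ for the converse are all sound (the paper simply says the converse is ``analogous''). But the proof is not complete: the step you yourself single out as the heart of the matter --- self-adjointness of $w=a\alpha^{\pi}u(ba)^{2}c$, equivalently of your correction term $\alpha^{\pi}u(ba)^{2}d$ --- is never actually established. ``Rerun the computation of Lemma~\ref{3-2} carrying the involution through every line'' is an intention, not an argument, so as written this is a genuine gap.

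The gap, however, closes instantly with identities you already derived, and in a much simpler way than the route you sketch: from $ad=0$ you get $(ba)d=b(ad)=0$, hence $(ba)^{2}d=0$, so the correction term is identically zero. Equivalently, $(ba)^{2}(ac)=bab(a^{2}c)=bab(aba)=(ba)^{3}$, and since $a$ commutes with $\alpha^{\pi}u$ and with $ba$ (and $u$ commutes with $ba$),
$$w=\alpha^{\pi}u(ba)^{2}(ac)=\alpha^{\pi}u(ba)^{3}=\alpha^{\pi},$$
using the identity $\alpha^{\pi}=\alpha^{\pi}(ba)^{3}u$ recorded in the proof of Lemma~\ref{3-2}. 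Hence $\beta\beta^{\tiny{\textcircled{\qihao D}}}=1-w=1-\alpha^{\pi}=\alpha\alpha^{\diamond}$, which is self-adjoint by hypothesis, and Proposition~\ref{1-10} gives $\beta\in R^{\diamond}$; the formulas are then those of Lemma~\ref{3-2}. This is precisely the paper's proof: the extra relations $aba=ba^{2}=a^{2}c=aca$ force the two spectral idempotents to coincide, so no involution bookkeeping of the kind you propose is needed.
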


\begin{proof} Assume that $\alpha=1-ba$ is $pns$-$*$-Drazin invertible. Let $$u=(1-\alpha\alpha^{\pi}\sum_{i=0}^{2}(ba)^{i})^{-1}$$ and
$$x=1-a\alpha^{\pi}u(ba)^{2}c+a\alpha\alpha^\diamond
c+a\alpha^\diamond cac.$$ Consulting with Lemma \ref{3-2}, one concludes that $x\beta x=x$, $x\in
{\rm comm}^{2}(\beta)$ and $\beta^{n}-\beta x\in \sqrt{J(R)}$. To
show that $\beta \in R^\diamond$, it suffices to prove $(\beta
x)^{\ast}=\beta x$. However, in view of the proof of Lemma \ref{3-2}, it must be that
$\alpha^{\pi}=\alpha^{\pi}(ba)^{3}u$ and $\beta
x=1-a\alpha^{\pi}u(ba)^{2}c$. Since $a\in {\rm comm}(ba),$ the element $a$
commutes with both $\alpha^{\pi}u$ and $ba$. By hypothesis, we obtain
\begin{center}
$\beta x
=1-a\alpha^{\pi}u(ba)^{2}c=1-\alpha^{\pi}u(ba)^{2}ac=1-\alpha^{\pi}u(ba)^{3}=1-\alpha^{\pi}
=\alpha\alpha^\diamond.$
\end{center}
Thus,
$(\beta x)^{\ast}=\beta x$, as desired. The converse can be shown analogously.
\end{proof}

As an automatic consequence, we yield:

\begin{corollary}
Let $R$ be a $*$-ring and $a$, $b\in R$. If $aba=ba^{2}=a^{2}b$, then $\alpha=1-ba\in R^\diamond$ if, and only if, $\beta=1-ab\in R^\diamond$.
\end{corollary}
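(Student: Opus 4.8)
The plan is to obtain this corollary directly as the special case $c=b$ of Theorem~\ref{525-2}, so essentially no new argument is needed. First I would check that the hypotheses match up: substituting $c=b$ into the chain $aba=ba^{2}=a^{2}c=aca$ of Theorem~\ref{525-2} yields $aba=ba^{2}=a^{2}b=aba$, in which the final equality is redundant, so this is precisely the triple of identities $aba=ba^{2}=a^{2}b$ assumed here. Hence all the hypotheses of Theorem~\ref{525-2} are satisfied with $c=b$.

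Next, with $c=b$ one has $\beta=1-ac=1-ab$, so the conclusion of Theorem~\ref{525-2} — namely that $\alpha=1-ba\in R^\diamond$ if, and only if, $1-ac\in R^\diamond$ — becomes exactly the asserted equivalence $1-ba\in R^\diamond$ if, and only if, $1-ab\in R^\diamond$. If desired, one can also specialize the two displayed formulas of Theorem~\ref{525-2} to record $\alpha^\diamond=1-b\beta^{\pi}v(ab)^{2}a+b\beta\beta^\diamond a+b\beta^\diamond aba$ and $\beta^\diamond=1-a\alpha^{\pi}u(ba)^{2}b+a\alpha\alpha^\diamond b+a\alpha^\diamond bab$, with $u,v$ defined as there, but the stated equivalence requires nothing beyond the substitution.

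There is no real obstacle in this corollary: the substantive work — constructing the candidate pseudo $n$-strong Drazin inverse, verifying via Lemma~\ref{3-2} the commutation, $x\beta x=x$ and $\beta^{n}-x\beta\in\sqrt{J(R)}$ conditions, and carrying out the crucial self-adjointness check $(\beta x)^*=\beta x$ using $\alpha^{\pi}=\alpha^{\pi}(ba)^{3}u$ and $\beta x=1-a\alpha^{\pi}u(ba)^{2}c$ together with the commutativity of $a$ with $\alpha^{\pi}u$ and $ba$ — has already been done in the proof of Theorem~\ref{525-2}. The only point to keep in mind is that here the single element $b$ plays both the role of $b$ and of $c$, which is harmless since the two directions of Theorem~\ref{525-2} are proved symmetrically, so the converse direction follows by the same specialization.
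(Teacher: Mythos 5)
Your proposal is correct and is exactly the paper's approach: the corollary is stated as "an automatic consequence" of Theorem~\ref{525-2}, obtained by putting $c=b$, and your check that the hypothesis chain $aba=ba^{2}=a^{2}c=aca$ collapses to $aba=ba^{2}=a^{2}b$ (with the last equality following by transitivity) is the only verification needed. Nothing further is required.
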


\medskip

%%%%%%%%%%%%%%%%%%%%%%%%%%%%%%%%%%%%%%%%%%%%%%%%%%%%%%%%%%%%%%%
%%%%%%%%%%%%%%%%%%%%%%%%%%%%%%%%%%%%%%%%%%%%%%%%%%%%%%%%%%%%%%%

\medskip

\section{\bf $pns$-Drazin inverses for all $n\geq 1$}

Here are focussing on the description of some crucial properties of rings for which every element has a $pns$-Drazin inverse (the integer $n$ depending on the choice of the element). We shall say that a ring $R$ is \textit{pseudo $\pi$-polar}, provided all of its elements are $pns$-Drazin invertible for various naturals $n$; or, equivalently, for each $a\in R$, there exists $e^2=e\in {\rm comm}^2(a)$ such that $a^n-e\in \sqrt{J(R)}$ for some integer $n\geq
1$. Thus, this leads to the interesting fact that pseudo $\pi$-polar rings are strongly clean in the sense that any of their elements is a sum of a unit and an idempotent which commute.

On the other side, it was considered in \cite{CD} those rings $R$ in which every $u\in U(R)$ satisfies the relation $u^n-1\in R^{\rm nil}$ for some $n\geq 1$ (which are called \emph{$\pi$-UU rings}). It was shown there that strongly $\pi$-regular $\pi$-UU rings are always periodic in the sense that, for each $a\in R$, there are two integers $m>n\geq 1$ with $a^m = a^n$. It is long known that $J(R)$ is nil whenever $R$ is strongly $\pi$-regular. However, this implication is still unproved for $\pi$-UU rings.

And so, our first goal here is to find a suitable connection between strongly $\pi$-regular rings, $\pi$-UU rings and pseudo $\pi$-polar rings. That is, as aforementioned, it is principally known that both strongly $\pi$-regular and pseudo $\pi$-polar rings are strongly clean. In this vein, with the results established above at hand, it is not too hard to prove that pseudo $\pi$-polar rings $R$ with nil $J(R)$ are always periodic, and vice versa.

\medskip

Concretely, the following is fulfilled.

\begin{theorem}\label{2-2}
A ring $R$ is periodic if, and only if, $R$ is pseudo $\pi$-polar and $J(R)$ is nil.
\end{theorem}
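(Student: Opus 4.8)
The plan is to prove the two implications separately, with the forward direction (periodic $\Rightarrow$ pseudo $\pi$-polar with $J(R)$ nil) being the easier one and the converse requiring the real work.

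For the forward direction, suppose $R$ is periodic, so for each $a \in R$ there are integers $m > k \geq 1$ with $a^m = a^k$. First I would recall the classical fact that in a periodic ring $J(R)$ is nil: if $x \in J(R)$ satisfies $x^m = x^k$ with $m > k$, then $x^k(1 - x^{m-k}) = 0$, and since $1 - x^{m-k} \in U(R)$ we get $x^k = 0$. To show $R$ is pseudo $\pi$-polar, fix $a \in R$ and pick $m > k \geq 1$ with $a^m = a^k$. A standard argument (as in the structure theory of periodic rings) produces, from the relation $a^k = a^k a^{k(m-k)} = \cdots$, an idempotent of the form $e = a^{kj}$ for a suitable $j$ lying in the ring generated by $a$, hence in ${\rm comm}^2(a)$; more precisely one chooses $t$ with $a^t = a^{2t}$ (obtainable by iterating $a^m = a^k$) and sets $e = a^t$. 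Then with $n = 1$ or a suitable small $n$ one checks $a^n - e$ is in fact nilpotent (being in the ring generated by $a$, where everything is eventually periodic), hence in $\sqrt{J(R)}$. By Lemma \ref{1-1} this gives pseudo $\pi$-polarity. (One must be slightly careful to exhibit the correct $n$; since $a$ generates a commutative periodic subring, $a$ itself is $pns$-Drazin invertible there for an appropriate $n$, and the idempotent lands in ${\rm comm}^2(a)$ because it is a polynomial in $a$.)

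For the converse, assume $R$ is pseudo $\pi$-polar with $J(R)$ nil. Fix $a \in R$; I need $a^s = a^r$ for some $s > r \geq 1$. By hypothesis there is $e^2 = e \in {\rm comm}^2(a)$ and $n \geq 1$ with $a^n - e \in \sqrt{J(R)}$; since $J(R)$ is nil, $\sqrt{J(R)} = R^{\rm nil}$, so $a^n - e$ is nilpotent, say $(a^n - e)^\ell = 0$. The Peirce decomposition with respect to $e$ (which commutes with $a$) splits $a = ae + a(1-e)$. On the corner $(1-e)R(1-e)$, the element $a(1-e)$ satisfies $(a(1-e))^n = (a^n - e)(1-e)$ which is nilpotent, so $a(1-e)$ is nilpotent: $(a(1-e))^p = 0$ for some $p$. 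On the corner $eRe$, the element $ae$ satisfies $(ae)^n = a^n e = (a^n - e)e + e \in e + R^{\rm nil}$, so $(ae)^n = e + q$ with $q$ nilpotent and $q \in eRe$; hence $(ae)^n$ is a unit of the corner ring $eRe$, which means $ae$ is a unit of $eRe$ and in particular $ae$ is not a zero divisor there. The key point is then to show $ae$ is periodic in $eRe$: here I would use that $1 - (ae)^n = -q$ is nilpotent in $eRe$, i.e., the unit $ae$ of $eRe$ satisfies $(ae)^n \in e + (eRe)^{\rm nil}$; iterating, $ae$ lies in a $\pi$-UU-type situation, and combined with $ae$ being a unit this forces $(ae)^{N} = e$ for a suitable $N$ (choose $N$ a multiple of $n$ large enough that $(q')^{?}=0$ where $(ae)^{jn} = e + q_j$ and the $q_j$ are nilpotent of bounded index — expanding $(e+q)^t = e + (\text{nilpotent})$ and noting that for $t$ large the nilpotent part is killed only if... — actually the cleanest route: $(ae)^n = e + q$ with $q^\ell = 0$; then $(ae)^{n\ell} = (e+q)^\ell = e + \sum_{i\ge1}\binom{\ell}{i}q^i$, still not obviously $e$).

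The main obstacle, and where I would spend the most care, is exactly this last step: deducing genuine periodicity $(ae)^{s} = (ae)^{r}$ on the corner from "$ae$ is a unit of $eRe$ and $(ae)^n - e$ is nilpotent." The right tool is: in any ring, if $u$ is a unit with $u^n - 1$ nilpotent, then $u$ need not be periodic in general — but here we have extra structure. Actually the correct and clean argument uses Theorem \ref{1-3}/Corollary \ref{1-5}: being pseudo $\pi$-polar, $a$ is $p$-Drazin invertible and $a - a^{n+1} \in \sqrt{J(R)} = R^{\rm nil}$, so $a - a^{n+1}$ is nilpotent, say $(a - a^{n+1})^m = 0$. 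Expanding, $a^m = a^m$ times a polynomial in $a$, and rearranging $a^m(1 - a^n)^m = 0$ gives $a^m = a^m \cdot (\text{sum of higher powers of }a)$; pushing this through, $a^m \in a^{m+1}R$, and one extracts $a^m = a^{m}g(a)$ with $g(a)$ a polynomial having $a^m g(a)$ eventually stabilizing, yielding $a^{M} = a^{M'}$ with $M' > M$. So I would restructure the converse to invoke Theorem \ref{1-3} directly: pseudo $\pi$-polar gives $a - a^{n+1} \in R^{\rm nil}$ for each $a$ (with $n$ depending on $a$), and from $(a-a^{n+1})^k = 0$ together with the fact that $a$ is strongly $\pi$-regular (being $p$-Drazin invertible with nil radical makes it Drazin invertible, hence $a^j = a^{j+1}t$ for some $j$ and $t \in {\rm comm}(a)$), a direct computation in the commutative subring generated by $a$ and its Drazin inverse yields $a^s = a^r$. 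I would present the converse this way, isolating the commutative-subring computation as the crux.
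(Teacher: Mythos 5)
Your forward direction is sound and even a bit more self-contained than the paper's: once you arrange $a^{t}=a^{2t}$, the idempotent $e=a^{t}$ is a power of $a$ (hence lies in ${\rm comm}^2(a)$) and $a^{t}-e=0\in\sqrt{J(R)}$, so Lemma \ref{1-1} applies with $n=t$; together with your standard computation that $J(R)$ is nil in a periodic ring, that half closes. (The paper instead quotes Wang--Chen to get pseudo-polarity and then handles units via Theorem \ref{1-3}/Corollary \ref{1-5}; both routes work.)

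The converse, however, has a genuine gap at exactly the step you flag. From pseudo $\pi$-polarity and $J(R)$ nil you correctly get that $a-a^{n+1}$ is nilpotent and that $a$ is Drazin invertible, but no ``direct computation in the commutative subring generated by $a$ and its Drazin inverse'' can produce $a^{s}=a^{r}$: the implication is false element-wise. Take $S=\mathbb{Z}[s]/(s^2)$ and $a=1+s$. Then $a$ is a unit, hence Drazin invertible with $a^{D}=1-s\in{\rm comm}^2(a)$, and $a-a^{2}=-s$ is nilpotent, yet $a^{j}=1+js$ are pairwise distinct, so $a$ is not periodic. (That $S$ itself is not pseudo $\pi$-polar --- the element $2$ already fails --- is precisely the point: the hypothesis must be exploited for \emph{all} elements of $R$ at once, not just for the fixed $a$.) The same obstruction defeats your first, Peirce-decomposition attempt: a unit $u$ of the corner $eRe$ with $u^{n}-e$ nilpotent need not be periodic, as the same example shows, which is why your expansion of $(e+q)^{\ell}$ ``is still not obviously $e$.'' What the paper does instead is expand $(a-a^{n+1})^{l}=a^{l}(1-a^{n})^{l}=0$ into a relation $a^{l}=a^{l+1}f(a)$ with $f(x)\in\mathbb{Z}[x]$ and then invoke the ring-wide Chacron/Bell criterion \cite[Theorem 2.1]{bell}: a ring in which \emph{every} element satisfies such a relation is periodic. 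That global theorem, whose proof goes through the structure theory of the whole ring rather than the subring generated by one element, is the missing ingredient; without citing it (or reproving it), your converse does not close.
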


\begin{proof}
Assume that $R$ is periodic. Then, $R$ is strongly $\pi$-regular. In view of \cite[Theorem 2.1]{Wang12}, $R$ is pseudo-polar and $J(R)$ is nil. Furthermore, let $u\in U(R)$. Then, there exist integers $k>l\geq1$ such that
$u^{k}=u^{l}$. So, $1-u^{k-l}=0\in \sqrt{J(R)}$. Hence, $R$ is pseudo-$\pi$-polar by exploiting Theorem \ref{1-3}, as claimed.

Conversely, letting $a\in R$, in view of Theorem \ref{1-3} we write $a-a^{n+1}\in \sqrt{J(R)}$ for some integer $n\geq 1$. Since $J(R)$ is nil, we get $\sqrt{J(R)}=R^{\rm nil}$. Thus, $a-a^{n+1}$ in nilpotent, i.e., $$(a-a^{n+1})^l=a^{l}(1-a^n)^l=0$$ for some positive integer $l$. This yields that $a^l=a^{l+1}f(a)$ for some polynomial $f(x)\in \mathbb{Z}[x]$. Finally, by virtue of \cite[Theorem 2.1]{bell}, we conclude that $R$ is a periodic ring, as asserted.
\end{proof}

Recall that an element $a$ of a ring $R$ is said to be \emph{quasi-nilpotent} \cite{Harte} if $1-ax\in U(R)$ for all $x\in {\rm comm}(a)$. The set of all quasi-nipotent elements of $R$ is denoted by $R^{\rm qnil}$. We can infer from \cite[Example 4.2]{Wang12} that $\sqrt{J(R)}$ is properly contained in $R^{\rm qnil}$.

Nevertheless, the following is true.

%However, inspired by the given proof of the last statement, we can also state something like the following.

\begin{proposition}
If $R$ is a pseudo $\pi$-polar ring, then $\sqrt{J(R)}=R^{\rm qnil}$.
\end{proposition}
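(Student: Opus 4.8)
The containment $\sqrt{J(R)}\subseteq R^{\rm qnil}$ always holds (this is noted in the excerpt, citing \cite[Example 4.2]{Wang12}), so the whole content of the proposition is the reverse inclusion $R^{\rm qnil}\subseteq\sqrt{J(R)}$ under the hypothesis that $R$ is pseudo $\pi$-polar. First I would take an arbitrary $a\in R^{\rm qnil}$. Since $R$ is pseudo $\pi$-polar, $a$ is $pns$-Drazin invertible for some $n\geq1$, so by Lemma \ref{1-1} there is an idempotent $e=aa^{\tiny{\textcircled{\qihao D}}}\in{\rm comm}^2(a)$ with $a^n-e\in\sqrt{J(R)}$. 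The strategy is to show that this idempotent $e$ must be $0$, after which $a^n\in\sqrt{J(R)}$, and then Lemma \ref{1-2}(4) gives $a\in\sqrt{J(R)}$.

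Here is how I would force $e=0$. From Theorem \ref{1-3}(4) (or Corollary \ref{1-4}), $a(1-e)\in\sqrt{J(R)}$, and since $e\in{\rm comm}^2(a)$, the element $ae$ satisfies $(ae)(1+a^n-e)=a^n e$ inside the commutative subring generated by $a$ and $e$ — in any case, on the corner $eRe$ the element $ae$ acts invertibly modulo $\sqrt{J(eRe)}$ because $a^n-e\in\sqrt{J(R)}$ means $(ae)^n\equiv e$. Concretely, I would argue: because $a^n-e\in\sqrt{J(R)}$, there is an integer $k$ with $(a^n-e)^k\in J(R)$; expanding and using $ae=ea$ gives $a^{nk}e\equiv e\pmod{J(R)}$, so $a^{nk}e$ is invertible in the corner ring $eRe$ (as $e$ is its identity). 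Hence $ae$ has a left inverse in $eRe$, i.e. there is $x\in eRe\subseteq{\rm comm}(a)$ (since $e\in{\rm comm}^2(a)$, everything in $eRe$ that commutes with $a$... actually I must be careful that $x$ commutes with $a$). To avoid the commutation subtlety I would instead work directly with the $pns$-Drazin inverse itself: $a^{\tiny{\textcircled{\qihao D}}}\in{\rm comm}(a)$ and $a\,a^{\tiny{\textcircled{\qihao D}}}=e$, so $a\cdot(1-a^{\tiny{\textcircled{\qihao D}}})$ has... Let me instead use quasi-nilpotence the clean way: $a\in R^{\rm qnil}$ means $1-ax\in U(R)$ for every $x\in{\rm comm}(a)$. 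Take $x=a^{n-1}a^{\tiny{\textcircled{\qihao D}}}$, which lies in ${\rm comm}(a)$; then $ax=a^n a^{\tiny{\textcircled{\qihao D}}}=a^{n-1}e$, hmm, I actually want $ax=e$. Note $a^n a^{\tiny{\textcircled{\qihao D}}}=a^{n-1}(a\,a^{\tiny{\textcircled{\qihao D}}})=a^{n-1}e$; and since $a^n-e\in\sqrt{J(R)}$ with $e$ idempotent central to $a$, one gets $a^{n-1}e\cdot a=a^n e=e^2+(a^n-e)e=e+(a^n-e)e$, so $a^{n-1}e$ is a ``quasi-inverse'' of $a$ on the corner. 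More simply: let $w=a^{\tiny{\textcircled{\qihao D}}}$; then $aw=e$ and $w\in{\rm comm}(a)$, so $1-aw=1-e\in U(R)$ by quasi-nilpotence; but $1-e$ is an idempotent, and an invertible idempotent equals $1$, forcing $e=0$.

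**Executing the finish.** With $e=0$ we get $a^n=a^n-e\in\sqrt{J(R)}$, and then by Lemma \ref{1-2}(4) — which says $a\in\sqrt{J(R)}$ iff $a^k\in\sqrt{J(R)}$ for some $k>1$ — we conclude $a\in\sqrt{J(R)}$. Since $a\in R^{\rm qnil}$ was arbitrary, $R^{\rm qnil}\subseteq\sqrt{J(R)}$, and combined with the always-true reverse containment we obtain $\sqrt{J(R)}=R^{\rm qnil}$.

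**Main obstacle.** The only real point requiring care is the choice of the witness $x\in{\rm comm}(a)$ to feed into the quasi-nilpotence hypothesis: it must genuinely commute with $a$, which is why using $x=a^{\tiny{\textcircled{\qihao D}}}$ directly (a known element of ${\rm comm}(a)$ with $ax=aa^{\tiny{\textcircled{\qihao D}}}=e$) is cleaner than passing to corner rings where the commutation of a constructed quasi-inverse with $a$ would need separate justification. Once that is arranged, the step ``invertible idempotent $=1$'' and Lemma \ref{1-2}(4) are immediate, so there is no hard computation — the content is entirely in recognizing $1-e\in U(R)$ forces $e=0$.
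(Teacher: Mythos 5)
Your final argument is correct, and it reaches the conclusion by a route that differs from the paper's in a meaningful way. The paper never touches the spectral idempotent: it takes $q\in R^{\rm qnil}$, invokes Theorem \ref{1-3} to get $q-q^{n+1}\in\sqrt{J(R)}$, feeds the commuting element $q^{n-1}$ into the definition of quasi-nilpotence to conclude $u=1-q^{n}\in U(R)$, and then writes $q=(qu)u^{-1}$ with $qu=q-q^{n+1}\in\sqrt{J(R)}$, finishing via Lemma \ref{1-2}. You instead feed $x=a^{\tiny{\textcircled{\qihao D}}}\in{\rm comm}^{2}(a)\subseteq{\rm comm}(a)$ into the quasi-nilpotence hypothesis to get $1-e\in U(R)$ where $e=aa^{\tiny{\textcircled{\qihao D}}}$, note that an invertible idempotent equals $1$, so $e=0$, and then $a^{n}=a^{n}-e\in\sqrt{J(R)}$ forces $a\in\sqrt{J(R)}$ by Lemma \ref{1-2}(4). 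Both arguments are sound and of comparable length; yours isolates the conceptually pleasant fact that a quasi-nilpotent $pns$-Drazin invertible element has vanishing spectral idempotent (equivalently, its $pns$-Drazin inverse is $0$), while the paper's is a marginally more economical direct computation that only needs the condition $q-q^{n+1}\in\sqrt{J(R)}$ from Theorem \ref{1-3}. One editorial remark: the middle of your write-up contains several abandoned attempts (the corner-ring $eRe$ detour, the witness $x=a^{n-1}a^{\tiny{\textcircled{\qihao D}}}$) that should be deleted; as you ultimately observe, the witness $x=a^{\tiny{\textcircled{\qihao D}}}$ works at once and sidesteps every commutation subtlety you were worried about.
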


\begin{proof}
It suffices to show that $R^{\rm qnil} \subseteq \sqrt{J(R)}$. To that purpose, let $q\in R^{\rm qnil}$. Then, there is an integer $n\geq 1$ such that $q-q^{n+1}\in\sqrt{J(R)}$. Write $u=1-q^n$. Since $q\in R^{\rm qnil}$, we get $u\in U(R)$ and $uq=qu\in \sqrt{J(R)}$. This implies $q=(qu)u^{-1}\in \sqrt{J(R)}$. Therefore, $\sqrt{J(R)}=R^{\rm
qnil}$, as required.
\end{proof}

%We, however, emphasize that in the case of pseudo $\pi$-polar rings the inclusion $R^{\rm nil}\subseteq \sqrt{J(R)}$ might be proper.

\medskip

Thereby, combining the results quoted above, one verifies that the following relationships are valid:

\medskip

\centerline{strongly $\pi$-regular + pseudo $\pi$-polar $\iff$ periodic $\iff$ strongly $\pi$-regular + $\pi$-UU.}

\medskip

Taking into account the discussion alluded to above, a question which automatically arises is what can be said about pseudo $\pi$-polar $\pi$-UU rings, i.e., are they strongly $\pi$-regular and thus periodic? We note that there is a pseudo $\pi$-polar ring which is {\it not} $\pi$-UU. Indeed, let $R=\mathbb{Z}_{(p)}$ be the localization of the ring of integers $\mathbb{Z}$ at the prime ideal $(p)$, where $p$ is a prime integer. In virtue of \cite[Example
2.10]{Wang12}, $R$ is pseudo-polar. Observe that $R/J(R)\cong \mathbb{Z}_p$. Furthermore, for each $u\in U(\mathbb{Z}_{(p)})$, one has that $\overline{u}^{p-1}=\overline{1}\in R/J(R)$. Thus, $1-u^{p-1}\in J(R)$. Utilizing Theorem \ref{1-3}, $R$ is a pseudo-$\pi$-polar ring, but manifestly it is {\it not} $\pi$-UU.

Specifically, we pose the following which, hopefully, makes some sense:

\begin{conjecture} Pseudo $\pi$-polar $\pi$-UU rings are strongly $\pi$-regular, and hence periodic.
\end{conjecture}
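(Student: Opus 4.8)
The plan is to reduce the conjecture to the single statement that $J(R)$ is nil, and then invoke the machinery already in place. Suppose for a moment this is known. Since $R$ is pseudo $\pi$-polar, every element of $R$ is $pns$-Drazin invertible, hence $p$-Drazin invertible; and once $\sqrt{J(R)}=R^{\rm nil}$ the relation $a-a^{2}x\in\sqrt{J(R)}$ in the definition of a $p$-Drazin inverse becomes $a-a^{2}x\in R^{\rm nil}$, i.e. $x$ is an ordinary Drazin inverse of $a$. Thus every element of $R$ is Drazin invertible, so $R$ is strongly $\pi$-regular; equivalently, $J(R)$ nil together with pseudo $\pi$-polarity already forces $R$ to be periodic by Theorem \ref{2-2}, and periodic rings are strongly $\pi$-regular. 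Being then strongly $\pi$-regular and $\pi$-UU, $R$ is periodic by the chain of equivalences displayed above. Hence it suffices to prove: \emph{every pseudo $\pi$-polar $\pi$-UU ring has nil Jacobson radical}.

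Toward this, observe first that pseudo $\pi$-polarity already yields a weak form of the conclusion: given $u\in U(R)$, write $u^{m}-e\in\sqrt{J(R)}$ with $e^{2}=e\in{\rm comm}^{2}(u)$ (Lemma \ref{1-1}); multiplying $u^{m}-e$ by the idempotent $1-e$, which commutes with $u$ and hence with $u^{m}-e$, one gets $u^{mk}(1-e)\in J(R)$ for some $k$ via Lemma \ref{1-2}; since $u$ is a unit and $J(R)$ is an ideal, $1-e\in J(R)$, forcing $1-e=0$, so $u^{m}-1\in\sqrt{J(R)}$ for some $m\geq 1$. The $\pi$-UU hypothesis improves ``$u^{m}-1\in\sqrt{J(R)}$'' to ``$u^{m}-1\in R^{\rm nil}$'', and the task is to push this back to $\sqrt{J(R)}=R^{\rm nil}$. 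Concretely, fix $j\in J(R)$; then $1+j\in U(R)$ and $\pi$-UU provides $n\geq 1$ with $(1+j)^{n}-1\in R^{\rm nil}$. By the binomial theorem $(1+j)^{n}-1=jw$, where $w=n+\binom{n}{2}j+\cdots+j^{n-1}$ commutes with $j$ and satisfies $w\equiv n\cdot 1_{R}\pmod{J(R)}$; hence $(jw)^{k}=j^{k}w^{k}=0$ for some $k$. If $w\in U(R)$ — equivalently, if $n\cdot 1_{R}\in U(R)$, which holds as soon as every prime divisor of $n$ is invertible in $R$ — then $j^{k}=0$, and we are done.

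The essential remaining case is that some prime $p\mid n$ has $p\cdot 1_{R}\notin U(R)$, and here the full pseudo $\pi$-polar structure must enter. Apply $pns$-Drazin invertibility to the central element $p\cdot 1_{R}$: since ${\rm comm}^{2}(p\cdot 1_{R})$ is the centre of $R$, Lemma \ref{1-1} yields a \emph{central} idempotent $\varepsilon$ and an $m\geq 1$ with $(p\cdot 1_{R})^{m}-\varepsilon\in\sqrt{J(R)}$. In the decomposition $R=\varepsilon R\times(1-\varepsilon)R$ one checks, using Lemma \ref{1-2}, that $p\cdot 1$ is a unit on $\varepsilon R$ (so $w$ and $j$ are harmless there) while $p\cdot 1\in\sqrt{J}$ on $(1-\varepsilon)R$; thus the residue of $p\cdot 1$ is nilpotent modulo the radical of $(1-\varepsilon)R$, and on that factor one is reduced to proving $J$ nil in a pseudo $\pi$-polar $\pi$-UU ring whose residue ring is, heuristically, of characteristic a power of $p$. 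Iterating over the ``bad'' primes and over the pieces of each splitting is the natural continuation.

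The main obstacle — and the reason this stays a conjecture — is that the exponents produced by the $\pi$-UU property are not under our control, so $n\cdot 1_{R}$ need not be invertible modulo $J(R)$, and the per-prime splitting above need not terminate: a priori infinitely many primes can be ``bad''. A genuine proof will presumably require either a \emph{uniform} bound on the relevant $\pi$-UU exponents — perhaps extracted from pseudo $\pi$-polarity by a subdirect-representation argument reducing the problem to indecomposable (say, local) pseudo $\pi$-polar $\pi$-UU rings, where $R/J(R)$ has prime-power characteristic and the residual analysis can be completed directly — or a new idea for forcing $j^{k}w^{k}$ to be nilpotent when $w$ is known only to be congruent to an integer modulo $J(R)$. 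As intermediate targets I would first settle the commutative case and the case of rings with only finitely many bad primes.
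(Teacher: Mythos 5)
This statement is posed in the paper as a \emph{conjecture}; the paper offers no proof, only the closing remark that a positive answer would follow if $J(R)$ were nil. Your proposal is therefore not being measured against an existing argument, and, to your credit, it is honest about not being a proof. Your reductions are correct as far as they go: if $J(R)$ is nil then $\sqrt{J(R)}=R^{\rm nil}$, so pseudo $\pi$-polarity plus Theorem \ref{2-2} gives periodicity (and hence strong $\pi$-regularity) directly; this is exactly the paper's own observation. Your two partial computations are also sound: the argument that every unit $u$ of a pseudo $\pi$-polar ring satisfies $u^{m}-1\in\sqrt{J(R)}$ (forcing the idempotent $e$ to be $1$ because $1-e\in J(R)$), and the binomial argument showing $j\in J(R)$ is nilpotent whenever the $\pi$-UU exponent $n$ for $1+j$ has $n\cdot 1_{R}\in U(R)$, since then $w\equiv n\cdot 1_{R}\pmod{J(R)}$ is a unit and $(jw)^{k}=0$ yields $j^{k}=0$.

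The genuine gap is the one you name yourself: when $n\cdot 1_{R}\notin U(R)$ the factor $w$ need not be invertible, and nothing in the $pns$-Drazin machinery controls which exponents the $\pi$-UU hypothesis hands you. The central-idempotent splitting along a bad prime $p$ is legitimate (the double commutant of a central element is the centre, so Lemma \ref{1-1} does produce a central idempotent), but after the splitting you are back to proving the same statement on the factor where $p\cdot 1$ lies in $\sqrt{J}$, with no measure of progress and possibly infinitely many bad primes; the induction has no base case and does not terminate. So the key assertion --- that a pseudo $\pi$-polar $\pi$-UU ring has nil Jacobson radical --- remains unproved, and the paper itself records that even the weaker question of whether $\pi$-UU rings have nil $J(R)$ is open. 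The proposal should be read as a correct reduction and a partial result, not as a resolution of the conjecture.
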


We just notice that, if  $J(R)$ is nil, this will imply a positive resolution of above problem.

\medskip

We continue our considerations with some matrix results like these in the sequel. We first need the following technicality.

\begin{proposition}\label{corner}
If $R$ is a pseudo-$\pi$-polar ring, then so is the corner subring $eRe$ for any $e^2=e\in R$.
\end{proposition}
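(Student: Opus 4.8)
The plan is to reduce everything to the characterization of pseudo $\pi$-polarity via the element-wise condition from Theorem \ref{1-3}, namely: a ring $S$ is pseudo $\pi$-polar iff for every $s\in S$ there is an integer $n\geq 1$ with $s - s^{n+1}\in\sqrt{J(S)}$ (this follows from the equivalence (1)$\Leftrightarrow$(4) there, together with the fact that pseudo-polarity passes to corners — which is precisely \cite[Theorem 3.2]{Wang12}-style data one may invoke, or may be argued directly). So fix $e^2 = e\in R$, set $S = eRe$, and let $a\in S$ be arbitrary. First I would record the standard facts relating the radical of a corner to the radical of the ambient ring: $J(eRe) = eJ(R)e$, and consequently $\sqrt{J(eRe)} = eRe\cap\sqrt{J(R)}$; more precisely, if $b\in eRe$ and $b^k\in J(R)$ for some $k$, then $b^k = eb^ke\in eJ(R)e = J(eRe)$, so $b\in\sqrt{J(eRe)}$, and conversely. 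This is the routine ring-theoretic lemma I would state and use without belaboring.

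Next, since $R$ is pseudo $\pi$-polar, apply Theorem \ref{1-3} to the element $a$ viewed in $R$: there is $n\geq 1$ with $a - a^{n+1}\in\sqrt{J(R)}$. But $a\in eRe$, so $a = eae$ and all powers $a^k$ lie in $eRe$; hence $a - a^{n+1}\in eRe\cap\sqrt{J(R)} = \sqrt{J(eRe)}$ by the corner-radical lemma. Thus the element-wise condition (4) of Theorem \ref{1-3} holds for $a$ inside the ring $eRe$ — provided one also knows $a$ is $p$-Drazin invertible in $eRe$. For that I would invoke that $p$-Drazin invertibility (equivalently pseudo-polarity, by \cite[Theorem 3.2]{Wang12}) passes to corners: from $R$ pseudo $\pi$-polar, $a$ is $pns$-Drazin invertible in $R$ hence $p$-Drazin invertible in $R$, and the pseudo-polar idempotent $p\in{\rm comm}^2_R(a)$ can be taken inside $eRe$ because $p$ commutes with $e$ (as $e\in{\rm comm}_R(a)$, since $a\in eRe$ forces $ea = a = ae$), so $epe$ is an idempotent in $eRe$ lying in ${\rm comm}^2_{eRe}(a)$ with $a + epe$ a unit of $eRe$ and $a\cdot epe\in\sqrt{J(eRe)}$. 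Hence $a$ is $p$-Drazin invertible in $eRe$, and now Theorem \ref{1-3} (applied in the ring $eRe$) gives that $a$ is $pns$-Drazin invertible there. Since $a\in eRe$ was arbitrary, $eRe$ is pseudo $\pi$-polar.

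The main obstacle — or rather the only point requiring genuine care — is the bookkeeping around $\sqrt{J(eRe)}$ versus $\sqrt{J(R)}$ and around the double commutant ${\rm comm}^2_{eRe}(a)$ versus ${\rm comm}^2_R(a)$: one must verify that the idempotent witnessing pseudo-polarity (or $pns$-invertibility) in $R$ can be chosen to lie in the corner and to double-commute there, not just in $R$. The key observation that unlocks this is that $e$ itself commutes with $a$ (because $a\in eRe$), so $e\in{\rm comm}_R(a)$, whence the double-commutant idempotent $p$ of $a$ in $R$ satisfies $pe = ep$; then $q := epe = pe = ep$ is an idempotent of $eRe$, and for any $x\in eRe$ with $xa = ax$ one checks $x\in{\rm comm}_R(a)$ so $xp = px$, giving $xq = xep = exp\cdot$ — here one unwinds $xe = x = ex$ to get $xq = qx$ — so $q\in{\rm comm}^2_{eRe}(a)$. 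I do not expect any surprises beyond this; once the corner-radical identity and the commutation of $e$ with $a$ are in hand, the result is a direct transport of Theorem \ref{1-3} from $R$ to $eRe$.
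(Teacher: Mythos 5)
Your proof is correct and follows essentially the same route as the paper: reduce to condition (4) of Theorem \ref{1-3}, push $a-a^{n+1}$ into $\sqrt{J(eRe)}$ via $(a-a^{n+1})^k\in J(R)\cap eRe=J(eRe)$, and invoke that pseudo-polarity passes to corners. The only difference is that the paper simply cites \cite[Corollary 2.10]{Wang12} for that last corner step, whereas you prove it directly by observing that the witnessing idempotent $p$ commutes with $e$ and that $epe$ does the job in $eRe$ --- a harmless (and arguably more self-contained) elaboration.
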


\begin{proof}
Let $a\in eRe$. One inspects that Theorem \ref{1-3} allows us to derive that $a-a^{n+1}\in \sqrt{J(R)}$
for some integer $n\geq 1$. So, there exists an integer $k$ such that $(a-a^{n+1})^k\in J(R)$. Observe also that $(a-a^{n+1})^k\in eRe$. Then, $$(a-a^{n+1})^k\in J(R)\cap eRe=J(eRe),$$ that is, $a-a^{n+1} \in\sqrt{J(eRe)}$. Furthermore, as $R$ is pseudo-polar, in accordance with \cite[Corollary 2.10]{Wang12} we conclude that $eRe$ is pseudo-polar too. Thus, the result follows now immediately.
\end{proof}

All local rings are known to be pseudo-polar (see cf. \cite{Wang12}). But, interestingly, there exist local rings which are {\it not} pseudo $\pi$-polar, e.g., such as the rational number field $\mathbb{Q}$ and any field that contains $\mathbb{Q}$. It is easy to see that a local ring $R$ is pseudo $\pi$-polar if, and only if, for each $u\in
U(R)$, $u^n\in 1+ J(R)$ for some $n\geq 1$. For a convenience in the exposition, we call a local ring $R$ \emph{special} if, for every $u\in U(R)$, $u^n\in 1+ J(R)$ for some $n\geq 1$. For example, the considered above $\mathbb{Z}_{(p)}$ is a special local ring.

\medskip

For a ring $R$, the $n\times n$ upper triangular matrix ring over $R$ is denoted by ${\rm T}_n(R)$.

\begin{proposition}
Let $R$ be a commutative local ring. The following are equivalent$:$
\par$(1)$ $R$ is a special local ring.
\par$(2)$ ${\rm T}_n(R)$ is pseudo $\pi$-polar for any integer $n\geq 1$.
\par$(3)$ ${\rm T}_n(R)$ is pseudo $\pi$-polar for some integer $n\geq 1$.
\end{proposition}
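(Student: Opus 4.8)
The plan is to prove the cycle $(1)\Rightarrow(2)\Rightarrow(3)\Rightarrow(1)$, reducing everything via Theorem \ref{1-3} to the statement ``for each $u\in U(\cdot)$ there is $n\ge 1$ with $u^n-1\in\sqrt{J(\cdot)}$''. The implication $(2)\Rightarrow(3)$ is trivial. For $(3)\Rightarrow(1)$, recall that $R$ embeds in ${\rm T}_n(R)$ as the subring of scalar (constant diagonal) matrices; but more usefully, $R\cong e_{11}{\rm T}_n(R)e_{11}$ as a corner subring, where $e_{11}$ is the matrix unit. Since ${\rm T}_n(R)$ is pseudo $\pi$-polar, Proposition \ref{corner} gives that $e_{11}{\rm T}_n(R)e_{11}\cong R$ is pseudo $\pi$-polar, and since $R$ is local this is equivalent to $R$ being special by the remark preceding the proposition. (If one prefers to avoid the corner-subring route for a ring that is not an idempotent corner in the obvious form, one can instead argue directly: a unit $u\in U(R)$ gives the scalar matrix $uI\in U({\rm T}_n(R))$, apply pseudo $\pi$-polarity of ${\rm T}_n(R)$ to $uI$, and read off the top-left entry, using that $J({\rm T}_n(R))$ consists of the strictly-upper-triangular matrices together with matrices whose diagonal entries lie in $J(R)$, so that passing to the $(1,1)$-entry sends $\sqrt{J({\rm T}_n(R))}$ into $\sqrt{J(R)}$.)

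The main work is $(1)\Rightarrow(2)$. Fix $n\ge 1$ and let $A=(a_{ij})\in {\rm T}_n(R)$ with all $a_{ij}\in R$ and $a_{ij}=0$ for $i>j$. By Theorem \ref{1-3} it suffices to show $A$ is $p$-Drazin invertible and $A-A^{n_0+1}\in\sqrt{J({\rm T}_n(R))}$ for some $n_0\ge 1$. For the first part: ${\rm T}_n(R)$ over a commutative local ring $R$ is a semiperfect (indeed, semilocal) ring with nil-modulo-$J$... more precisely, ${\rm T}_n(R)/J({\rm T}_n(R))\cong (R/J(R))^n$ is a product of fields, so ${\rm T}_n(R)$ is semiregular; and since each $a_{ii}$ is either a unit or in $J(R)$ (as $R$ is local), $A$ is either invertible or, after splitting off an idempotent, its image has a nilpotent-plus-unit structure — in fact ${\rm T}_n(R)$ is pseudo-polar by \cite[Corollary 2.10]{Wang12} applied to the fact that ${\rm T}_n(R)$ is a corner-iterated extension, or more directly because ${\rm T}_n(R)$ is a clean semilocal ring with ${\rm T}_n(R)/J$ a finite product of fields hence strongly $\pi$-regular mod $J$, which lifts. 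So I would cite \cite[Corollary 2.10]{Wang12} (or its upper-triangular analogue) to get that ${\rm T}_n(R)$ is pseudo-polar.

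For the condition $A-A^{n_0+1}\in\sqrt{J({\rm T}_n(R))}$: reduce modulo $J({\rm T}_n(R))$. The quotient is $\bar R^{n}$ where $\bar R=R/J(R)$ is a field, so $\bar A$ is diagonal with entries $\bar a_{11},\dots,\bar a_{nn}$, each of which is either $0$ or a unit of $\bar R$. By the \emph{special} hypothesis, for each $i$ with $\bar a_{ii}\ne 0$ there is $m_i\ge 1$ with $a_{ii}^{m_i}-1\in J(R)$, i.e.\ $\bar a_{ii}^{m_i}=\bar 1$; for $i$ with $\bar a_{ii}=0$ we have $\bar a_{ii}=\bar a_{ii}^{k}$ for all $k\ge 1$. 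Taking $n_0$ to be a common multiple of all the $m_i$, we get $\bar a_{ii}-\bar a_{ii}^{n_0+1}=0$ for every $i$, hence $\bar A-\bar A^{n_0+1}=0$ in $\bar R^{n}$, i.e.\ $A-A^{n_0+1}\in J({\rm T}_n(R))\subseteq\sqrt{J({\rm T}_n(R))}$. Combined with pseudo-polarity this gives, via Theorem \ref{1-3}, that $A$ is $pns$-Drazin invertible, so ${\rm T}_n(R)$ is pseudo $\pi$-polar.

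The step I expect to be the genuine obstacle is pinning down $J({\rm T}_n(R))$ and the structure of the quotient cleanly — specifically justifying ``${\rm T}_n(R)/J({\rm T}_n(R))\cong (R/J(R))^n$'' and ``${\rm T}_n(R)$ is pseudo-polar'' — since these are the load-bearing facts and the rest is bookkeeping. The identification of $J({\rm T}_n(R))$ as matrices with strictly-upper-triangular part arbitrary and diagonal in $J(R)$ is standard, and pseudo-polarity of ${\rm T}_n(R)$ should follow from repeated application of \cite[Corollary 2.10]{Wang12} (corner subrings / extensions) to the local ring $R$; I would spell out this reduction carefully rather than wave at it, since without the ``special'' hypothesis ${\rm T}_n(R)$ need not be pseudo $\pi$-polar but it is still always pseudo-polar, and that dichotomy is exactly what makes the equivalence work.
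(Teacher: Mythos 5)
Your proof is correct and follows essentially the same route as the paper: pseudo-polarity of ${\rm T}_n(R)$ from Wang--Chen, reduction via Theorem \ref{1-3} to a condition on the diagonal entries that is settled by taking a common multiple of the exponents furnished by the ``special'' hypothesis, and Proposition \ref{corner} applied to the corner $e_{11}{\rm T}_n(R)e_{11}\cong R$ for $(3)\Rightarrow(1)$. The one citation to fix: pseudo-polarity of ${\rm T}_n(R)$ over a commutative local ring is \cite[Theorem 2.13]{Wang12} (which the paper invokes directly), not Corollary 2.10, which concerns corner rings and would not by itself yield the triangular matrix case.
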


\begin{proof}
$(1)\Rightarrow(2)$. Since $R$ is a commutative local ring, by using \cite[Theorem 2.13]{Wang12} ${\rm T}_n(R)$ is pseudo-polar. One verifies that, in view of Theorem \ref{1-3}, we only need to show that, for every $V=(v_{ij})\in U({\rm T}_n(R))$, there exists integer $k\geq 1$ such that $I-U^k\in J({\rm T}_n(R))$. In fact, $V\in U({\rm T}_n(R))$ if, and only if, $v_{ii} \in U(R)$ for each $i$. As $R$ is a special local ring, we check that $1-v_{ii}^{k_i}\in J(R)$, where $k_i\geq 1$ and $i=1,\ldots,n$. Put $k=k_1k_2\cdots k_n$. Thus,  $$1-v_{ii}^k=(1-v_{ii}^{k_i})c \in
J(R)$$ for some $c\in R$. It follows immediately that $I-U^k\in J({\rm T}_n(R))$, as needed. So, ${\rm T}_n(R)$ is pseudo $\pi$-polar, as promised.

$(2)\Rightarrow(3)$ is clear.

$(3)\Rightarrow(1)$. Taking into account Proposition \ref{corner}, $R$ is pseudo $\pi$-polar, and the rest is quite obvious.
\end{proof}

It can be inferred from \cite{Chen06} that matrix rings over a special local ring need {\it not} be strongly clean. So, in particular, matrix rings over pseudo $\pi$-polar rings are {\it not} always pseudo $\pi$-polar.

It was established in \cite{CD} that if $R$ is a ring such that $a^m=a$ for all $a\in R$ and a fixed integer $m> 1$, then, for any positive integer $n$, the matrix ring ${\rm M}_n(R)$ is periodic. We now expand this to the following affirmation.

\begin{proposition}
Let $R$ be a commutative ring. If $a-a^m\in R^{\rm nil}$ for each $a\in R$ and a fixed integer $m>1$, then ${\rm M}_n(R)$ is pseudo $\pi$-polar for any $n\geq 1$.
\end{proposition}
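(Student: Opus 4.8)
The plan is to reduce the statement to a periodicity-style condition on $\mathrm{M}_n(R)$ that can be handled through the characterization in Theorem~\ref{1-3}: namely, it suffices to show that $\mathrm{M}_n(R)$ is pseudo-polar and that every matrix $A\in \mathrm{M}_n(R)$ satisfies $A-A^{N+1}\in\sqrt{J(\mathrm{M}_n(R))}$ for some integer $N\ge 1$. Since $R$ is commutative and satisfies $a-a^m\in R^{\rm nil}$ for a fixed $m>1$, the ring $R/\mathrm{nil}(R)=:\bar R$ satisfies $\bar a=\bar a^{m}$ for all $\bar a$, i.e. $\bar R$ is commutative and (von Neumann) regular with $\mathrm{nil}(\bar R)=0$; in particular $R$ is strongly $\pi$-regular (the relation $a-a^m\in R^{\rm nil}$ gives $a^{m-1}$ acting like an idempotent modulo nilpotents, and a standard lift produces $a^k\in a^{k+1}R$), hence $J(R)$ is nil by \cite[Theorem~2.1]{Wang12}, so $\sqrt{J(R)}=R^{\rm nil}$. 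Consequently $\mathrm{nil}(R)$ is a nil ideal, $J(\mathrm{M}_n(R))=\mathrm{M}_n(J(R))=\mathrm{M}_n(\mathrm{nil}(R))$ is nil, and $\sqrt{J(\mathrm{M}_n(R))}=\mathrm{M}_n(R)^{\rm nil}$.

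Next I would pass to $\overline{\mathrm{M}_n(R)}=\mathrm{M}_n(R)/\mathrm{M}_n(\mathrm{nil}(R))\cong \mathrm{M}_n(\bar R)$. Here $\bar R$ is a commutative regular ring in which every element satisfies $\bar a^m=\bar a$; by \cite{CD} (the result cited just before this proposition, applied to $\bar R$, which satisfies $a^m=a$ identically) the matrix ring $\mathrm{M}_n(\bar R)$ is periodic, so for $\bar A\in \mathrm{M}_n(\bar R)$ there are integers $s>t\ge 1$ with $\bar A^{s}=\bar A^{t}$. Raising to a suitable power we may assume the exponent gap $s-t$ is independent of $\bar A$ in the sense that $\bar A^{t}(\bar I-\bar A^{\,s-t})=0$; then choosing $N$ a common multiple making $\bar A^{N+1}=\bar A$ is possible precisely when $\bar A$ lies in the "semisimple part'', but in general one only gets $\bar A-\bar A^{\,N+1}$ equal to zero after isolating the nilpotent-modulo-$\mathrm{nil}$ summand. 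The cleanest route is: $\mathrm{M}_n(\bar R)$ periodic $\Rightarrow$ every $\bar A$ is the sum of a potent element (satisfying $\bar x^{k}=\bar x$ for some $k$) and a nilpotent that commute; lifting the Fitting-type decomposition and using that $J(\mathrm{M}_n(R))$ is nil yields, for each $A\in \mathrm{M}_n(R)$, an idempotent $E\in\mathrm{comm}^2(A)$ with $A^{N}-E\in\mathrm{M}_n(R)^{\rm nil}=\sqrt{J(\mathrm{M}_n(R))}$ for a suitable $N$, which is exactly condition (2) of Lemma~\ref{1-1}; pseudo-polarity of $\mathrm{M}_n(R)$ then also follows since any such ring is in particular strongly $\pi$-regular.

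Concretely, I would argue: because $\mathrm{M}_n(\bar R)$ is periodic it is strongly $\pi$-regular, hence so is $\mathrm{M}_n(R)$ (strong $\pi$-regularity lifts modulo a nil ideal), hence $\mathrm{M}_n(R)$ is pseudo-polar by \cite[Theorem~2.1]{Wang12}. It remains to verify $A-A^{N+1}\in \sqrt{J(\mathrm{M}_n(R))}=\mathrm{M}_n(R)^{\rm nil}$ for some $N$. Work in $\mathrm{M}_n(\bar R)$: periodicity gives $\bar A^{s}=\bar A^{t}$ with $s>t\ge1$; set $d=s-t$. Then $\bar A^{t}$ is "$d$-potent'': $\bar A^{t}=\bar A^{t}\cdot \bar A^{td}$ after iterating, and a short computation produces an integer $N$ (a multiple of $d$ exceeding $t$) with $\bar A^{N+1}=\bar A^{N+1}\cdot \text{(something)}$, giving $\bar A^{N}$ idempotent and $\bar A-\bar A^{N+1}$ nilpotent in $\mathrm{M}_n(\bar R)$ — but $\mathrm{nil}(\mathrm{M}_n(\bar R))=0$, so in fact $\bar A=\bar A^{N+1}$ on the potent part while the rest is already $0$. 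Pulling back, $A-A^{N+1}\in \mathrm{M}_n(\mathrm{nil}(R))=\mathrm{M}_n(R)^{\rm nil}$, as required; apply Theorem~\ref{1-3} to conclude each $A$ is $pns$-Drazin invertible for this $N$, i.e. $\mathrm{M}_n(R)$ is pseudo $\pi$-polar.

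The main obstacle is the bookkeeping in the second paragraph: extracting a single exponent $N$ (or at least a relation $A^{N}-E\in\sqrt{J}$ with $E$ idempotent in $\mathrm{comm}^2(A)$) from the mere periodicity of $\mathrm{M}_n(\bar R)$, since periodicity only hands back element-dependent exponents $s_{\bar A}>t_{\bar A}$. The resolution is that for the purposes of Theorem~\ref{1-3} we do not need a uniform $N$ across $\mathrm{M}_n(R)$ — $n$ is allowed to depend on the element — so it is enough, for each fixed $A$, to produce one $N$ with $A-A^{N+1}$ nilpotent; this follows because $\mathrm{M}_n(\bar R)$ periodic forces $\bar A$ potent-plus-nilpotent and $\mathrm{nil}(\mathrm{M}_n(\bar R))=0$ forces the nilpotent part to vanish, so $\bar A$ itself is potent, say $\bar A^{d+1}=\bar A$, whence $\bar A-\bar A^{d+1}=0$ and $A-A^{d+1}\in\mathrm{M}_n(R)^{\rm nil}$. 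The rest is the routine invocation of \cite[Theorem~2.1]{Wang12} for pseudo-polarity and Theorem~\ref{1-3}.
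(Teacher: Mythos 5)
Your overall strategy is sound and is genuinely different from the paper's in its final step, but the paragraph where you ``resolve the main obstacle'' rests on a false claim. You assert that $\mathrm{nil}(\mathrm{M}_n(\bar R))=0$ and conclude that every $\bar A\in\mathrm{M}_n(\bar R)$ is potent. A matrix ring over a reduced ring is very far from reduced: over $\bar R=\mathbb{F}_2$ (which satisfies $\bar a^2=\bar a$) the matrix $\bar A=\left(\begin{smallmatrix}0&1\\0&0\end{smallmatrix}\right)$ is a nonzero nilpotent and is not potent. So ``periodic plus reduced implies potent'' is not available, and the sentence ``$\mathrm{nil}(\mathrm{M}_n(\bar R))=0$ forces the nilpotent part to vanish, so $\bar A$ itself is potent'' is wrong. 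Fortunately the conclusion you need does not require potency: periodicity of $\mathrm{M}_n(\bar R)$ gives $\bar A^{s}=\bar A^{t}$ with $s>t\ge 1$; putting $N=s-t$ and using commutativity of $\bar A$ with $\bar I-\bar A^{N}$, one gets $\bigl(\bar A-\bar A^{N+1}\bigr)^{t}=\bar A^{t}\bigl(\bar I-\bar A^{N}\bigr)^{t}=0$ since $\bar A^{t}\bigl(\bar I-\bar A^{N}\bigr)=\bar A^{t}-\bar A^{s}=0$. Then $A-A^{N+1}$ is nilpotent modulo the nil ideal $\mathrm{M}_n(\mathrm{nil}(R))$, hence nilpotent, and your invocation of Theorem~\ref{1-3}(4) (together with pseudo-polarity of $\mathrm{M}_n(R)$) goes through. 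With this two-line repair the proof is correct; the rest of your reductions ($J(R)=\mathrm{nil}(R)$, $J(\mathrm{M}_n(R))=\mathrm{M}_n(J(R))$ nil, $\sqrt{J(\mathrm{M}_n(R))}=\mathrm{M}_n(R)^{\rm nil}$) are fine for commutative $R$.

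For comparison, the paper takes a different route at exactly this point: rather than verifying $A-A^{N+1}\in\sqrt{J}$ for every matrix $A$, it verifies $I-V^{k}\in J(\mathrm{M}_n(R))$ for every \emph{unit} $V$ (which suffices once pseudo-polarity is known, by Corollary~\ref{1-5}), reducing modulo $J(R)$ to the identity $a=a^{m}$, decomposing $R/J(R)$ as a subdirect product of fields of cardinality at most $m$, and taking the uniform exponent $k=m^{n^{2}}!$. It also gets strong $\pi$-regularity of $\mathrm{M}_n(R)$ from Borooah--Diesl--Dorsey rather than by lifting modulo a nil ideal. Your (repaired) argument leans instead on the quoted periodicity result for $\mathrm{M}_n(\bar R)$ from \cite{CD} and produces only element-dependent exponents, which, as you correctly note, is all that pseudo $\pi$-polarity requires; the paper's route buys a single exponent valid for all units at the cost of the subdirect-product machinery.
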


\begin{proof}
In view of \cite{KosanZ}, $R$ is strongly $\pi$-regular and every element $a$ from $\overline{R}=R/J(R)$ satisfies the identity $\overline{a}^m=\overline{a}$. Since $R$ is a commutative strongly $\pi$-regular ring, it follows from \cite{Bo} that ${\rm M}_n(R)$ is strongly $\pi$-regular. Bearing in mind \cite[Theorem 2.1]{Wang12}, one concludes that ${\rm M}_n(R)$ is pseudo-polar.

Now, we finish the proof by showing that, for every $V\in U(M_n(R))$, the relation $1-V^k \in J({\rm M}_n(R))$ holds for some integer $k$ or, in an equivalent manner, that, for each element
$$\overline{V}\in U(M_n(\overline{R}))=U(M_n(R)/J(M_n(R))),$$ the equality $\overline{V}^k=\overline{1}$ holds for some integer $k$. So, with no harm in generality, we may assume that $J(R)=0$. Hence, $a=a^m$ for all $a\in R$. By the arguments of \cite[p. 197]{Lam}, $R$ is a subdirect product of its left primitive homomorphic images $R_i$ such that each $R_i$ is a field satisfying $a_i^m=a_i$ for all $a_i\in R_i$ and $i\in \Lambda$. Therefore, $|R_i|\leq m$ for all $R_i$, and thus it must be that $|{\rm M}_n(R_i)|\leq m^{n^2}$. Set $k=m^{n^2}!$. Then, for any $U_i\in U({\rm M}_n(R_i))$, we can get that $U_i^{k}=1$. Observing that ${\rm M}_n(R)$ can be viewed as a unitary subring of ${\rm M}_n(\prod_{i\in \Lambda} R_i)\cong \prod_{i\in \Lambda} {\rm M}_n(R_i)$. Let $V=(U_i)_{i\in \Lambda}$ be a unit of $\prod_{i\in \Lambda} {\rm M}_n(R_i)$. Consequently, for each $i$, $U_i$ is a unit of ${\rm M}_n(R_i)$. From the equality $U_i^{k}=1$, we obtain that $V^k=1$, as required.
\end{proof}

\medskip
\medskip

\centerline {\bf ACKNOWLEDGMENTS:} The research of the first-named and third-named authors was supported by the Key Laboratory of Financial Mathematics of Fujian Province University (Putian University) (No. JR202203), Anhui Provincial Natural Science Foundation (No. 2008085MA06) and the Key project of Anhui Education Committee (No. gxyqZD2019009). The research of the second-named author was supported by the projects Junta de Andaluc\'ia under Grant (FQM 264) and T\"UB\'ITAK under Grant (BIDEB 2221).

\medskip
\medskip
\medskip

\end{document}